\DeclareSymbolFont{SY}{U}{psy}{m}{n}
\DeclareMathSymbol{\emptyset}{\mathord}{SY}{'306}
\begin{document}

\newcommand{\ci}[1]{_{ {}_{\scriptstyle #1}}}
\newcommand{\ti}[1]{_{\scriptstyle \text{\rm #1}}}

\newcommand{\norm}[1]{\ensuremath{\|#1\|}}
\newcommand{\abs}[1]{\ensuremath{\vert#1\vert}}
\newcommand{\nm}{\,\rule[-.6ex]{.13em}{2.3ex}\,}

\newcommand{\lnm}{\left\bracevert}
\newcommand{\rnm}{\right\bracevert}

\newcommand{\p}{\ensuremath{\partial}}
\newcommand{\pr}{\mathcal{P}}

\newcounter{vremennyj}

\newcommand\cond[1]{\setcounter{vremennyj}{\theenumi}\setcounter{enumi}{#1}\labelenumi\setcounter{enumi}{\thevremennyj}}

\newcommand{\pbar}{\ensuremath{\bar{\partial}}}
\newcommand{\db}{\overline\partial}
\newcommand{\D}{\mathbb{D}}
\newcommand{\T}{\mathbb{T}}
\newcommand{\C}{\mathbb{C}}
\newcommand{\N}{\mathbb{N}}
\newcommand{\bP}{\mathbb{P}}

\newcommand{\bS}{\mathbf{S}}
\newcommand{\bk}{\mathbf{k}}

\newcommand\cE{\mathcal{E}}
\newcommand\cP{\mathcal{P}}
\newcommand\cC{\mathcal{C}}
\newcommand\CH{\mathcal{H}}
\newcommand\cU{\mathcal{U}}
\newcommand\cQ{\mathcal{Q}}

\newcommand{\be}{\mathbf{e}}

\newcommand{\la}{w}
\newcommand{\e}{\varepsilon}

\newcommand{\td}{\widetilde\Delta}

\newcommand{\tto}{\!\!\to\!}
\newcommand{\wt}{\widetilde}
\newcommand{\shto}{\raisebox{.3ex}{$\scriptscriptstyle\rightarrow$}\!}

\newcommand{\La}{\langle }
\newcommand{\Ra}{\rangle }
\newcommand{\ran}{\operatorname{ran}}
\newcommand{\tr}{\operatorname{tr}}
\newcommand{\codim}{\operatorname{codim}}
\newcommand\clos{\operatorname{clos}}
\newcommand{\spn}{\operatorname{span}}
\newcommand{\lin}{\operatorname{Lin}}
\newcommand{\rank}{\operatorname{rank}}
\newcommand{\re}{\operatorname{Re}}
\newcommand{\vf}{\varphi}
\newcommand{\f}{\varphi}


\newcommand{\entrylabel}[1]{\mbox{#1}\hfill}

\newenvironment{entry}
{\begin{list}{X}%
  {\renewcommand{\makelabel}{\entrylabel}%
      \setlength{\labelwidth}{55pt}%
      \setlength{\leftmargin}{\labelwidth}
      \addtolength{\leftmargin}{\labelsep}%
   }%
}%
{\end{list}}



\numberwithin{equation}{section}

\newtheorem{thm}{Theorem}[section]
\newtheorem{lm}[thm]{Lemma}
\newtheorem{cor}[thm]{Corollary}
\newtheorem{prop}[thm]{Proposition}
\newtheorem{Definition}[thm]{Definition}
\newtheorem{ex}[thm]{Example}
\newtheorem{conj}[thm]{Conjecture}

\theoremstyle{remark}
\newtheorem{rem}[thm]{Remark}
\newtheorem*{rem*}{Remark}

\title[A subclass of the Cowen-Douglas class and similarity]
{A subclass of the Cowen-Douglas class and similarity}

\author{Kui Ji} \author{Hyun-Kyoung Kwon}  \author{Jaydeb Sarkar} \author{Jing Xu}
\email{jikui@hebtu.edu.cn, hkwon6@albany.edu,jay@isibang.ac.in, xujingmath@outlook.com}
\address{Department of Mathematics, Hebei Normal University, Shijiazhuang, Hebei, 050016, China}
\address{Department of Mathematics and Statistics, University at Albany, State University of New York, Albany, NY, 12222, USA}
\address{Statistics and Mathematics Unit, Indian Statistical Institute, Bangalore, 560059, India}
\address{Department of Mathematics, Hebei Normal University, Shijiazhuang, Hebei, 050016, China}
\thanks{
The first author is supported by the National Natural Science
Foundation of China (Grant No. A010602) and the Foundation for the Author of the National Excellent Doctoral Dissertation of China (Grant No. 201116). The third author is supported in part by the NBHM grant NBHM/R.P.64/2014 and the Mathematical Research Impact Centric Support (MATRICS) grant, File No: MTR/2017/000522 and Core Research Grant, File No: CRG/2019/000908, by the Science and Engineering Research Board (SERB), Department of Science $\&$ Technology (DST) of the Government of India.}

\subjclass[2000]{Primary 47C15; Secondary 47B37, 47B48, 47L40}

\keywords{Cowen-Douglas operator, similarity, eigenvector bundle, curvature, holomorphic frame}

\begin{abstract}
We consider a subclass of the Cowen-Douglas class in which the problem of deciding whether two operators are similar becomes more manageable. A similarity criterion for Cowen-Douglas operators is known to be dependent on the trace of the curvature of the corresponding eigenvector bundles. Unless the given eignvector bundle is a line bundle, the computation of the curvature, in general, is not so simple as one might hope. By using a structure theorem on Cowen-Douglas operators, we reduce the problem of finding the trace of the curvature by looking at the curvatures of the associated line bundles. Several questions related to the similarity problem are also taken into account.
\end{abstract} \maketitle
\setcounter{section}{-1}

\section{Introduction}

Given a complex separable Hilbert space $\mathcal{H}$, let ${\mathcal L}({\mathcal H})$ denote the algebra of bounded linear operators on $\mathcal H$. The set of all $n$-dimensional subspaces of ${\mathcal H}$, called the Grassmannian, will be denoted by
$\mbox{Gr}(n,{\mathcal H})$. When
$\text{dim } {\mathcal H}< \infty$, $\mbox{Gr}(n, {\mathcal H})$ is a
complex manifold. Given a connected open subset $\Omega$ of the complex plane $\mathbb{C}$, M. J. Cowen and R. G. Douglas in \cite{CD},
introduced a class of operators whose point spectra contain the set $\Omega$. More specifically, the class of Cowen-Douglas operators of rank $n$, denoted $B_n(\Omega)$, is defined as follows:
$$\begin{array}{lll}{B}_n(\Omega)=\{T\in \mathcal{L}(\mathcal{H}):
&(1)\,\,\Omega\subset \sigma(T):=\{w\in \mathbb{C}:T-w
\mbox{ is not invertible}\},\\
&(2)\,\,\mbox{dim ker}(T-w)=n \text{ for }w\in\Omega, \\
&(3)\,\,\bigvee_{w\in \Omega}\mbox{ker}(T-w)=\mathcal{H},\text{ and}\\
&(4)\,\,\mbox{ran}(T-w)=\mathcal{H}\text{ for }w\in\Omega\}.
\end{array}$$

It is proven in the same paper that for $T \in B_n(\Omega)$, the mapping from $\Omega$ to $\mbox{Gr}(n, {\mathcal H})$ given by $w \rightarrow \mbox{ker}(T-w)$ defines
$$\mathcal{E}_T=\{(w, x) \in \Omega \times \mathcal{H}: x \in \mbox{ker}(T-w)\},$$ a Hermitian holomorphic vector
bundle of rank $n$ over $\Omega$ with projection $\pi (w, x)=w$. A detailed study of certain aspects of complex geometry is also carried out using the concepts given below.

Following the definition of M. J. Cowen and R. G. Douglas, the curvature function $\mathcal{K}$ for a holomorphic bundle $\mathcal{E}$ of rank $n$ is given by
$$\mathcal{K}(w)
=-\frac{\partial}{\partial \overline{w}}\left(h^{-1}\frac{\partial
h}{\partial w}\right),$$ where
$$h(w)=\left (\langle \gamma_j(w),\gamma_i(w)\rangle \right)_{n\times n},$$ for $w \in \Omega$, denotes the Gram matrix associated with a holomorphic frame $\{\gamma_1,\gamma_2,\cdots,\gamma_n\}$ for $\mathcal{E}$. In the special case of a line bundle (a bundle of rank one), the curvature amounts to calculating
\begin{equation} \label{linebundle}
\mathcal{K}(w)=-\frac{\partial^2}{\partial \overline{w} \partial w} \log\|\gamma(w)\|^2,
\end{equation}
where $\gamma$ denotes a non-vanishing holomorphic cross-section of the bundle $\mathcal{E}$.

Given a $C^{\infty}$ bundle map $\phi$ on a holomorphic vector bundle $\mathcal{E}$ and a holomorphic cross-section $\sigma$ of
$\mathcal{E}$, we have

$\begin{array}{llll}
 &(1)\,\,\phi_{\overline{w}}(\sigma)=\frac{\partial}{\partial \overline{w}}\phi(\sigma), \text{ and}\\[4pt]
  &(2)\,\,\phi_{w}(\sigma)=\frac{\partial}{\partial w}\phi(\sigma)+[h^{-1}\frac{\partial}{\partial w}h,\phi(\sigma)].
\end{array}
$

Since the curvature can be regarded as a bundle map, we obtain the
covariant partial derivatives ${\mathcal
K}_{w^i\overline{w}^j}$ of the curvature $\mathcal
{K}$ by repeatedly using the formulas given above. It is also proven in \cite{CD} that the curvature ${\mathcal K}_T$ and the covariant derivatives ${\mathcal K}_{T, w^i\overline{w}^j}$ of the eigenvector bundle $\mathcal{E}_T$ corresponding to $T \in {B}_n(\Omega)$ form a complete set of unitary invariants.

\begin{thm}[\cite{CD}] Let $T$ and $S$ be Cowen-Douglas operators with Hermitian holormorphic eigenvector bundles $\mathcal{E}_T$ and $\mathcal{E}_S$, respectively. Then $T\sim_{u}
S$ if and only if there exist an isometry $V: \mathcal{E}_{T}\rightarrow
\mathcal{E}_{S}$ and a number $m$ dependent on $\mathcal{E}_T$ and $\mathcal{E}_S$ such that
$$V{\mathcal K}_{T,w^i\overline{w}^j}={\mathcal K}_{S,w^i\overline{w}^j}V,$$
for every $0 \leq i,j \leq m-1$.
\end{thm}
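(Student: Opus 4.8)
The plan is to prove the two implications separately, with essentially all of the work in the converse. Suppose first that $T\sim_u S$, say $WTW^*=S$ with $W\in\mathcal{L}(\mathcal{H})$ unitary. Then $W\ker(T-w)=\ker(S-w)$ for every $w\in\Omega$, so $V:=W|_{\mathcal{E}_T}$ is a fiberwise isometric bundle map $\mathcal{E}_T\to\mathcal{E}_S$; since $W$ is a fixed bounded operator it also carries holomorphic frames to holomorphic frames, so $V$ is an isometric isomorphism of Hermitian holomorphic bundles. In a holomorphic frame for $\mathcal{E}_T$ and its $V$-image the Gram matrices therefore coincide, and so every quantity constructed from the metric and its derivatives is intertwined by $V$; in particular $V\mathcal{K}_{T,w^i\overline w^j}=\mathcal{K}_{S,w^i\overline w^j}V$ for all $i,j\ge 0$, a fortiori for $0\le i,j\le m-1$, whatever $m$ turns out to be.

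For the converse, the first step is to reduce the operator statement to a bundle statement: on $\mathcal{E}_T$ the operator $T$ is multiplication by the coordinate $w$ on local holomorphic sections, and condition (3) in the definition of $B_n(\Omega)$ --- that $\bigvee_{w\in\Omega}\ker(T-w)=\mathcal{H}$ --- allows any fiberwise isometric holomorphic bundle isomorphism $\Phi\colon\mathcal{E}_T\to\mathcal{E}_S$ to be extended to a unitary on $\mathcal{H}$ intertwining $T$ and $S$. So it suffices to upgrade the hypothesized $V$, which a priori matches only the pointwise curvature data, to such a $\Phi$. This is carried out locally and then globalized. Fix $w_0\in\Omega$ and an orthonormal basis of the fiber of $\mathcal{E}_T$ over $w_0$. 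The crucial normalization lemma asserts that there is a unique holomorphic frame for $\mathcal{E}_T$ near $w_0$ extending this basis in which the Gram matrix $h_T$ has a prescribed normal form at $w_0$, and that the Taylor coefficients of $h_T$ at $w_0$ in this frame are universal polynomial expressions in $\mathcal{K}_T(w_0)$ and its covariant derivatives $\mathcal{K}_{T,w^i\overline w^j}(w_0)$; moreover, because the fibers are $n$-dimensional, only those derivatives with $0\le i,j\le m-1$, for a suitable $m$ depending on $\mathcal{E}_T$ and $\mathcal{E}_S$, are needed to recover $h_T$ on a whole neighborhood of $w_0$. Applying this to $\mathcal{E}_S$ as well, and using that $V$ sends the chosen orthonormal basis of the fiber of $\mathcal{E}_T$ over $w_0$ to an orthonormal basis of the fiber of $\mathcal{E}_S$ over $w_0$ while matching the relevant curvature data, one finds that $V$ carries the normalized frame of $\mathcal{E}_T$ to a holomorphic frame of $\mathcal{E}_S$ near $w_0$ with the same Gram matrix; hence $V$ is already an isometric holomorphic bundle isomorphism near $w_0$. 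Since $V$ matches the curvature data at \emph{every} point of $\Omega$, a connectedness argument --- the uniqueness clause of the normalization lemma ensuring that these locally defined isomorphisms agree on overlaps --- assembles them into a global isometric holomorphic bundle isomorphism $\Phi\colon\mathcal{E}_T\to\mathcal{E}_S$, and by the reduction above $T\sim_u S$.

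The main obstacle is the normalization lemma itself: constructing the canonical holomorphic frame at $w_0$ and showing that the Taylor coefficients of $h_T$ there are determined, order by order, by finitely many covariant derivatives of the curvature, which is also where the precise size of $m$ is forced. Concretely this amounts to solving recursively the identity $\mathcal{K}=-\overline\partial\bigl(h^{-1}\partial h\bigr)$ together with the formulas defining the $\mathcal{K}_{w^i\overline w^j}$, and checking that the residual freedom in the choice of holomorphic frame is exactly the ambiguity $h\mapsto g^{*}hg$ with $g$ holomorphic and invertible, which is killed once the frame is normalized against a fixed basis of the fiber over $w_0$. The remaining ingredients --- extending an isometric holomorphic bundle isomorphism to a unitary on $\mathcal{H}$, and the connectedness argument --- are comparatively routine.
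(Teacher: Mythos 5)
A preliminary remark: the paper does not prove this theorem at all — it is quoted from Cowen--Douglas \cite{CD} as background — so there is no in-paper argument to compare with, and your outline has to be measured against the original proof. Your easy direction is fine, and your overall architecture (a normalized holomorphic frame at a point, recovery of the metric from curvature data, then the rigidity step that upgrades an isometric equivalence of the Hermitian holomorphic bundles to a unitary intertwiner via the spanning property and sesquianalytic polarization) is indeed the Cowen--Douglas strategy.

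The gap is in your normalization lemma, which is exactly where the content of the theorem lies. It is true that in a normalized frame the Taylor coefficients of $h_T$ at $w_0$ are universal expressions in the covariant derivatives of the curvature at $w_0$ — but this uses the derivatives of \emph{all} orders. Your further claim that ``because the fibers are $n$-dimensional, only those derivatives with $0\le i,j\le m-1$ are needed to recover $h_T$ on a whole neighborhood of $w_0$'' is false: finitely many covariant derivatives at the single point $w_0$ can never determine the metric near $w_0$. Already for a line bundle ($n=1$, $m=1$) the theorem says equality of the curvature \emph{functions} on $\Omega$ suffices, and the proof is that $\partial\overline{\partial}\log(h_T/h_S)=0$ forces $h_T/h_S=|e^{\varphi}|^{2}$ with $\varphi$ holomorphic — an argument that uses the hypothesis on an open set, not at a point; the single value $\mathcal{K}(w_0)$ says essentially nothing about $h$ near $w_0$. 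In the actual Cowen--Douglas proof the finiteness is the hard step: one must show that matching of the covariant derivatives up to an order controlled by the rank, \emph{at every point of} $\Omega$, propagates to matching of all of them (equivalently, to a local equivalence of the bundles), and that argument is long and delicate, not a consequence of finite-dimensionality of the fibers. Your local step — that $V$ (which, note, is a priori only a fiberwise isometry intertwining curvature data, not a holomorphic map) ``carries the normalized frame of $\mathcal{E}_T$ to a holomorphic frame of $\mathcal{E}_S$ near $w_0$ with the same Gram matrix'' — leans on this false pointwise claim; what matching at $w_0$ gives is only agreement of finitely many Taylor coefficients there. So, after repairing the other steps, your outline would prove the weaker statement in which \emph{all} covariant derivatives are intertwined, but not the stated theorem with a finite $m$.
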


As pointed out by M. J. Cowen and R. G. Douglas, characterizing similarity is a much more intricate issue than describing unitary equivalence. How to make use of the curvature to determine when two Cowen-Douglas operators are similar is still not clear and there have been only some partial results. In \cite{Kwon1}, H. Kwon and
S. Treil gave a similarity theorem to decide when a contraction
operator $T$ is similar to $n$ copies of $M^*_z$, the adjoint of the multiplication operator by $z$, on the
Hardy space of the unit disk $\mathbb{D}$. For a contraction operator $T\in {B}_n(\mathbb{D})$,
let $P(w)$ denote the projection onto the fiber $\ker(T-w)$. Then it is proven that
$T\sim{s}\bigoplus\limits^n M^*_z$ if and only if
$$\bigr\Vert\frac{\partial P(w)}{\partial
w}\bigr\Vert^2_{HS}-\frac{n}{(1-|w|^2)^2}\leq\frac{\partial^{2}}{\partial\overline{w}\partial w}\psi(w),$$ for all
$w\in\mathbb{D}$ and for some bounded subharmonic function $\psi$ defined on $\mathbb{D}$. It is also pointed out that for $n=1$, $\bigr\Vert\frac{\partial P(w)}{\partial w}\bigr\Vert_{HS}^2$, the square of the Hilbert-Schmidt norm of $\frac{\partial P(w)}{\partial
w},$ is the negative of the curvature $\mathcal{K}_T$ of the eigenvector bundle $\mathcal{E}_T$.
Subsequently, the result was generalized from the Hardy shift
to some weighted Bergman shift cases by R. G.
Douglas, H. Kwon, and S. Treil in \cite{Kwon2}. Moreover, in \cite{HJK} and \cite{JS}, $\bigr\Vert \frac{\partial P(w)}{\partial w}\bigr\Vert_{HS}^2$ is proven to be the trace of the curvature $\mathcal{K}_{T}$ when $T\in {B}_n(\Omega)$ and $n$ is an arbitrary positive integer.

For any Cowen-Douglas operator $T$ of rank greater than one, the
curvature ${\mathcal K}_T$ and the corresponding  partial derivatives
${\mathcal K}_{T,w^i\overline{w}^j}$ are not easy
to compute. It is, therefore, necessary to reduce
the number of invariants for Cowen-Douglas operators of higher rank to decide on unitary equivalence or similarity. We first mention the following basic structure theorem proved in the book \cite{JW} that will be relevant for our purpose:

\begin{thm}[\cite{JW}]\label{ut}
For $T \in B_n(\Omega)$, there exist operators $T_{0},T_1,\ldots ,T_{n-1} \in B_1(\Omega)$ and bounded linear operators $S_{i,j}$, $0 \leq i < j \leq n-1$, such that
\renewcommand\arraystretch{0.875}
\begin{equation}\label{1.1T}
T=\left(\begin{matrix}T_{0} & S_{0,1} & S_{0,2}&\cdots&S_{0,n-2}&S_{0,n-1}\\
0 &T_{1}&S_{1,2}&\cdots&S_{1,n-2}&S_{1,n-1} \\
0 &0&T_{2}&\cdots&S_{2,n-2}&S_{2,n-1} \\
\vdots&\vdots&\vdots&\ddots&\vdots&\vdots\\
0&0&0&\cdots&T_{n-2}&S_{n-2,n-1}\\
0&0&0&\cdots&0&T_{n-1}
\end{matrix}\right ).
\end{equation}
\end{thm}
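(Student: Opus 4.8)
The plan is to build, starting from a holomorphic frame of $\mathcal{E}_T$, an increasing chain of $T$-invariant subspaces
\[
\{0\}=\mathcal{M}_0\subseteq\mathcal{M}_1\subseteq\cdots\subseteq\mathcal{M}_n=\mathcal{H}
\]
along which $T$ becomes block upper triangular, and then to check that the diagonal blocks all lie in $B_1(\Omega)$. Since $\mathcal{E}_T$ is a holomorphic vector bundle over the planar domain $\Omega$, it admits a global holomorphic frame $\{\gamma_1,\ldots,\gamma_n\}$; set $\mathcal{M}_k=\clos\spn\{\gamma_j(w):1\le j\le k,\ w\in\Omega\}$. Since $T\gamma_j(w)=w\gamma_j(w)\in\mathcal{M}_k$ and $T$ is bounded, each $\mathcal{M}_k$ is $T$-invariant; the chain is increasing, $\mathcal{M}_0=\{0\}$, and $\mathcal{M}_n=\mathcal{H}$ by condition $(3)$ in the definition of $B_n(\Omega)$. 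Taking the orthogonal decomposition $\mathcal{H}=\bigoplus_{k=1}^{n}\mathcal{N}_k$ with $\mathcal{N}_k=\mathcal{M}_k\ominus\mathcal{M}_{k-1}$, the matrix of $T$ is upper triangular, since the strictly lower blocks vanish because $T\mathcal{M}_k\subseteq\mathcal{M}_k$; so it remains only to understand the diagonal entries $T_{k-1}:=P_{\mathcal{N}_k}T|_{\mathcal{N}_k}$.

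The heart of the matter is the claim that, for every $w\in\Omega$ and $1\le k\le n$,
\[
\mathcal{M}_k\cap\ker(T-w)=\spn\{\gamma_1(w),\ldots,\gamma_k(w)\},\qquad\text{so }\dim\bigl(\mathcal{M}_k\cap\ker(T-w)\bigr)=k.
\]
Granting this, I would first check that $T|_{\mathcal{M}_k}\in B_k(\Omega)$. Conditions $(1)$, $(2)$, $(3)$ are immediate from the claim and the construction, and for condition $(4)$, that $\ran\bigl((T-w)|_{\mathcal{M}_k}\bigr)=\mathcal{M}_k$, one shows the range is both dense and closed. It is dense because differentiating $(T-\zeta)\gamma_j(\zeta)=0$ gives $(T-\zeta)\gamma_j'(\zeta)=\gamma_j(\zeta)$, whence $\gamma_j(\zeta)=(T-w)\bigl[(\zeta-w)^{-1}\gamma_j(\zeta)\bigr]$ for $\zeta\ne w$ and $\gamma_j(w)=(T-w)\gamma_j'(w)$, with $(\zeta-w)^{-1}\gamma_j(\zeta)$ and $\gamma_j'(w)$ both in $\mathcal{M}_k$, and such vectors span $\mathcal{M}_k$. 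It is closed by a bounded-below argument: $T-w$ is onto $\mathcal{H}$, hence bounded below on $\bigl(\ker(T-w)\bigr)^\perp$, and if a unit vector $v\in\mathcal{M}_k\ominus\ker(T|_{\mathcal{M}_k}-w)$ is decomposed along $\ker(T-w)$ and its orthocomplement in $\mathcal{H}$, then, using that $\ker(T-w)$ is finite dimensional and $\mathcal{M}_k$ is closed, $\|(T-w)v\|$ cannot tend to $0$. Now $\mathcal{N}_k$ is canonically unitarily isomorphic to the quotient Hilbert space $\mathcal{M}_k/\mathcal{M}_{k-1}$, under which $T_{k-1}$ corresponds to the operator $\overline{T}_k$ induced by $T|_{\mathcal{M}_k}$ on that quotient; a diagram chase using condition $(4)$ for $T|_{\mathcal{M}_{k-1}}$ and $T|_{\mathcal{M}_k}$ together with the claim shows that $\ker(\overline{T}_k-w)$ is the one-dimensional span of the image of $\gamma_k(w)$ (nonzero since $\gamma_k(w)\notin\mathcal{M}_{k-1}$, again by the claim), that $\ran(\overline{T}_k-w)=\mathcal{M}_k/\mathcal{M}_{k-1}$, and that the eigenvectors span; hence $\overline{T}_k\in B_1(\Omega)$ and therefore $T_{k-1}\in B_1(\Omega)$. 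This yields the desired form~\eqref{1.1T}. Equivalently one may argue by induction on $n$, splitting off $\mathcal{M}_1$ first and invoking the invariance of $B_{n-1}(\Omega)$ under similarity, which is convenient since the diagonal blocks are compressions rather than restrictions.

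I expect the claim itself to be the main obstacle: one must rule out eigenvectors of $T$ hiding in $\mathcal{M}_k$ beyond $\spn\{\gamma_1(w),\ldots,\gamma_k(w)\}$. The mechanism is that, expanding each $\gamma_j$ in a Taylor series about $w$, the coefficients $\gamma_j^{(m)}(w)$ ($1\le j\le k$, $m\ge 0$) have the same closed linear span as $\{\gamma_j(\zeta):j\le k,\ \zeta\in\Omega\}$, while repeated differentiation of $(T-\zeta)\gamma_j(\zeta)=0$ gives $(T-w)\gamma_j^{(m)}(w)=m\,\gamma_j^{(m-1)}(w)$; thus $T-w$ acts on $\mathcal{M}_k$ by strictly lowering the Taylor order, so a vector in $\mathcal{M}_k$ killed by $T-w$ can only involve the order-zero coefficients $\gamma_1(w),\ldots,\gamma_k(w)$. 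Turning this heuristic into a proof --- in particular handling the closure carefully --- is cleanest in the reproducing-kernel model, in which $T$ is, up to unitary equivalence, the adjoint of the multiplication operator by the coordinate on a Hilbert space $\mathcal{H}_K$ of $\C^n$-valued holomorphic functions with $K(w,w)$ invertible, $\mathcal{M}_k$ is the closed span of the kernel sections $K(\cdot,w)\xi_j(w)$ for suitable holomorphic $\xi_j$ with $\{\xi_j(w)\}_{j\le k}$ linearly independent at each $w$, and the claim follows from the analyticity of $K$ and of the $\xi_j$. Everything else is routine.
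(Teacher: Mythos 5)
The paper itself gives no proof of this statement (it is quoted from \cite{JW}), so your argument must stand on its own, and it does not: the step you yourself single out as ``the heart of the matter'' is not just unproven but false in the stated generality. For an \emph{arbitrary} global holomorphic frame $\{\gamma_1,\dots,\gamma_n\}$ of $\mathcal{E}_T$ it need not be true that $\mathcal{M}_k\cap\ker(T-w)=\operatorname{span}\{\gamma_1(w),\dots,\gamma_k(w)\}$: the closure in the definition of $\mathcal{M}_k$ can swallow far more than the Taylor-order heuristic suggests. Concretely, take $T=M^*_z\oplus M^*_z$ on $H^2(\mathbb{D})\oplus H^2(\mathbb{D})$, which lies in $B_2(\mathbb{D})$, let $k_{\bar w}$ span $\ker(M^*_z-w)$, and pick $\phi$ holomorphic on $\mathbb{D}$ but \emph{not} in the Nevanlinna class $N$ (for instance $\phi(z)=\exp\bigl((1-z)^{-2}\bigr)$). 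Then $\gamma_1(w)=(k_{\bar w},\,\phi(w)k_{\bar w})$, $\gamma_2(w)=(0,\,k_{\bar w})$ is a genuine global holomorphic frame of $\mathcal{E}_T$, yet $\mathcal{M}_1=\mathcal{H}$: if $(f,g)\perp\gamma_1(w)$ for all $w$, then $f(z)+\psi(z)g(z)\equiv 0$ with $\psi(z):=\overline{\phi(\bar z)}\notin N$, and $g\neq 0$ would make $\psi=-f/g$ a quotient of $H^2$ functions, hence in $N$; so $g=0$ and then $f=0$. Consequently $\mathcal{M}_1\cap\ker(T-w)$ is two-dimensional, the flag collapses ($\mathcal{N}_2=\{0\}$), and no diagonal entry in $B_1(\mathbb{D})$ can be extracted; your inductive variant (``split off $\mathcal{M}_1$ first'') fails for the same reason. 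In particular the closing assertion that in the reproducing-kernel model ``the claim follows from the analyticity of $K$ and of the $\xi_j$'' cannot be correct, since everything in this example is analytic.

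The peripheral steps are fine granting the claim: invariance of the $\mathcal{M}_k$, block upper-triangularity, density of $\operatorname{ran}\bigl((T-w)|_{\mathcal{M}_k}\bigr)$ via $\gamma_j(\zeta)=(T-w)\bigl[(\zeta-w)^{-1}\gamma_j(\zeta)\bigr]$, and the closed-range argument using finite-dimensionality of $\ker(T-w)$. What is missing is precisely the real content of the theorem: the construction of a flag $\mathcal{M}_1\subset\cdots\subset\mathcal{M}_n$ of invariant subspaces whose fibres intersect $\ker(T-w)$ in the right dimension at every $w$. This forces one either to \emph{choose} the sections with care---e.g.\ build them inductively so that each new section is adapted to the orthogonal complement of the previously constructed subbundle, in the spirit of the flag structure of \cite{JJKMCR} and \cite{JJKM}, or of the frame $\gamma_0(w)\perp\bigl(\tfrac{\partial}{\partial w}\gamma_0(w)-\gamma_1(w)\bigr)$ used in Section 1 of this paper---or to define the $\mathcal{M}_k$ by vanishing conditions in a kernel model rather than as closed spans of values of an arbitrary trivializing frame. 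As written, your proposal reduces the theorem to an intermediate assertion that is false, so the reduction does not prove it.
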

 In \cite{JJKMCR} and \cite{JJKM}, K. Ji, C. Jiang, D. K. Keshari, and G. Misra introduced a subclass $\mathcal{F}B_n(\Omega)$ of the Cowen-Douglas class $B_n(\Omega)$. The class of operators $\mathcal{F}B_n(\Omega)$ is the collection of all $T\in B_n(\Omega)$ with the upper-triangular matrix form given by (\ref{1.1T}), where $T_{i}S_{i,i+1}=S_{i,i+1}T_{i+1}$ and $S_{i, i+1}\neq 0$ for $0\leq i \leq n-2.$ Note that due to this intertwining property, each of the $2\times 2$ block $\Big (\begin{smallmatrix} T_{i} & S_{i,
i+1}\\ 0 & T_{i+1}\end{smallmatrix}\Big)$ in the decomposition of
the operator $T$ is in $\mathcal FB_2(\Omega)$. Hence, by \cite{DM}, the corresponding
second fundamental form $\theta_{i,i+1}(T)$ of $\mathcal{E}_{T_i}$ in $\mathcal{E}_T$ is given by the
formula
\begin{equation} \label{sf}
\theta_{i,i+1}(T)(z) =  \frac{\mathcal K_{T_i}(z) \,d\bar{z}} {\big
(\frac{\|t_{i+1}(z)\|^2} {\|S_{i,i+1}t_{i+1}(z)\|^2}- \mathcal
K_{T_i}(z)\big )^{1/2}},
\end{equation}
where $t_{i+1}$ denotes a non-vanishing section of $\mathcal{E}_{T_{i+1}}.$
For any $T,\widetilde{T}\in \mathcal{F}B_n(\Omega)$ with $\mathcal{K}_{T_i}=\mathcal{K}_{\widetilde{T}_i}$, we have
    $$\theta_{i,i+1}(T)(z) =\theta_{i,i+1}(\widetilde{T})(z) \Leftrightarrow \frac{\|S_{i,i+1}t_{i+1}(z)\|}{\|t_{i+1}(z)\|}=\frac{\|\widetilde{S}_{i,i+1}\widetilde{t}_{i+1}(z)\|}{\|\widetilde{t}_{i+1}(z)\|},$$
so that one can also use $\frac{\|S_{i,i+1}t_{i+1}(z)\|}{\|t_{i+1}(z)\|}$ in place of the second fundamental form $\theta_{i,i+1}(T)$. A unitary classification of
operators in $\mathcal{F}B_n(\Omega)$ is given as follows in terms of the curvature and the second fundamental forms of the corresponding line bundles:

\begin{thm}[\cite{JJKM}]\label{unitary}
For $T, \widetilde{T}\in \mathcal{F}B_n(\Omega)$,
\begin{equation}\nonumber
 T\sim_{u}\widetilde{T}\Leftrightarrow
 \left\{\begin{array}{lll}
\mathcal{K}_{T_i}=\mathcal{K}_{\widetilde{T}_i}\\
 \theta_{i,i+1}(T)=\theta_{i,i+1}(\widetilde{T})\\
 \frac{\langle S_{i,j}(t_{j}), t_i\rangle}{\|t_{i}\|^2}= \frac{\langle \widetilde{S}_{i,j}(\widetilde{t}_{j}), \widetilde{t}_i\rangle}{\|\widetilde{t}_{i}\|^2}\\
 \end{array}
\right\}.
\end{equation}

\end{thm}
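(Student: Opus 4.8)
The plan is to pass from operators to eigenvector bundles and work with a holomorphic frame adapted to the flag built into \eqref{1.1T}. By the Cowen-Douglas theory, $T\sim_u\widetilde T$ holds exactly when $\mathcal{E}_T$ and $\mathcal{E}_{\widetilde T}$ are equivalent as Hermitian holomorphic bundles, and the three conditions are to be read for a matched choice of non-vanishing holomorphic sections $t_i,\widetilde t_i$ of the line bundles $\mathcal{E}_{T_i},\mathcal{E}_{\widetilde T_i}$. First I would construct, for $T\in\mathcal{F}B_n(\Omega)$ in the form \eqref{1.1T}, the canonical holomorphic frame $\{\gamma_0,\dots,\gamma_{n-1}\}$ of $\mathcal{E}_T$: one takes $\gamma_k$ to have $\mathcal{H}_k$-component $t_k$ and, for $l<k$, $\mathcal{H}_l$-component obtained recursively by solving $(T_l-w)\gamma_k^{(l)}=-\sum_{m>l}S_{l,m}\gamma_k^{(m)}$, after a fixed normalization of the undetermined $t_l$-multiples. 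The recursion closes because $(T_l-w)t_l'=t_l$ and because the defining relation $T_iS_{i,i+1}=S_{i,i+1}T_{i+1}$ forces $S_{i,i+1}t_{i+1}$ to be a holomorphic multiple $\varphi_i t_i$ of $t_i$. In this frame the Gram matrix $h_T=(\langle\gamma_j,\gamma_i\rangle)$ has $(i,i)$-entry $\|t_i\|^2$, so that $\mathcal{K}_{T_i}=-\partial\overline\partial\log h_{ii}$ by \eqref{linebundle}; its superdiagonal reduces to $\|S_{i,i+1}t_{i+1}\|/\|t_{i+1}\|$, equivalently (via \eqref{sf}) to $\theta_{i,i+1}(T)$; and, after the normalization, its higher entries reduce to the numbers $\langle S_{i,j}t_j,t_i\rangle/\|t_i\|^2$ together with line-bundle data. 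This dictionary between Gram entries and the invariants in the statement is the backbone of the argument.

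For the implication $(\Leftarrow)$ I would assemble $U=\bigoplus_i U_i$, where $U_i$ is a unitary with $U_iT_i=\widetilde T_iU_i$; such $U_i$ exist because curvature is a complete unitary invariant on $B_1(\Omega)$ and $\mathcal{K}_{T_i}=\mathcal{K}_{\widetilde T_i}$, and I would normalize so that $U_it_i=\widetilde t_i$, fixing the residual $\T$-freedom in each factor inductively. Then $UTU^*=\widetilde T$ is checked block by block: the diagonal is built in; on the superdiagonal the hypothesis $\theta_{i,i+1}(T)=\theta_{i,i+1}(\widetilde T)$ is, by the equivalence recorded just before the statement, the equality of the ratios $\|S_{i,i+1}t_{i+1}\|/\|t_{i+1}\|$, which with holomorphy pins $\varphi_i$ against $\widetilde\varphi_i$; and the remaining blocks are matched by the third hypothesis on the inner products. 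This direction is essentially bookkeeping on top of the line-bundle calculus of the Introduction.

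The implication $(\Rightarrow)$ is the substantive one. Given a unitary $W$ with $WT=\widetilde TW$, I would first prove that $W$ is block diagonal with respect to the two decompositions. This rests on a rigidity statement: for $T\in\mathcal{F}B_n(\Omega)$ the flag of holomorphic sub-bundles $\mathcal{E}_{T_0}\subset\mathcal{E}_{T|_{\mathcal{H}_0\oplus\mathcal{H}_1}}\subset\cdots\subset\mathcal{E}_T$ is an invariant of $\mathcal{E}_T$ as a Hermitian holomorphic bundle; equivalently, the only $T$-invariant subspaces on which $T$ restricts to a Cowen-Douglas operator of lower rank are the obvious ones $\mathcal{H}_0\oplus\cdots\oplus\mathcal{H}_k$. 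This is exactly where the hypothesis $S_{i,i+1}\neq0$ enters. Granting it, $W$ carries each step of the flag of $\mathcal{E}_T$ onto the corresponding step for $\mathcal{E}_{\widetilde T}$, hence is block upper-triangular, hence --- being unitary --- block diagonal, $W=\bigoplus_i W_i$ with $W_iT_i=\widetilde T_iW_i$. From this $\mathcal{K}_{T_i}=\mathcal{K}_{\widetilde T_i}$ is immediate, and feeding the identities $\widetilde S_{i,j}=W_iS_{i,j}W_j^*$ into the dictionary of the first step yields the second-fundamental-form equalities on the superdiagonal and the inner-product equalities above it, with $\widetilde t_i:=W_it_i$ as the matched sections.

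The hard part is the rigidity used in the third paragraph: showing that the flag built into the $\mathcal{F}B_n$-form is a unitary invariant of $\mathcal{E}_T$, i.e.\ that there are no exotic $T$-invariant subspaces carrying lower-rank Cowen-Douglas restrictions. Heuristically, $S_{i,i+1}\neq0$ makes each consecutive $2\times2$ block a genuinely non-split extension at every point, which ought to rigidify the filtration; turning this into a proof that every holomorphic sub-bundle of $\mathcal{E}_T$ coming from such an invariant subspace must coincide with one of the $\mathcal{E}_{T|_{\mathcal{H}_0\oplus\cdots\oplus\mathcal{H}_k}}$ is the crux, and it is exactly what lets one trade the full system of curvature invariants of Cowen and Douglas for the small line-bundle package in the statement. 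A secondary technical point is to organize the higher-order entries of the Gram matrix so that, after the normalization of the canonical frame, only the numbers $\langle S_{i,j}t_j,t_i\rangle$ remain.
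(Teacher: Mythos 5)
Note first that the paper itself gives no proof of this statement: Theorem 0.4 is quoted from \cite{JJKM}, and the only ingredient of that proof reproduced here is Lemma \ref{dp} (an invertible intertwiner of operators in $\mathcal{F}B_n(\Omega)$ is upper triangular together with its inverse; for a unitary this forces diagonality). That lemma is exactly the ``rigidity of the flag'' on which your forward direction rests, and it is the step you explicitly leave unproved, offering only the heuristic that $S_{i,i+1}\neq 0$ makes each consecutive extension non-split and ``ought to rigidify the filtration.'' Since the theorem is essentially this rigidity plus the extraction of invariants, deferring it defers the substantive content of the result; granted a diagonal unitary $W=\bigoplus W_i$ with $W_iT_i=\widetilde{T}_iW_i$ and $W_iS_{i,j}=\widetilde{S}_{i,j}W_j$, your derivation of the three conditions (with $\widetilde{t}_i:=W_it_i$) is indeed routine, but that granting is the whole point.

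Beyond this, two steps are wrong as written. First, the ``dictionary'': with the canonical frame ($\gamma_0=t_0$, $\gamma_1=t_1+\frac{\partial}{\partial w}t_0$, etc., as in Sections 1--2), the $(1,1)$ Gram entry is $\|t_1\|^2+\|\frac{\partial}{\partial w}t_0\|^2$, not $\|t_1\|^2$, and the superdiagonal entry is $\frac{\partial}{\partial w}\|t_0\|^2$, not the ratio $\|S_{0,1}t_1\|/\|t_1\|$; the second fundamental form only appears after the normalization $t_i=S_{i,i+1}t_{i+1}$ and a determinant computation of the type in Lemma \ref{trace1}, so the Gram matrix cannot be read entry-by-entry against the stated invariants. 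Second, in the converse you ``normalize so that $U_it_i=\widetilde{t}_i$'': a unitary intertwiner of two operators in $B_1(\Omega)$ is unique up to a unimodular constant, so $U_it_i=\phi_i\widetilde{t}_i$ with $|\phi_i|=\|t_i\|/\|\widetilde{t}_i\|$ is forced, and $U_it_i=\widetilde{t}_i$ can be arranged only by rescaling $\widetilde{t}_i$; but the third hypothesis, unlike the curvatures and $\theta_{i,i+1}$, is not invariant under rescaling the sections, so this renormalization must be threaded through the hypotheses rather than dismissed as a $\mathbb{T}$-freedom. Relatedly, for $j>i+1$ the block $S_{i,j}$ is not assumed to intertwine $T_i$ and $T_j$, so $S_{i,j}t_j(w)$ need not be an eigenvector and $\langle S_{i,j}t_j,t_i\rangle/\|t_i\|^2$ records only its component along $t_i(w)$; matching the remaining blocks from this datum requires the inductive frame argument of \cite{JJKM}, not mere bookkeeping.
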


In this paper, we obtain a similarity theorem for operators in $\mathcal{F}B_n(\Omega)$ involving the curvatures of the associated line bundles. We first observe that the homogeneity of an operator $T\in\mathcal{F}B_n(\Omega)$ is connected with the similarity problem, the trace of the curvature $\mathcal{K}_T$ can be written as the sum of the curvature $\mathcal{K}_{T_i}$ of the line bundles $\mathcal{E}_{T_i}.$ Note that since it is shown in \cite{JJKM} that operators in $\mathcal{F}B_n(\Omega)$ are irreducible, such a decomposition is non-trivial. Moreover, the $n$-hypercontractivity assumption on the $T_i$, together with an identity that resembles the conditions given in Theorem \ref{unitary} on the second fundamental forms make possible a similarity description in terms of the $\mathcal{K}_{T_i}$. Further results concerning positive definite kernels and the curvature of the tensor product of holomorphic bundles are also presented.

\section{\sf The Base Case $\mathcal{F}B_2(\Omega)$}
We first consider the class $\mathcal{F}B_2(\Omega)$ that will give us information on how to deal with the general case. Let $\mathcal{F}B_2(\Omega)$ denote the set of all bounded linear
operators $T$  of the form $T=\Big ( \begin{smallmatrix}
 T_0 & S \\
 0 & T_1 \\
\end{smallmatrix}\Big ),$
where the two operators $T_0$ and $T_1$ are in the Cowen-Douglas class $B_1(\Omega)$ and the operator $S$ is a non-zero intertwiner between them, that is, $T_0S=ST_1$. It is obvious that if the operators $T_0$ and $T_1$ are defined on separable complex Hilbert spaces $\mathcal H_0$ and $\mathcal{H}_1$, respectively, then $S$ is a non-zero bounded linear operator from $\mathcal H_1$ to $\mathcal H_0$. The operator $T$ is then defined on the Hilbert space $\mathcal H_0\oplus \mathcal H_1.$ Moreover, an operator in $\mathcal FB_2(\Omega)$ obviously belongs to the Cowen-Douglas class $B_2(\Omega)$.

Let ${\mathcal E}_T $ be a holomorphic eigenvector bundle of $T\in\mathcal{F}B_2(\Omega)$ and as usual, let $\text{Hol}(\Omega)$ denote the space of holomorphic functions on $\Omega$. It can then be shown that there exists a holomorphic frame $\{\gamma_0, \gamma_1\}$ of $\mathcal{E}_T$ such that
$$
\gamma_0(w) \perp \left(\frac{\partial}{\partial w}\gamma_0(w)-\gamma_1(w)\right),
$$
for all $w \in \Omega$.  In fact, given any non-zero cross-sections $t_0$ of $\mathcal{E}_{T_0}$ and $t_1$ of $\mathcal{E}_{T_1}$, one sets
$$\gamma_0(w):=\phi(w)t_0(w),$$ for $\phi \in \text{Hol}(\Omega)$ such that $St_1(w)=\phi(w)t_0(w)$ and
$$
\gamma_1(w):=\frac{\partial}{\partial w}\gamma_0(w)-t_1(w)
$$(see \cite{JJKMCR} for details).

Since we will be working with the curvature $\mathcal{K}_T$ of a vector bundle $\mathcal{E}_T$, we mention a related definition.
\begin{Definition} Given a Hermitian
holomorphic vector bundle $\mathcal{E}$ over $\Omega$ of rank $n$ with $\pi:\mathcal{E}\rightarrow \Omega,$ let
$$\wedge^{r}(\mathcal{E}):=\bigcup\limits_{w\in\Omega}\wedge^{r}(\pi^{-1}(w)),
$$ where
$1\leq
r\leq n$ and for $w \in \Omega$, $\wedge^{r}(\pi^{-1}(w))$ denotes the exterior
power space of the fiber $\pi^{-1}(w)$. The space $\wedge^{r}(\pi^{-1}(\mathcal{E}))$ inherits a holomorphic and Hermitian structure from that of $\mathcal{E}$ which makes it a Hermitian holomorphic vector bundle over $\Omega$. When $r=n$, $\wedge^n(\mathcal{E})$ is called the determinant bundle, denoted ${\text{det } \mathcal{E}}$.

\end{Definition}

 Let $\{\gamma_1,\gamma_2,\cdots,\gamma_n\}$ be
 a holomorphic frame for a vector bundle $\mathcal{E}$ on some open set $U\subset \Omega$. Then the wedge product $\gamma_1\wedge
 \gamma_2\wedge\cdots \wedge \gamma_n$ is a frame for ${\text{det}\mathcal{E}}$ over $U$. If we denote by $h_{\text{det}\mathcal{E}}$ the corresponding Gram matrix, then
$$h_{{\text{det }\mathcal{E}}}={\text{det}}h_{\mathcal{E}}.$$
In particular, given a holomorphic frame $\sigma=\{\gamma\}$ of $\mathcal{E}$ on $\Omega$, a holomorphic frame for
the 1-jet bundle ${\mathcal J}_1(\mathcal{E})$ is given by
$${\mathcal J}_1(\sigma)=\{\gamma,\frac{\partial}{\partial w}\gamma\},$$ and the Gram matrix
$h(w)= \langle \gamma(w),\gamma(w) \rangle$ for $w \in \Omega$ induces the following Gram matrix ${\mathcal
J}_1(h)$ for ${\mathcal J}_1(\mathcal{E})$:
$$\begin{array}{lll}
{\mathcal
J}_1(h)(w)&=&\left(\begin{array}{ccccc}
\langle \gamma(w),\gamma(w) \rangle &
\frac{\partial}{\partial w} \langle \gamma(w),\gamma(w) \rangle \\
\frac{\partial}{\partial \overline{w}}\langle \gamma(w),\gamma(w) \rangle &\frac{\partial^{2}}{\partial\overline{w}\partial w }\langle \gamma(w),\gamma(w) \rangle \end{array}\right)\\
&=&\left(\begin{array}{ccccc}h(w)&
\frac{\partial}{\partial w} h(w)\\ \frac{\partial}{\partial \overline{w}}
h(w)&\frac{\partial^{2}}{\partial\overline{w}\partial w } h(w)
\end{array}\right).\end{array}$$

The relationship between the curvature of the determinant bundle $\mathcal{E}$ and that of the vector bundle $\mathcal{E}$ is well-known (see \cite{CD} and \cite{Demailly}). Recently, D. K. Keshari give an elementary and detailed proof of this relationship in \cite{Dinesh}.
\begin{lm}[\cite{CD},\cite{Demailly},\cite{Dinesh}]\label{Dinesh} Let $\mathcal{E}$ be a Hermitian holomorphic vector bundle over $\Omega$ of rank $n$ with $\pi:\mathcal{E} \rightarrow \Omega$. Then for $w \in \Omega$,
$${\mathcal K}_{\text{det }\mathcal{E}}(w) = \text{trace }{\mathcal
K}_{\mathcal{E}}(w).$$
\end{lm}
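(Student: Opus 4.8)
The plan is to deduce the identity directly from the two curvature formulas in the introduction, together with Jacobi's formula for the logarithmic derivative of a determinant. Since the claim is local in $w$, it suffices to fix a simply connected open set $U \subset \Omega$ on which $\mathcal{E}$ admits a holomorphic frame $\{\gamma_1,\dots,\gamma_n\}$, write $h = h_{\mathcal{E}}$ for the associated Gram matrix, and establish the equality pointwise on $U$; its validity on all of $\Omega$ then follows. By the definition of the curvature,
$$\mathcal{K}_{\mathcal{E}}(w) = -\frac{\partial}{\partial\overline{w}}\Bigl(h^{-1}\frac{\partial h}{\partial w}\Bigr),$$
and since $\partial/\partial\overline{w}$ and the trace are linear operations that commute,
$$\operatorname{trace}\mathcal{K}_{\mathcal{E}}(w) = -\frac{\partial}{\partial\overline{w}}\operatorname{trace}\Bigl(h^{-1}\frac{\partial h}{\partial w}\Bigr).$$
On the other hand, $\{\gamma_1\wedge\cdots\wedge\gamma_n\}$ is a holomorphic frame for $\det\mathcal{E}$ whose Gram matrix is the scalar $h_{\det\mathcal{E}} = \det h$, so the line-bundle formula \eqref{linebundle} gives
$$\mathcal{K}_{\det\mathcal{E}}(w) = -\frac{\partial^2}{\partial\overline{w}\,\partial w}\log\det h(w).$$

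The computation then reduces to the identity $\frac{\partial}{\partial w}\log\det h = \operatorname{trace}\bigl(h^{-1}\frac{\partial h}{\partial w}\bigr)$, which is Jacobi's formula. One can obtain it by writing $\log\det h = \operatorname{trace}\log h$ (valid since $h$ is positive definite, so that $\log h$ is a well-defined Hermitian matrix depending smoothly on $w$) and differentiating, using the cyclicity of the trace to absorb the fact that $h$ and $\partial h/\partial w$ need not commute. Applying $\partial/\partial\overline{w}$ to both sides of Jacobi's formula and comparing with the displays above yields
$$\mathcal{K}_{\det\mathcal{E}}(w) = -\frac{\partial}{\partial\overline{w}}\operatorname{trace}\Bigl(h^{-1}\frac{\partial h}{\partial w}\Bigr) = \operatorname{trace}\mathcal{K}_{\mathcal{E}}(w),$$
which is the assertion.

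The only place where care is genuinely needed is the justification of Jacobi's formula in this matrix-valued setting, i.e.\ checking that the noncommutativity of $h$ and $\partial h/\partial w$ causes no ordering problem once the trace is taken; everything else is an immediate consequence of the definitions. It is also worth recording, for consistency, that $\operatorname{trace}\mathcal{K}_{\mathcal{E}}$ is independent of the chosen holomorphic frame, since under a holomorphic change of frame the curvature matrix is conjugated and the trace is conjugation invariant; this is in agreement with the left-hand side $\mathcal{K}_{\det\mathcal{E}}$ being intrinsically attached to the bundle $\det\mathcal{E}$.
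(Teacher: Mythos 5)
Your argument is correct and complete: locally a holomorphic frame gives the Gram matrix $h$, the frame $\gamma_1\wedge\cdots\wedge\gamma_n$ of $\det\mathcal{E}$ has metric $\det h$, and Jacobi's formula $\partial\log\det h=\operatorname{trace}\bigl(h^{-1}\partial h\bigr)$ (which indeed needs no commutativity, only smoothness and invertibility of $h$, and is cleanly justified via $\log\det h=\operatorname{trace}\log h$ for positive definite $h$) turns $\operatorname{trace}\mathcal{K}_{\mathcal{E}}=-\bar\partial\operatorname{trace}\bigl(h^{-1}\partial h\bigr)$ into the line-bundle curvature \eqref{linebundle} of $\det\mathcal{E}$. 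Note, however, that the paper itself offers no proof of Lemma \ref{Dinesh}: it is quoted from the reference of Keshari, where such trace formulae are obtained in the context of jet bundles using determinant identities of the type recorded later as Lemma \ref{DIL} (and exploited in the proof of Theorem \ref{tf3}). Your direct computation via Jacobi's formula is the more elementary route and fully suffices for the statement as used here; the determinant-identity machinery of the cited source buys more refined information (relating minors of the Gram matrix of a jet bundle), which is not needed for this lemma. Your closing remark on frame independence of $\operatorname{trace}\mathcal{K}_{\mathcal{E}}$ under conjugation is a correct and worthwhile consistency check.
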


We now investigate situations in which the trace of the curvature $\mathcal{K}_T$ for $T=\Big ( \begin{smallmatrix}
 T_0 & S_{0,1} \\
 0 & T_1 \\
\end{smallmatrix}\Big ) \in \mathcal{F}B_2(\Omega)$ can be computed using the curvatures of the operators $T_0$ and $T_1$. Recall that the curvature of the line bundles $\mathcal{E}_{T_0}$ and $\mathcal{E}_{T_1}$ are easily found using expression (\ref{linebundle}). We start with a simple lemma.

\begin{lm}\label{trace1}For $T=\Big ( \begin{smallmatrix}
 T_0 & S_{0,1} \\
 0 & T_1 \\
\end{smallmatrix}\Big ) \in \mathcal{F}B_2(\Omega)$, let $\{\gamma_0, \gamma_1\}$ be a holomorphic frame of $\mathcal{E}_T$ such that
$$
\gamma_0(w) \perp \left (\frac{\partial}{\partial w}\gamma_0(w)-\gamma_1(w)\right).
$$
Then for every $w \in \Omega$, $$\text{trace }{\mathcal K}_{T}(w)={\mathcal
K}_{T_0}(w)-\frac{\partial^{2}}{\partial\overline{w}\partial w }\log \Big (h_1(w)-{\mathcal K}_{T_0}(w)h_0(w) \Big),$$ where
$h_0(w)=||\gamma_0(w)||^2$ and $h_1(w)=\big\|\frac{\partial}{\partial w}\gamma_0(w)-\gamma_1(w)\big\|^2$.
\end{lm}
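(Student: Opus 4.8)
The plan is to route the computation through the determinant line bundle. By Lemma~\ref{Dinesh}, $\text{trace }{\mathcal K}_T(w)={\mathcal K}_{\text{det }\mathcal{E}_T}(w)$, and since $\gamma_0\wedge\gamma_1$ is a non-vanishing holomorphic frame for the line bundle $\text{det }\mathcal{E}_T$ with $\|(\gamma_0\wedge\gamma_1)(w)\|^2=\det h(w)$, where $h(w)=\big(\langle\gamma_j(w),\gamma_i(w)\rangle\big)$ denotes the Gram matrix of $\{\gamma_0,\gamma_1\}$, formula~(\ref{linebundle}) applied to $\text{det }\mathcal{E}_T$ gives
$$\text{trace }{\mathcal K}_T=-\partial\bar{\partial}\log\det h.$$
Thus everything reduces to evaluating $\det h$ in terms of $h_0$ and $h_1$.

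To compute the entries of $h$, put $\widetilde{\gamma}_1:=\frac{\partial}{\partial w}\gamma_0-\gamma_1$, which is again a holomorphic section since $\gamma_0$ and $\gamma_1$ are; then $\gamma_1=\partial\gamma_0-\widetilde{\gamma}_1$, $h_1=\|\widetilde{\gamma}_1\|^2$, and the hypothesis reads $\gamma_0\perp\widetilde{\gamma}_1$. Using the conventions recorded in the discussion of ${\mathcal J}_1(h)$ above (so that $\partial\langle f,g\rangle=\langle\partial f,g\rangle$ and $\bar{\partial}\langle f,g\rangle=\langle f,\partial g\rangle$ for holomorphic sections $f,g$), the relation $\langle\gamma_0,\widetilde{\gamma}_1\rangle=0$ yields $\langle\gamma_1,\gamma_0\rangle=\langle\partial\gamma_0,\gamma_0\rangle=\partial h_0$ and $\langle\gamma_0,\gamma_1\rangle=\bar{\partial}h_0$. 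For the remaining entry, expand
$$\langle\gamma_1,\gamma_1\rangle=\langle\partial\gamma_0,\partial\gamma_0\rangle-\langle\partial\gamma_0,\widetilde{\gamma}_1\rangle-\langle\widetilde{\gamma}_1,\partial\gamma_0\rangle+\langle\widetilde{\gamma}_1,\widetilde{\gamma}_1\rangle.$$
Here the two cross terms vanish: from $\langle\partial\gamma_0,\gamma_1\rangle=\partial\langle\gamma_0,\gamma_1\rangle=\partial\bar{\partial}h_0$ and $\langle\partial\gamma_0,\partial\gamma_0\rangle=\bar{\partial}\partial h_0$ we obtain $\langle\partial\gamma_0,\widetilde{\gamma}_1\rangle=\bar{\partial}\partial h_0-\partial\bar{\partial}h_0=0$ because mixed partials commute, and similarly $\langle\widetilde{\gamma}_1,\partial\gamma_0\rangle=0$. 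Hence $\langle\gamma_1,\gamma_1\rangle=\partial\bar{\partial}h_0+h_1$ and
$$h=\begin{pmatrix}h_0&\partial h_0\\ \bar{\partial}h_0&\partial\bar{\partial}h_0+h_1\end{pmatrix}.$$

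Expanding the determinant, $\det h=h_0h_1+\big(h_0\,\partial\bar{\partial}h_0-\partial h_0\,\bar{\partial}h_0\big)$. By~(\ref{linebundle}), ${\mathcal K}_{T_0}=-\partial\bar{\partial}\log h_0=-\big(h_0\,\partial\bar{\partial}h_0-\partial h_0\,\bar{\partial}h_0\big)/h_0^2$, so $\det h=h_0h_1-h_0^2{\mathcal K}_{T_0}=h_0\big(h_1-{\mathcal K}_{T_0}h_0\big)$. Since $h$ is positive definite (being the Gram matrix of a frame) and $h_0>0$, the factor $h_1-{\mathcal K}_{T_0}h_0=\det h/h_0$ is positive, so we may take $-\partial\bar{\partial}\log$ and split the logarithm of the product:
$$\text{trace }{\mathcal K}_T=-\partial\bar{\partial}\log h_0-\partial\bar{\partial}\log\big(h_1-{\mathcal K}_{T_0}h_0\big)={\mathcal K}_{T_0}-\partial\bar{\partial}\log\big(h_1-{\mathcal K}_{T_0}h_0\big),$$
which is the asserted identity. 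The only step that really needs attention is the bookkeeping with the differentiation conventions on the Hermitian form together with spotting the cancellation of the two cross terms in $\langle\gamma_1,\gamma_1\rangle$; everything else is a short computation.
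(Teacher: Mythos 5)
Your proof is correct and follows essentially the same route as the paper: both reduce $\text{trace }\mathcal{K}_T$ to $\mathcal{K}_{\text{det}\,\mathcal{E}_T}=-\partial\bar\partial\log\det h$ via Lemma~\ref{Dinesh}, compute the Gram matrix of $\{\gamma_0,\gamma_1\}$ using the orthogonality hypothesis to get $\det h=h_0\big(h_1-\mathcal{K}_{T_0}h_0\big)$, and then split the logarithm. The only difference is that you spell out the vanishing of the cross terms in $\langle\gamma_1,\gamma_1\rangle$, which the paper asserts without comment; the substance is identical.
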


\begin{proof}
Let $h_{\mathcal{E}}$ be the Gram matrix of the frame $\{\gamma_0, \gamma_1\},$ we have
$$ \begin{array}{llll}
h_{\mathcal{E}}(w)
&=&\left(\begin{array}{ccc}
h_0(w)&\frac{\partial}{\partial w} h_0(w)\\
\frac{\partial}{{\partial}{\overline w}} h_0(w)&
\frac{\partial^2}{\partial {\overline w} \partial w} h_0(w)
\end{array}\right)+\left(\begin{array}{ccc}
0&0\\
0&h_1(w)
\end{array}\right),\\
\end{array}$$
where
$h_0(w)=||\gamma_0(w)||^2$ and $h_1(w)=\big\|\frac{\partial}{\partial w}\gamma_0(w)-\gamma_1(w)\big\|^2$.
Then we know from Lemma \ref{Dinesh} that
$$
\text{trace}{\mathcal K}_{T}(w) ={\mathcal K}_{\text{detT}}(w)={\mathcal K}_{T_0}(w)-\frac{\partial^{2}}{\partial\overline{w}\partial w }\log\bigg(h_1(w)-{\mathcal K}_{T_0}(w)h_0(w)\bigg).
$$
\end{proof}

The following proposition is a direct consequence of Lemma \ref{trace1}:

\begin{prop} \label{trace 2}
Let $T=\Big ( \begin{smallmatrix}
 T_0 & S_{0,1} \\
 0 & T_1 \\
\end{smallmatrix}\Big ) \in \mathcal{F}B_2(\Omega)$. Then $\text{trace}{\mathcal K}_{T}={\mathcal
K}_{T_0}+{\mathcal K}_{T_1}$ if and only if there exists some $\phi \in \text{Hol}(\Omega)$ with $|\phi(w)|>1$ for all $w \in \Omega$ such that $${\mathcal K}_{T_0}=\frac{|\phi|^2}{1-|\phi|^2}\theta^2_{0,1}(T).$$

\end{prop}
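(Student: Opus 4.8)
The plan is to unwind the right-hand side of Lemma~\ref{trace1}. Fix nonvanishing holomorphic sections $t_0$ of $\mathcal{E}_{T_0}$ and $t_1$ of $\mathcal{E}_{T_1}$. Since $T_0S_{0,1}=S_{0,1}T_1$, for each $w\in\Omega$ we have $(T_0-w)S_{0,1}t_1(w)=S_{0,1}(T_1-w)t_1(w)=0$, so $S_{0,1}t_1(w)\in\ker(T_0-w)=\mathbb{C}\,t_0(w)$; thus $S_{0,1}t_1$ is a holomorphic section of $\mathcal{E}_T$ lying in the subbundle $\mathcal{E}_{T_0}$, and the frame $\{\gamma_0,\gamma_1\}$ of Lemma~\ref{trace1} may be taken with $\gamma_0=S_{0,1}t_1$ and $\gamma_1=\frac{\partial}{\partial w}\gamma_0-t_1$. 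Consequently
$$h_0=\|\gamma_0\|^2=\|S_{0,1}t_1\|^2,\qquad h_1=\Big\|\tfrac{\partial}{\partial w}\gamma_0-\gamma_1\Big\|^2=\|t_1\|^2,$$
so that $\mathcal{K}_{T_1}=-\partial\bar{\partial}\log h_1$ by \eqref{linebundle}; moreover $h_0,h_1>0$, and $\mathcal{K}_{T_0}<0$ since the curvature of a Cowen--Douglas line bundle is strictly negative \cite{CD}.

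Feeding this into Lemma~\ref{trace1},
$$\text{trace }\mathcal{K}_T-\mathcal{K}_{T_0}-\mathcal{K}_{T_1}=-\partial\bar{\partial}\log\big(h_1-\mathcal{K}_{T_0}h_0\big)+\partial\bar{\partial}\log h_1=-\partial\bar{\partial}\log\frac{h_1-\mathcal{K}_{T_0}h_0}{h_1}.$$
As $\mathcal{K}_{T_0}<0$, the positive function $u:=(h_1-\mathcal{K}_{T_0}h_0)/h_1$ is everywhere $>1$ on $\Omega$. Hence $\text{trace }\mathcal{K}_T=\mathcal{K}_{T_0}+\mathcal{K}_{T_1}$ holds precisely when $\log u$ is harmonic on $\Omega$; and a positive function on $\Omega$ has harmonic logarithm if and only if it is of the form $|\phi|^2$ with $\phi\in\text{Hol}(\Omega)$ zero-free, in which case $u>1$ forces $|\phi|>1$. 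So the trace identity is equivalent to the existence of $\phi\in\text{Hol}(\Omega)$ with $|\phi|>1$ and $(h_1-\mathcal{K}_{T_0}h_0)/h_1=|\phi|^2$, that is, $\mathcal{K}_{T_0}=(1-|\phi|^2)\,h_1/h_0$.

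It remains to recast this last equation in the asserted form. From \eqref{sf} together with the identifications $h_1=\|t_1\|^2$ and $h_0=\|S_{0,1}t_1\|^2$ one gets
$$\theta^2_{0,1}(T)=\frac{\mathcal{K}_{T_0}^2}{\ \tfrac{h_1}{h_0}-\mathcal{K}_{T_0}\ },\qquad\text{hence}\qquad\frac{h_1}{h_0}=\mathcal{K}_{T_0}+\frac{\mathcal{K}_{T_0}^2}{\theta^2_{0,1}(T)}.$$
Since $\mathcal{K}_{T_0}\neq0$ and $\theta^2_{0,1}(T)\neq0$, substituting this into $\mathcal{K}_{T_0}=(1-|\phi|^2)\,h_1/h_0$ and dividing through by $\mathcal{K}_{T_0}$ gives $1=(1-|\phi|^2)\big(1+\mathcal{K}_{T_0}/\theta^2_{0,1}(T)\big)$, equivalently $\mathcal{K}_{T_0}/\theta^2_{0,1}(T)=|\phi|^2/(1-|\phi|^2)$, which is the asserted identity $\mathcal{K}_{T_0}=\frac{|\phi|^2}{1-|\phi|^2}\theta^2_{0,1}(T)$. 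Every step above is an equivalence, so both implications follow. The one point that is not pure bookkeeping on Lemma~\ref{trace1} and \eqref{sf} is the characterization of a positive function with harmonic logarithm as the squared modulus of a holomorphic function: this is immediate when $\Omega$ is simply connected (write the harmonic function as the real part of a holomorphic one), and on a general planar domain one would additionally check that the relevant harmonic conjugate is single-valued.
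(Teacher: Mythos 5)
Your proof is correct and follows essentially the same route as the paper: the same frame $\{S_{0,1}t_1,\ \partial(S_{0,1}t_1)-t_1\}$ (the paper's differs only by a sign), Lemma \ref{trace1}, the observation that the trace identity is equivalent to harmonicity of $\log\big((h_1-\mathcal{K}_{T_0}h_0)/h_1\big)$ together with $\mathcal{K}_{T_0}<0$ forcing $|\phi|>1$, and the same algebra with \eqref{sf}. Your closing caveat about single-valuedness of the harmonic conjugate on a non-simply-connected $\Omega$ is precisely the point the paper passes over silently when it sets $\phi=e^{u+iv}$.
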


\begin{proof} Consider the frame $\{-S_{0,1}t, -\frac{\partial}{\partial w} S_{0,1}t+t\}$ for $\mathcal{E}_T$, where $t$ is a cross-section of $\mathcal{E}_{T_1}$. Let $h_0(w)=\|-S_{0,1}t(w)\|^2$ and $h_1(w)=\|-\frac{\partial}{\partial w} S_{0,1}t+t\|^2$. Then by Lemma \ref{trace1},  we have
$$\text{trace}{\mathcal K}_{T}={\mathcal K}_{T_0}-\frac{\partial^{2}}{\partial\overline{w}\partial w }\log(h_1-{\mathcal K}_{T_0}h_0).$$
If $\text{trace}{\mathcal K}_{T}={\mathcal
K}_{T_0}+{\mathcal K}_{T_1}$, then obviously,
$$\frac{\partial^{2}}{\partial\overline{w}\partial w } \log \left(\frac{h_1-{\mathcal K}_{T_0}h_0}{h_1}\right )^{\frac{1}{2}}=0.$$ Since the function $$u:= \log \left (\frac{h_1-{\mathcal K}_{T_0}h_0}{h_1}\right)^{\frac{1}{2}}$$ is real-valued and harmonic, setting $$\phi:=e^{u+iv} \in \text{Hol}(\Omega),$$ where $v$ is the conjugate harmonic of $u$, it follows that $$|\phi|=e^u=\left(\frac{h_1-{\mathcal K}_{T_0}h_0}{h_1}\right)^{\frac{1}{2}}.$$
 Notice that since ${\mathcal K}_{T_0}(w)<0$ for all $w\in \Omega$, $|\phi(w)|>1$ and  ${\mathcal K}_{T_0}=(1-|\phi|^2)\frac{h_1}{h_0}.$ Then by formula (\ref{sf}),
$$\begin{array}{llll}\theta_{0,1}(T)&=&  \frac{\mathcal K_{T_0}} {\big
(\frac{\|t\|^2}{\|S_{0,1}t\|^2} - \mathcal
K_{T_0}\big )^{1/2}}=\frac{\mathcal K_{T_0}} {\big
(\frac{h_1}{h_0} - \mathcal
K_{T_0}\big )^{1/2}}=\frac{\mathcal K_{T_0}} {\big
(\frac{1}{1-|\phi|^2}{\mathcal K}_{T_0} - \mathcal
K_{T_0}\big )^{1/2}},
\end{array}$$ so that ${\mathcal K}_{T_0}=\frac{|\phi|^2}{1-|\phi|^2}\theta^2_{0,1}(T).$

On the other hand, suppose that  ${\mathcal K}_{T_0}=\frac{|\phi|^2}{1-|\phi|^2}\theta^2_{0,1}(T).$ Then since
$${\mathcal K}_{T_0}=(1-|\phi|^2)\frac{h_1}{h_0},$$ we have
$$\begin{array}{lll}
\text{trace}{\mathcal K}_{T}&=&{\mathcal K}_{T_0}-\frac{\partial^{2}}{\partial\overline{w}\partial w }\log (h_1-{\mathcal K}_{T_0}h_0)\\
&=&{\mathcal K}_{T_0}-\frac{\partial^{2}}{\partial\overline{w}\partial w }\log(|\phi|^2h_1)\\
&=&{\mathcal K}_{T_0}+{\mathcal K}_{T_1}.
\end{array}$$

\end{proof}

The following result characterizes homogeneous operators in $\mathcal{F}B_2(\mathbb{D})$. Recall that a bounded operator $T$ is said to be \emph{homogeneous} if  for all linear fractional transformations $\varphi$ from $\mathbb{D}$ onto $\mathbb{D}$ that are analytic on $\sigma(T)$, $\varphi(T)$ is unitarily equivalent to $T$.

\begin{lm}[\cite{JJKM}]\label{JJKM1}
An operator $T=\Big ( \begin{smallmatrix}
 T_0 & S_{0,1} \\
 0 & T_1 \\
\end{smallmatrix}\Big ) \in \mathcal{F}B_2(\mathbb{D})$ is homogeneous if and only if
\begin{enumerate}
\item[(1)] $T_0$ and $T_1$ are homogeneous operators, \item[(2)]
$\mathcal{K}_{T_1}(w)=\mathcal{K}_{T_0}(w)+\mathcal{K}_{B^*}(w)$
for every $w\in\mathbb{D}$, where $B$ denotes the Bergman shift operator, \text{ and}

\item[(3)] There exist non-vanishing holomorphic cross-sections $t_0$ and $t_1$ for $\mathcal{E}_{T_0}$ and $\mathcal{E}_{T_1}$, respectively, a constant $a > 0$, and an $\alpha \in \mathbb{N}$ such that
 $\|t_0(w)\|^2=\frac{1}{(1-|w|^2)^{\alpha}}$,
 $\|t_1(w)\|^2=\frac{1}{(1-|w|^2)^{\alpha+2}},$
 and $S_{0,1}t_1(w)=at_0(w).$
\end{enumerate}
\end{lm}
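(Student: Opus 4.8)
The plan is to read off homogeneity of $T$ from the unitary classification of $\mathcal{F}B_2(\mathbb{D})$ recalled above, applied to the pair $(T,\varphi(T))$ for each linear fractional automorphism $\varphi$ of $\mathbb{D}$ that is analytic on $\sigma(T)$. The first step is to check that $\varphi(T)$ again lies in $\mathcal{F}B_2(\mathbb{D})$, with diagonal blocks $\varphi(T_0),\varphi(T_1)\in B_1(\mathbb{D})$ and off-diagonal entry $S_{0,1}\varphi'(T_1)$: for a block upper-triangular matrix with intertwining off-diagonal entry ($T_0S_{0,1}=S_{0,1}T_1$) one has $p(T)_{0,1}=S_{0,1}\,p'(T_1)$ for every polynomial $p$, and this passes to the rational function $\varphi$ by the functional calculus, the new intertwiner $S_{0,1}\varphi'(T_1)$ being nonzero since $\varphi'$ is non-vanishing. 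Because curvatures and second fundamental forms are intrinsic to the bundles (independent of the chosen frame), $T$ is homogeneous if and only if, for every such $\varphi$, the first two conditions of the classification theorem hold for $T$ and $\varphi(T)$, namely $\mathcal{K}_{\varphi(T_i)}=\mathcal{K}_{T_i}$ for $i=0,1$ and $\theta_{0,1}(T)=\theta_{0,1}(\varphi(T))$; the third condition turns out to be automatically arrangeable by a holomorphic rescaling of the pulled-back frame, so the whole question hinges on these two.

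For the forward implication I would exploit the Schwarz--Pick identity $|\psi'(w)|\,(1-|w|^2)=1-|\psi(w)|^2$ for a M\"obius $\psi$. Pulling $t_i$ back by $\psi=\varphi^{-1}$ gives $\mathcal{K}_{\varphi(T_i)}(w)=\mathcal{K}_{T_i}(\psi(w))\,|\psi'(w)|^2$, so $\mathcal{K}_{\varphi(T_i)}=\mathcal{K}_{T_i}$ for all $\varphi$ forces $w\mapsto\mathcal{K}_{T_i}(w)(1-|w|^2)^2$ to be M\"obius-invariant, hence constant; thus $\mathcal{K}_{T_i}(w)=-\alpha_i/(1-|w|^2)^2$ for some $\alpha_i>0$, which is exactly the condition characterizing homogeneous operators in $B_1(\mathbb{D})$ --- this is (i). Normalizing the sections so that $\|t_i(w)\|^2=(1-|w|^2)^{-\alpha_i}$ (the harmonic ambiguity in $\log\|t_i\|^2$ being absorbable), formula~\eqref{sf} and the equivalence displayed right after it reduce $\theta_{0,1}(T)=\theta_{0,1}(\varphi(T))$ to the statement that $\|S_{0,1}t_1(w)\|/\|t_1(w)\|$ is a M\"obius-covariant density, which by the same argument forces $\|S_{0,1}t_1(w)\|^2=a^2(1-|w|^2)^2\|t_1(w)\|^2$ for a constant $a>0$. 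Since $w\mapsto S_{0,1}t_1(w)$ is a holomorphic section of the line bundle $\mathcal{E}_{T_0}$, it equals $c(w)\,t_0(w)$ with $c$ holomorphic; comparing moduli gives $|c(w)|^2=a^2(1-|w|^2)^{\,2-\alpha_1+\alpha_0}$, and since $\log|c|^2$ is harmonic this forces $\alpha_1=\alpha_0+2$ and then $|c|\equiv a$, so after absorbing a unimodular constant $S_{0,1}t_1=a\,t_0$. Putting $\alpha:=\alpha_0$ we obtain (iii), and $\mathcal{K}_{T_1}(w)=-(\alpha+2)/(1-|w|^2)^2=\mathcal{K}_{T_0}(w)+\mathcal{K}_{S_2^*}(w)$ because $\mathcal{K}_{S_2^*}(w)=-2/(1-|w|^2)^2$ for the Bergman shift, which is (ii).

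For the converse, assume (i)--(iii) and fix an admissible $\varphi$. The curvature computation above, run in reverse, gives $\mathcal{K}_{\varphi(T_i)}=\mathcal{K}_{T_i}$. Using $\varphi'(T_1)t_1(\lambda)=\varphi'(\lambda)t_1(\lambda)$ together with $S_{0,1}t_1=a\,t_0$, the Schwarz--Pick factor $|\varphi'(\psi(w))|(1-|\psi(w)|^2)=1-|w|^2$ turns $\|(S_{0,1}\varphi'(T_1))(t_1\circ\psi)(w)\|^2/\|(t_1\circ\psi)(w)\|^2$ into $a^2(1-|w|^2)^2$, i.e.\ $\theta_{0,1}(\varphi(T))=\theta_{0,1}(T)$; and rescaling the pulled-back frame of $\varphi(T)$ by nonvanishing holomorphic functions $g_0,g_1$ with $g_0/g_1=\varphi'\circ\psi$ makes $(S_{0,1}\varphi'(T_1))\widetilde t_1=a\,\widetilde t_0$, whence $\langle(S_{0,1}\varphi'(T_1))\widetilde t_1,\widetilde t_0\rangle/\|\widetilde t_0\|^2=a=\langle S_{0,1}t_1,t_0\rangle/\|t_0\|^2$ and the third condition holds too. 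Thus $\varphi(T)\sim_u T$ for every admissible $\varphi$, i.e.\ $T$ is homogeneous.

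The step I expect to be the main obstacle is the frame bookkeeping: first verifying rigorously that $\varphi(T)$ has the asserted upper-triangular form with off-diagonal entry $S_{0,1}\varphi'(T_1)$ (the functional-calculus identity $p(T)_{0,1}=S_{0,1}p'(T_1)$ under the intertwining relation), and then keeping the M\"obius pullback, the factor $\varphi'$, and the holomorphic renormalizations $g_0,g_1$ aligned so that all three quantities in the classification theorem match exactly --- everything ultimately collapses through the single Schwarz--Pick identity, but arranging the normalizations precisely, and extracting the integrality $\alpha\in\mathbb{N}$ from the structure of the homogeneous building blocks, is where the genuine work lies.
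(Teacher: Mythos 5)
This lemma is not proved in the paper at all: it is imported verbatim from \cite{JJKM} (the same source as the unitary classification theorem you invoke), so there is no in-paper argument to compare yours against. Judged on its own terms, your derivation is essentially sound and follows the natural route: the functional-calculus identity $p(T)=\Big(\begin{smallmatrix} p(T_0) & S_{0,1}p'(T_1)\\ 0 & p(T_1)\end{smallmatrix}\Big)$ under $T_0S_{0,1}=S_{0,1}T_1$ is correct and keeps $\varphi(T)$ in $\mathcal{F}B_2(\mathbb{D})$; pulling back frames and using $|\psi'(w)|(1-|\psi(w)|^2)^{-1}=(1-|w|^2)^{-1}$ correctly converts curvature equality into constancy of $(1-|w|^2)^2\mathcal{K}_{T_i}(w)$ and the second-fundamental-form equality into $\|S_{0,1}t_1(w)\|=a(1-|w|^2)\|t_1(w)\|$; and the harmonicity argument pinning down $\alpha_1=\alpha_0+2$ and $|c|\equiv a$ is right. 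Your passing claim that the third condition of the classification theorem is ``automatically arrangeable'' deserves the one-line justification you in effect supply in the converse: with $S_{0,1}t_1=ct_0$ and $\widetilde S_{0,1}\widetilde t_1=\widetilde c\,\widetilde t_0$, equal curvatures and equal ratios force $|\,\widetilde c\,|=|c|$, hence $\widetilde c=\lambda c$ with $|\lambda|=1$, which a unimodular rescaling of $\widetilde t_0$ absorbs.

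Two points remain genuinely open in your write-up. First, the integrality $\alpha\in\mathbb{N}$: your argument only yields $\mathcal{K}_{T_i}(w)=-\alpha_i(1-|w|^2)^{-2}$ with $\alpha_i>0$ real, and indeed no argument can do better, since homogeneous operators in $B_1(\mathbb{D})$ are exactly the adjoint weighted shifts with $\|t(w)\|^2=(1-|w|^2)^{-\lambda}$ for \emph{arbitrary} real $\lambda>0$; the ``$\alpha\in\mathbb{N}$'' in the statement is a convention (or slip) of the present paper rather than something extractable from homogeneity, so you should either prove the lemma with $\alpha>0$ real or flag the discrepancy explicitly rather than list it as remaining work. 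Second, a small technical gap you gloss over: to form $\varphi(T_i)$ you need $\varphi$ analytic on $\sigma(T_i)$, whereas homogeneity only supplies $\varphi$ analytic on $\sigma(T)$, and for an upper-triangular $2\times 2$ operator matrix one only has $\sigma(T)\subseteq\sigma(T_0)\cup\sigma(T_1)$; this is harmless in the intended setting (all spectra are $\overline{\mathbb{D}}$) but should be said. With those two caveats addressed, the proof stands, and it is in the same spirit as the argument in \cite{JJKM} that the paper relies on.
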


Given a homogeneous operator
$T\in\mathcal{F}{B}_{2}(\mathbb{D})$, we can assume by Lemma \ref{JJKM1} that
$$t_0(w)=\frac{1}{(1-zw)^{\alpha}} \text{ and } t_1(w)=\frac{1}{(1-zw)^{\alpha+2}},$$
for some $\alpha \in \mathbb{N}$, and that $T_0$ is the backward shift operator $M^*_z$ on the Hilbert space of analytic functions $f$ on $\mathbb{D}$ such that
$$
\sum_{k=0}^{\infty} |\hat{f}(k)|^2 \frac{1}{{{\alpha+k-1} \choose k}}< \infty.
$$
The operator $T_1$ can also be viewed as $M^*_z$ on a related Hilbert space.
Since a holomorphic frame of $\mathcal{E}_T$ is also given by
$$\begin{array}{llll}
\gamma_0&=&t_0\\
\gamma_{1}&=&\frac{\partial}{\partial w}t_0-\frac{1}{a}t_{1},\\
\end{array}$$
one can even consider a more general operator $T\in
\mathcal{F}{B}_{2}(\mathbb{D})$ whose eigenvector bundle $\mathcal{E}_T$ possesses a holomorphic frame of the form
$$\begin{array}{llll}\gamma_0&=&t_0\\
                     \gamma_{1}&=&\frac{\partial}{\partial w}t_0+\phi t_{1},\\
\end{array}$$
for some
$t_0(w)=\frac{1}{(1-zw)^{\alpha_0}}$ and $t_1(w)=\frac{1}{(1-zw)^{\alpha_1}}$, where
$\alpha_0+2\geq\alpha_1>\alpha_0$, and for some $\phi \in
GL(H^{\infty}(\mathbb{D})).$ $GL(H^{\infty}(\mathbb{D}))$ as usual, stands for the general linear group over the space of bounded analytic functions on $\mathbb{D}$.  These kinds of operators are said to be \emph{quasi-homogeneous}.

We next show that for a homogeneous operator $T$ in $\mathcal{F}B_2(\mathbb{D})$, it becomes a simple matter to find $\text{trace}\mathcal{K}_T$.
\begin{prop} \label{hm1}Let $T=\Big ( \begin{smallmatrix}
 T_0 & S_{0,1} \\
 0 & T_1 \\
\end{smallmatrix}\Big ) \in \mathcal{F}B_2(\mathbb{D})$ be a homogeneous operator. Then
$$\text{trace}{\mathcal K}_{T}={\mathcal K}_{T_0}+{\mathcal K}_{T_1}.$$
\end{prop}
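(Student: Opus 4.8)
The plan is to verify the hypothesis of Proposition \ref{trace 2} using the explicit geometric data that homogeneity makes available. By part (iii) of Lemma \ref{JJKM1}, homogeneity of $T$ furnishes non-vanishing holomorphic sections $t_0$ of $\mathcal{E}_{T_0}$ and $t_1$ of $\mathcal{E}_{T_1}$, a constant $a>0$, and an $\alpha\in\mathbb{N}$, such that
$$\|t_0(w)\|^2=\frac{1}{(1-|w|^2)^{\alpha}},\qquad \|t_1(w)\|^2=\frac{1}{(1-|w|^2)^{\alpha+2}},\qquad S_{0,1}t_1(w)=a\,t_0(w).$$
Hence it is enough to produce $\phi\in\text{Hol}(\mathbb{D})$ with $|\phi(w)|>1$ for all $w$ and ${\mathcal K}_{T_0}=\tfrac{|\phi|^2}{1-|\phi|^2}\,\theta_{0,1}^2(T)$; the crux is that here $\phi$ can be taken to be a constant.

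I would first compute the three quantities involved. From (\ref{linebundle}) and the identity $\partial\bar\partial\log(1-|w|^2)=-(1-|w|^2)^{-2}$ one gets ${\mathcal K}_{T_0}(w)=-\alpha(1-|w|^2)^{-2}$ and, likewise, ${\mathcal K}_{T_1}(w)=-(\alpha+2)(1-|w|^2)^{-2}$. Since $S_{0,1}t_1=a\,t_0$ gives $\|S_{0,1}t_1(w)\|^2=a^2(1-|w|^2)^{-\alpha}$, we have $\|t_1(w)\|^2/\|S_{0,1}t_1(w)\|^2=a^{-2}(1-|w|^2)^{-2}$, and therefore formula (\ref{sf}) yields
$$\theta_{0,1}(T)(w)=\frac{-\alpha\,(1-|w|^2)^{-2}}{\bigl((a^{-2}+\alpha)(1-|w|^2)^{-2}\bigr)^{1/2}}=\frac{-\alpha}{\sqrt{a^{-2}+\alpha}\,\,(1-|w|^2)},$$
so that $\theta_{0,1}^2(T)=\alpha^2\bigl((a^{-2}+\alpha)(1-|w|^2)^2\bigr)^{-1}$. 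Consequently ${\mathcal K}_{T_0}/\theta_{0,1}^2(T)=-(a^{-2}+\alpha)/\alpha$, which is a \emph{negative constant}, independent of $w$; this proportionality is the entire content of the statement.

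It then remains only to solve $\tfrac{|\phi|^2}{1-|\phi|^2}=-(a^{-2}+\alpha)/\alpha$ for a constant $|\phi|$. A short computation gives $|\phi|^2=1+a^2\alpha$, which exceeds $1$ because $a>0$ and $\alpha\ge 1$; so the constant function $\phi\equiv\sqrt{1+a^2\alpha}\in\text{Hol}(\mathbb{D})$ meets all the requirements, and Proposition \ref{trace 2} delivers $\text{trace }{\mathcal K}_T={\mathcal K}_{T_0}+{\mathcal K}_{T_1}$. (Equivalently, one may bypass Proposition \ref{trace 2} and apply Lemma \ref{trace1} to the frame $\{-S_{0,1}t_1,\,-\partial(S_{0,1}t_1)+t_1\}$ of $\mathcal{E}_T$, i.e.\ with $h_0=\|S_{0,1}t_1\|^2$ and $h_1=\|t_1\|^2$: one checks $h_1-{\mathcal K}_{T_0}h_0=(1+a^2\alpha)(1-|w|^2)^{-(\alpha+2)}$, and since $\partial\bar\partial\log$ kills the constant factor, $-\partial\bar\partial\log(h_1-{\mathcal K}_{T_0}h_0)=-(\alpha+2)(1-|w|^2)^{-2}={\mathcal K}_{T_1}$, which is the claim.) I do not expect a genuine obstacle here: the only thing requiring care is sign bookkeeping (all the curvatures are negative and $1-|\phi|^2<0$), which must be tracked to confirm both that the square roots have positive arguments and that $|\phi|>1$.
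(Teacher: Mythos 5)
Your argument is correct, and there is no circularity: Proposition \ref{trace 2} is established independently of Proposition \ref{hm1}, so verifying its hypothesis via Lemma \ref{JJKM1}(iii) is legitimate, and your computations check out ($\mathcal{K}_{T_0}=-\alpha(1-|w|^2)^{-2}$, $\mathcal{K}_{T_1}=-(\alpha+2)(1-|w|^2)^{-2}$, $\mathcal{K}_{T_0}/\theta_{0,1}^2(T)=-(a^{-2}+\alpha)/\alpha$, hence $|\phi|^2=1+a^2\alpha>1$). Your primary route differs from the paper's: the paper does not invoke Proposition \ref{trace 2} at all, but instead writes down the frame $\{a t_0,\, a\,\partial t_0 - t_1\}$, computes the Gram matrix and its determinant explicitly, and applies the determinant-bundle identity (Lemma \ref{Dinesh}) to get $\text{trace}\,\mathcal{K}_T=-\tfrac{2\alpha+2}{(1-|w|^2)^2}$ directly. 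Your approach buys the explicit constant $\phi\equiv\sqrt{1+a^2\alpha}$ as part of the proof itself, which is exactly the content the paper only extracts afterwards in Remark \ref{constant} by combining Propositions \ref{trace 2} and \ref{hm1}; the paper's direct computation, on the other hand, is self-contained and yields the closed-form value of the trace, not just the identity $\text{trace}\,\mathcal{K}_T=\mathcal{K}_{T_0}+\mathcal{K}_{T_1}$. Your parenthetical alternative (Lemma \ref{trace1} applied with $h_0=\|S_{0,1}t_1\|^2$, $h_1=\|t_1\|^2$, giving $h_1-\mathcal{K}_{T_0}h_0=(1+a^2\alpha)(1-|w|^2)^{-(\alpha+2)}$) is essentially the paper's own computation repackaged, since Lemma \ref{trace1} is itself a consequence of Lemma \ref{Dinesh}. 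The only point worth stating explicitly if you keep the main route is that the ratio $\|t_1\|^2/\|S_{0,1}t_1\|^2$ entering formula (\ref{sf}) is independent of the choice of non-vanishing section, so the $\theta_{0,1}(T)$ you compute from Lemma \ref{JJKM1}(iii) is the same quantity appearing in Proposition \ref{trace 2}.
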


\begin{proof}
Since $T$ is homogeneous, there exist constants $a >0$ and $\alpha \in \mathbb{N}$ such that
$$\begin{array}{llll}\gamma_0&=&a\frac{1}{(1-zw)^{\alpha}}\\
                     \gamma_{1}&=&a\frac{\partial}{\partial w}\left(\frac{1}{(1-zw)^{\alpha}} \right)-\frac{1}{(1-zw)^{\alpha+2}},\\
\end{array}$$
form a frame for ${\mathcal
E}_{T}$. Then
 $$h(w)=\left(\begin{array}{ccccc}
 h_0(w)&\frac{\partial}{\partial w} h_0(w)\\
\frac{\partial} {{\partial} \overline{w}} h_0(w)&
\frac{\partial^2}{{\partial}\overline{w} \partial {w}} h_0(w)+h_1(w)\\
\end{array}\right).$$
where $h_i(w)=\|\gamma_i(w)\|^2 (i=1,2)$. Since $\text{trace}{\mathcal K}_{T}(w)={\mathcal K}_{\text{detT}}(w)=-\frac{2\alpha+2}{(1-|w|^2)^2},$ the proof is complete.
\end{proof}

By using the methods similar to the ones used in \cite{Dinesh}, we can generalize Proposition \ref{hm1}  to homogeneous operators that belong to $\mathcal{F}{B}_{3}(\mathbb{D})$. The proof is omitted since we have not been able to generalize the computations involved in this particular case. We infer that the result holds for every $n\in\mathbb{N}$.

\begin{prop}
For $T\in \mathcal{F}{B}_{3}(\mathbb{D})$ that is a homogeneous operator, we have for all $w\in\mathbb{D}$,
$$\text{trace}{\mathcal
K}_{T}(w)={\mathcal K}_{T_0}(w)+{\mathcal K}_{T_1}(w)+{\mathcal
K}_{T_2}(w).$$
\end{prop}

\begin{conj}
Let $T\in \mathcal{F}{B}_{n}(\mathbb{D})$ be a homogeneous operator, then for all $w\in\mathbb{D}$,
$$\text{trace}{\mathcal
K}_{T}(w)={\mathcal K}_{T_0}(w)+{\mathcal K}_{T_1}(w)+\cdots+{\mathcal
K}_{T_{n-1}}(w).$$
\end{conj}

\begin{rem}\label{constant}
By combining Propositions \ref{trace 2} and \ref{hm1}, we see that for a homogeneous operator
$T=\Big ( \begin{smallmatrix}
 T_0 & S_{0,1} \\
 0 & T_1 \\
\end{smallmatrix}\Big ) \in \mathcal{F}B_2(\Omega)$, there exists a $\phi \in \text{Hol}(\Omega)$ with $${\mathcal K}_{T_0}=\frac{|\phi|^2}{1-|\phi|^2}\theta^2_{0,1}(T).$$ In fact, one can take $\phi$ to be the constant function
$$
\phi(w)=\left( 1+\alpha |a|^2 \right)^{\frac{1}{2}}.
$$

\end{rem}

We now show that the condition $$\text{trace}{\mathcal K}_{T}={\mathcal K}_{T_0}+{\mathcal K}_{T_1}$$ can also be used to say something about the similarity of operators in $\mathcal{F}B_2(\mathbb{D})$. The following lemma is well-known, and can be found in \cite{FV}, for instance.

\begin{lm} \label{hm2}
Let $f \in \text{Hol}(\Omega)$ be a function on $\Omega$ taking values in a Hilbert space. If $\|f(w)\|^{2}=1$ for all $w \in \Omega$, then $f$ is a constant function.
\end{lm}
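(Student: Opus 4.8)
The plan is to reduce this to the standard fact that a bounded holomorphic function attaining its maximum modulus at an interior point is constant. Fix a base point $w_0 \in \Omega$ and set $g(w) := \langle f(w), f(w_0) \rangle$, which is a holomorphic function on $\Omega$ since $f$ is holomorphic and $\langle\cdot,\cdot\rangle$ is linear in its first variable. By the Cauchy--Schwarz inequality and the hypothesis $\|f(w)\| = 1$, we get $|g(w)| \le \|f(w)\|\,\|f(w_0)\| = 1$ for all $w \in \Omega$, while $g(w_0) = \|f(w_0)\|^2 = 1$. Hence $g$ attains the maximum of its modulus at the interior point $w_0$, so the maximum modulus principle together with the connectedness of $\Omega$ forces $g \equiv 1$. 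Expanding $\|f(w) - f(w_0)\|^2 = \|f(w)\|^2 - 2\re\langle f(w), f(w_0)\rangle + \|f(w_0)\|^2 = 1 - 2 + 1 = 0$ then shows $f(w) = f(w_0)$ for every $w \in \Omega$, i.e. $f$ is constant.

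Alternatively, and even more quickly, one can differentiate the identity $\|f(w)\|^2 \equiv 1$ twice. Since $\bar{\partial} f = 0$, the product rule gives $\partial \bar{\partial} \|f(w)\|^2 = \|\partial f(w)\|^2$, and as the left side is identically zero we get $\partial f \equiv 0$ on the connected open set $\Omega$; hence $f$ is constant. Either route works and the computations involved are routine.

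The proof presents no real obstacle; the only point deserving a word is that everything above is legitimate for a function with values in an infinite-dimensional Hilbert space. Indeed $\|f\|^2$ is a genuine real-analytic function on $\Omega$, and since the pairing $(\xi, \eta) \mapsto \langle \xi, \eta \rangle$ is bounded and sesquilinear, the Cauchy--Schwarz estimate and the product-rule identities used above hold verbatim (one may also just expand $f$ with respect to an orthonormal basis). The lemma is recorded here because it will be invoked in the similarity discussion that follows, where a holomorphic vector-valued section is shown to have constant norm and must therefore be constant.
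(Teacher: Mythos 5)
Your proposal is correct, and your second argument is essentially the paper's own proof: the paper expands $f$ in an orthonormal basis as $f=\sum_i\phi_i e_i$ and differentiates $\sum_i|\phi_i(w)|^2=1$ to get $\overline{\partial}\partial\|f(w)\|^2=\sum_i|\phi_i'(w)|^2=0$, which is exactly your identity $\partial\overline{\partial}\|f\|^2=\|\partial f\|^2$ written coordinatewise. Your first argument, via the maximum modulus principle applied to $g(w)=\langle f(w),f(w_0)\rangle$ and the expansion $\|f(w)-f(w_0)\|^2=2-2\operatorname{Re}\,g(w)=0$, is a genuinely different and equally valid route; it buys a proof that avoids differentiating the norm (and hence any discussion of real-analyticity of $\|f\|^2$), at the cost of invoking the maximum modulus principle and connectedness of $\Omega$, whereas the paper's computation is a purely local, elementary identity. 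Either way the lemma stands, and your closing remark that the vector-valued setting causes no difficulty matches the paper's implicit use of the basis expansion.
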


\begin{prop} Let $T=\Big ( \begin{smallmatrix}
 T_0 & S_{0,1} \\
 0 & T_1 \\
\end{smallmatrix}\Big ) \in \mathcal{F}B_2(\Omega)$ be a homogeneous operator. If $ \widetilde{T}=\Big ( \begin{smallmatrix}
 T_0 & \widetilde{S}_{0,1} \\
 0 & T_1 \\
\end{smallmatrix}\Big ) \in \mathcal{F}B_2(\Omega)$
 is such that $\text{trace}{\mathcal K}_{\widetilde{T}}={\mathcal K}_{T_0}+{\mathcal K}_{T_1}$,  then
 $T\sim_s \widetilde{T}.$

\end{prop}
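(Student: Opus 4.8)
The plan is to use the curvature identity to force $\widetilde S_{0,1}$ to be a nonzero scalar multiple of $S_{0,1}$, and then to implement the similarity by an explicit diagonal operator. Since $T$ is homogeneous, Lemma~\ref{JJKM1} provides nonvanishing holomorphic cross-sections $t_0$ of $\mathcal{E}_{T_0}$ and $t_1$ of $\mathcal{E}_{T_1}$, a constant $a>0$, and an $\alpha\in\mathbb{N}$ with
\[
\|t_0(w)\|^2=\frac{1}{(1-|w|^2)^\alpha},\qquad
\|t_1(w)\|^2=\frac{1}{(1-|w|^2)^{\alpha+2}},\qquad
S_{0,1}t_1(w)=a\,t_0(w),
\]
whence, by \eqref{linebundle}, $\mathcal{K}_{T_0}(w)=-\alpha(1-|w|^2)^{-2}$ and $\mathcal{K}_{T_1}(w)=-(\alpha+2)(1-|w|^2)^{-2}$. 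Since $\widetilde S_{0,1}t_1(w)$ lies in $\ker(T_0-w)=\mathbb{C}\,t_0(w)$, depends holomorphically on $w$, and $t_0$ is nonvanishing, there is a holomorphic function $g$ on $\mathbb{D}$ with $\widetilde S_{0,1}t_1(w)=g(w)\,t_0(w)$; moreover $g\not\equiv0$, for otherwise $\widetilde S_{0,1}$ would annihilate the total family $\{t_1(w):w\in\mathbb{D}\}$ and hence be $0$, which is impossible since $\widetilde T\in\mathcal{F}B_2(\mathbb{D})$.

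Next I would compute $\operatorname{trace}\mathcal{K}_{\widetilde T}$ in terms of $g$. Differentiating $(T_0-w)t_0(w)=0$ gives $(T_0-w)\tfrac{\partial}{\partial w}t_0(w)=t_0(w)$, from which one checks directly that
\[
\gamma_0(w)=\bigl(t_0(w),0\bigr),\qquad
\gamma_1(w)=\Bigl(-g(w)\tfrac{\partial}{\partial w}t_0(w),\ t_1(w)\Bigr)
\]
is a holomorphic frame of $\mathcal{E}_{\widetilde T}$ on all of $\mathbb{D}$ (taking $\gamma_0=(t_0,0)$ rather than $-\widetilde S_{0,1}t_1$ avoids any difficulty at possible zeros of $g$). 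Using the line-bundle identity $\|t_0\|^2\|\partial t_0\|^2-|\langle\partial t_0,t_0\rangle|^2=-\mathcal{K}_{T_0}\|t_0\|^4$, the Gram matrix $h_{\widetilde T}$ of this frame satisfies $\det h_{\widetilde T}=\|t_0\|^2\bigl(\|t_1\|^2-|g|^2\mathcal{K}_{T_0}\|t_0\|^2\bigr)$, and substituting the homogeneous data turns the second factor into $(1-|w|^2)^{-\alpha-2}\bigl(1+\alpha|g(w)|^2\bigr)$. Hence, by Lemma~\ref{Dinesh},
\[
\operatorname{trace}\mathcal{K}_{\widetilde T}=-\partial\bar{\partial}\log\det h_{\widetilde T}=\mathcal{K}_{T_0}+\mathcal{K}_{T_1}-\partial\bar{\partial}\log\bigl(1+\alpha|g|^2\bigr).
\]
(The same relation also follows from Proposition~\ref{trace 2}: the hypothesis supplies a holomorphic $\widetilde\phi$ with $|\widetilde\phi|>1$, and rewriting $\theta_{0,1}(\widetilde T)$ through \eqref{sf} in terms of $g$ and the homogeneous data gives $|\widetilde\phi|^2=1+\alpha|g|^2$.)

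The hypothesis $\operatorname{trace}\mathcal{K}_{\widetilde T}=\mathcal{K}_{T_0}+\mathcal{K}_{T_1}$ is thus equivalent to $\partial\bar{\partial}\log(1+\alpha|g|^2)\equiv0$, and a direct computation yields
\[
\partial\bar{\partial}\log\bigl(1+\alpha|g|^2\bigr)=\frac{\alpha\,|g'|^2}{\bigl(1+\alpha|g|^2\bigr)^2},
\]
which vanishes identically on the connected set $\mathbb{D}$ exactly when $g'\equiv0$, i.e.\ when $g$ is a constant $c_0$; and $c_0\neq0$ by the above. Then $\widetilde S_{0,1}t_1(w)=c_0\,t_0(w)=\tfrac{c_0}{a}S_{0,1}t_1(w)$ for every $w$, so, since $\{t_1(w):w\in\mathbb{D}\}$ spans $\mathcal{H}_1$, we obtain $\widetilde S_{0,1}=\tfrac{c_0}{a}S_{0,1}$. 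Consequently the bounded invertible operator $X=I_{\mathcal{H}_0}\oplus\tfrac{a}{c_0}I_{\mathcal{H}_1}$ satisfies $XTX^{-1}=\widetilde T$, so $T\sim_s\widetilde T$ (in fact a unitary equivalence when $|c_0|=a$). The step demanding genuine care is the curvature computation in the second paragraph — choosing a frame valid on all of $\mathbb{D}$ and keeping the $\partial\bar{\partial}$-bookkeeping straight — while the crux of the argument is simply the observation that $\partial\bar{\partial}\log(1+\alpha|g|^2)$ vanishes only for constant $g$.
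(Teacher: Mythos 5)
Your argument is correct, and it reaches the same endgame as the paper (a diagonal similarity $I\oplus(\text{scalar})I$ once the connecting holomorphic function is shown to be a nonzero constant), but the route to the constancy is genuinely different. The paper's proof stays inside the second-fundamental-form framework: it invokes Proposition~\ref{trace 2} and Remark~\ref{constant} to produce functions $\phi$, $\widetilde{\phi}$ with $\mathcal{K}_{T_0}=\frac{|\phi|^2}{1-|\phi|^2}\theta^2_{0,1}(T)=\frac{|\widetilde{\phi}|^2}{1-|\widetilde{\phi}|^2}\theta^2_{0,1}(\widetilde{T})$, extracts the relation $c|\psi|^2+|\widetilde{\phi}|^2=1$ for the intertwining function $\psi$ (your $g$, up to normalization), and then uses a meromorphic-versus-antimeromorphic argument to force $\psi$ constant, also concluding along the way that $\widetilde{T}$ is itself homogeneous. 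You instead compute $\operatorname{trace}\mathcal{K}_{\widetilde{T}}$ directly from the Gram determinant of an explicit frame of $\mathcal{E}_{\widetilde{T}}$ built from the homogeneous data of Lemma~\ref{JJKM1}, reducing the hypothesis to $\partial\bar{\partial}\log(1+\alpha|g|^2)=\alpha|g'|^2(1+\alpha|g|^2)^{-2}\equiv 0$, which kills $g'$ immediately. Your version is more elementary and self-contained: it bypasses Proposition~\ref{trace 2}, Remark~\ref{constant} and the homogeneity of $\widetilde{T}$ entirely, and in particular avoids the delicate point in the paper of applying Remark~\ref{constant} to $\widetilde{T}$ before its homogeneity is known; your choice of frame $\gamma_0=(t_0,0)$, $\gamma_1=(-g\,\partial t_0,\,t_1)$ also cleanly handles possible zeros of $g$. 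What the paper's route buys in exchange is the extra structural conclusion that $\widetilde{T}$ is homogeneous and a formulation consistent with the curvature/second-fundamental-form invariants used throughout Section~1, whereas your computation is tied to the explicit weights $(1-|w|^2)^{-\alpha}$, $(1-|w|^2)^{-\alpha-2}$.
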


\begin{proof} Let
$\{t_0,\frac{\partial}{\partial w}t_0+t_1\}$ be a holomorphic frame for $\mathcal{E}_T$ with $S_{0,1}t_1=-t_0$. Notice that $$\widetilde{S}_{0,1}t_1=-\psi t_0,$$ for some $\psi \in \text{Hol}(\Omega)$ and that $\text{trace}{\mathcal K}_{T}=\text{trace}{\mathcal K}_{\widetilde{T}}={\mathcal K}_{T_0}+{\mathcal K}_{T_1}.$ Then by Remark \ref{constant}, there exist constant functions $\phi$ and $\widetilde{\phi}$ on $\Omega$ with $|\phi(w)|^2, |\widetilde{\phi}(w)|^2>1$ such that
$${\mathcal K}_{T_0}=\frac{|\phi|^2}{1-|\phi|^2}\theta^2_{0,1}(T)=\frac{|\widetilde{\phi}|^2}{1-|\widetilde{\phi}|^2}\theta^2_{0,1}(\widetilde{T}).$$
This implies that $(1-|\phi|^2)\frac{h_1}{h_0}=(1-|\widetilde{\phi}|^2)\frac{h_1}{|\psi|^2h_0},$ where as before, $h_i(w)=\|t_i(w)\|^2$. If we set $c=1-|\phi|^2$, then $$c|\psi(w)|^2+|\widetilde{\phi}(w)|^2=1,$$ for all $w\in \mathbb{D}$.
Applying $\frac{\partial}{\partial \overline{w}}$ to both sides, we have $c\psi(w)\frac{\partial}{\partial \overline{w}}\overline{\psi}(w)+\widetilde{\phi}(w)\frac{\partial}{\partial \overline{w}}\overline{\tilde{\phi}}(w)=0.$ Then the  meromorphic function $\frac{c\psi}{\widetilde{\phi}}$ is equal to the anti-meromorphic function $-\frac{\frac{\partial}{\partial \overline{w}}\overline{\widetilde{\phi}}}{\frac{\partial}{\partial \overline{w}}\overline{\psi}},$ so that $\frac{c\psi}{\widetilde{\phi}}$ is a constant. It follows that $\psi$ is also a constant, and by Lemma \ref{JJKM1}, we conclude that $\widetilde{T}$ is homogeneous.

Now define a bundle map $\Phi: \mathcal{E}_{T_1} \rightarrow \mathcal{E}_{T_1}$ as
$$\Phi(t_1(w))=\psi t_1(w),$$ for each $w\in \mathbb{D}.$
Since $\psi\neq 0$ is a constant, the map $\Phi$ induces an invertible operator in the commutant $\{T_1\}^{\prime}$ of $T_1$ and we denote this operator by $X_1$.  Then since
$$S_{0,1}X_1t_1(w)=S_{0,1}(\psi t_1(w))=-\psi t_0(w)=\widetilde{S}_{0,1}t_1(w),$$ for all $w \in \Omega$, $$\widetilde{S}_{0,1}=S_{0,1}X_1.$$  Now setting $X=\Big (\begin{smallmatrix}I & 0 \\
0 & X_1 \\
\end{smallmatrix}\Big ),$
we conclude that $X$ is invertible and that
$$\Big (\begin{smallmatrix}I & 0 \\
0 & X_1 \\
\end{smallmatrix}\Big )\Big ( \begin{smallmatrix}
 T_0 & \widetilde{S}_{0,1} \\
 0 & T_1 \\
\end{smallmatrix}\Big ) =\Big ( \begin{smallmatrix}
 T_0 & S_{0,1} \\
 0 & T_1 \\
\end{smallmatrix}\Big ) \Big (\begin{smallmatrix}I & 0 \\
0 & X_1 \\
\end{smallmatrix}\Big ).$$
\end{proof}

\begin{rem}
The homogeneity of an operator is preserved under a unitary transformation and thus, $\widetilde{T}=\Big(\begin{smallmatrix}
 T_0 & \widetilde{S}_{0,1} \\
 0 & T_1 \\
\end{smallmatrix}\Big)\in\mathcal{F}B_2(\Omega)$ is unitarily equivalent to a homogeneous operator if and only if $\widetilde{T}$ itself is homogeneous.
\end{rem}

We now give several equivalent statements to the condition $\text{trace}\mathcal{K}_{T}=\mathcal{K}_{T_0}+\mathcal{K}_{T_1}.$

\begin{thm}\label{hm3} Let $T=\Big ( \begin{smallmatrix}
 T_0 & S_{0,1} \\
 0 & T_1 \\
\end{smallmatrix}\Big ) \in \mathcal{F}B_2(\mathbb{D})$ and suppose that $f \in \text{Hol}(\mathbb{D})$ takes values in a Hilbert space $\mathcal{H}$. Let $\gamma_0$ and $\gamma_1$ be the non-vanishing holomorphic cross-sections of ${\mathcal E}_{T_{0}}$ and $\mathcal{E}_{T_1}$, respectively, such that $\gamma_0(w) \perp (\frac{\partial}{\partial w}\gamma_0(w)-\gamma_1(w))$. Set $h_{i}(w)=\|\gamma_{i}(w)\|^{2}$ as before and suppose that for all $w \in \mathbb{D}$, one of the following conditions hold:
\begin{enumerate}
\item $-\left(\mathcal{K}_{T_{0}}\frac{h_{0}}{h_{1}}\right)(w)=\|f(w)\|^{2}$, or
\item $-\left ({\mathcal K}_{T_{0}}\frac{h_{0}}{h_{1}}\right)(w)=\|f(w)\|^{-2}$ and $\lim\limits_{|w|\rightarrow1^{-}}\|f(w)\|^{2}=\infty.$
\end{enumerate}
Then  $\text{trace}{\mathcal K}_{T}={\mathcal K}_{T_0}+{\mathcal K}_{T_1}$ if and only if for some $\lambda > 0$, $h_{1}=\lambda(-{\mathcal K}_{T_0}h_0)$.

\end{thm}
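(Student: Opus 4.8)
The plan is to translate the curvature identity into a statement about harmonic functions on $\mathbb{D}$ and then exploit the special form of $g:=-\mathcal{K}_{T_0}\,h_0/h_1$ imposed by hypothesis (1) or (2). By Lemma \ref{trace1}, $\text{trace }\mathcal{K}_T = \mathcal{K}_{T_0} - \partial\bar{\partial}\log\bigl(h_1-\mathcal{K}_{T_0}h_0\bigr)$, while formula (\ref{linebundle}) gives $\mathcal{K}_{T_0}=-\partial\bar{\partial}\log h_0$ and $\mathcal{K}_{T_1}=-\partial\bar{\partial}\log h_1$. Subtracting,
$$\text{trace }\mathcal{K}_T-\mathcal{K}_{T_0}-\mathcal{K}_{T_1}=-\partial\bar{\partial}\log\frac{h_1-\mathcal{K}_{T_0}h_0}{h_1}=-\partial\bar{\partial}\log(1+g).$$
Since $\mathcal{K}_{T_0}<0$ on $\mathbb{D}$ we have $g>0$, so $\log(1+g)$ is a genuine real-valued function and $\text{trace }\mathcal{K}_T=\mathcal{K}_{T_0}+\mathcal{K}_{T_1}$ holds exactly when $\log(1+g)$ is harmonic. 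Moreover the desired conclusion, that $h_1=\lambda(-\mathcal{K}_{T_0}h_0)$ for some $\lambda>0$, says precisely that $g$ is a positive constant (with $\lambda=1/g$). So it suffices to prove that, under (1) or (2), $\log(1+g)$ is harmonic if and only if $g$ is constant.

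If $g$ is constant then $\log(1+g)$ is constant, hence harmonic; this direction uses neither (1) nor (2). For the converse, assume $\log(1+g)$ is harmonic. Exactly as in the proof of Proposition \ref{trace 2}, simple connectedness of $\mathbb{D}$ lets us take a harmonic conjugate of $\tfrac{1}{2}\log(1+g)$ and exponentiate to obtain a nonvanishing $\phi\in\text{Hol}(\mathbb{D})$ with $|\phi|^2=1+g$, whence $|\phi|>1$ on $\mathbb{D}$. Under hypothesis (1), $g=\|f\|^2$, so $|\phi|^2=1+\|f\|^2=\bigl\|1\oplus f\bigr\|^2$, where $1\oplus f\colon\mathbb{D}\to\mathbb{C}\oplus\mathcal{H}$ is a nonvanishing holomorphic function; hence $\bigl\|\phi^{-1}(1\oplus f)\bigr\|^2\equiv1$, and Lemma \ref{hm2} forces $\phi^{-1}(1\oplus f)$ to be a constant vector. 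Its first coordinate then shows that $\phi^{-1}$, and with it $\phi$ and $g=|\phi|^2-1$, is constant, as required.

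Under hypothesis (2) the equality in fact never holds, so the stated equivalence is vacuous. Here $g=\|f\|^{-2}$, so $|\phi|^2=1+\|f\|^{-2}$, and the boundary condition $\|f(w)\|^2\to\infty$ as $|w|\to1^-$ forces $|\phi(w)|\to1$ as $|w|\to1^-$. But $\phi$ is nonvanishing, so $\log|\phi|$ is harmonic on $\mathbb{D}$, strictly positive because $|\phi|>1$, and extends continuously to $\overline{\mathbb{D}}$ with boundary value $0$; the maximum principle then gives $\log|\phi|\le0$, a contradiction. Thus $\text{trace }\mathcal{K}_T=\mathcal{K}_{T_0}+\mathcal{K}_{T_1}$ cannot hold in case (2); and $h_1=\lambda(-\mathcal{K}_{T_0}h_0)$ cannot hold either, as it would give $\|f\|^2\equiv1/\lambda$, contradicting $\|f\|^2\to\infty$. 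Both sides of the asserted equivalence are therefore false.

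The step I expect to be most delicate is the bookkeeping in the opening reduction: one must choose the holomorphic frame of $\mathcal{E}_T$ so that the $h_0,h_1$ produced by Lemma \ref{trace1} coincide with $\|\gamma_0\|^2$ and $\|\gamma_1\|^2$ for the prescribed sections of $\mathcal{E}_{T_0}$ and $\mathcal{E}_{T_1}$, and check that $g=-\mathcal{K}_{T_0}h_0/h_1$ is insensitive to the residual scaling freedom in the frame. The other point requiring care is interpreting hypothesis (2) so that $\|f(w)\|^2\to\infty$ holds uniformly as $|w|\to1^-$, which is what allows $\log|\phi|$ to extend continuously to $\overline{\mathbb{D}}$ and makes the maximum-principle step legitimate.
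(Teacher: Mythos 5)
Your proof is correct and follows the paper's overall skeleton --- reduce via Lemma \ref{trace1} to the harmonicity of $\log\bigl((h_1-\mathcal{K}_{T_0}h_0)/h_1\bigr)$, exponentiate a harmonic conjugate to produce $\phi\in\text{Hol}(\mathbb{D})$ with $|\phi|^2=1+g$, $g=-\mathcal{K}_{T_0}h_0/h_1>0$, then force $\phi$ to be constant --- but it deviates at the two decisive points, in both cases to its advantage. Under hypothesis (1) the paper differentiates $1+\|f\|^2=|\phi|^2$ to get $\|f'\|=|\phi'|$, applies Lemma \ref{hm2} to $f'/\phi'$ on the set where $\phi'\neq 0$, writes $f=c\phi+d$ and reaches the contradiction $\|d\|^2+1=0$; your single application of Lemma \ref{hm2} to $\phi^{-1}(1\oplus f)$, whose norm is identically $1$, is shorter, avoids the case split on $\phi'$ and the restriction to a subdomain, and yields constancy of $\phi$ (hence of $g$, giving $\lambda=1/g$) in one stroke. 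Under hypothesis (2) your conclusion differs in substance from the paper's and is the accurate one: the paper infers from $|\phi|>1$ together with $|\phi(w)|\to 1$ as $|w|\to 1^-$ that $\phi$ is constant and then reads off $\lambda$, but a constant of modulus greater than one cannot have boundary modulus one; your maximum-principle argument shows that no such $\phi$ exists at all, so the trace identity never holds under (2), and since $h_1=\lambda(-\mathcal{K}_{T_0}h_0)$ would force $\|f\|^2\equiv 1/\lambda$, both sides of the equivalence fail and the statement is vacuously true, which is all the theorem asserts in that case (your reading of the limit as uniform in $|w|$ is the standard one and is what the paper itself uses). The frame bookkeeping you flag at the end is harmless and is handled no more explicitly in the paper: the sections named in the theorem take values in the orthogonal summands $\mathcal{H}_0$ and $\mathcal{H}_1$, so the associated frame of $\mathcal{E}_T$ satisfies the hypothesis of Lemma \ref{trace1} with exactly the $h_0,h_1$ of the statement, and no invariance under rescaling needs to be checked because both hypotheses (1)--(2) and the conclusion refer to these fixed $h_0,h_1$.
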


\begin{proof}
If $h$ denotes the Gram matrix

$$\begin{array}{lllll}
h(w)&=&\left(\begin{array}{ccccc}h_0(w)&
 \frac{\partial}{\partial w} h_0(w)\\
\frac{\partial}{{\partial} \overline{w}} h_0(w)&
\frac{\partial^2}{{\partial}\overline{w}\partial w} h_0(w)
+h_1(w)
\end{array}\right),\\
\end{array}
$$
by Lemma \ref{trace1}, we have
$$\text{trace}{\mathcal K}_{T}(w)={\mathcal K}_{T_0}(w)- \frac{\partial^2}{{\partial}\overline{w}\partial w}\log \bigg(h_1(w)-{\mathcal K}_{T_0}(w)h_0(w) \bigg).$$
If $\text{trace}{\mathcal K}_{T}={\mathcal K}_{T_0}+{\mathcal K}_{T_1},$ then $\frac{\partial^2}{{\partial}\overline{w}\partial w}\log \left(\frac{h_{1}-{\mathcal K}_{T_0}h_{0}}{h_{1}} \right)=0,$ and therefore, there exists $\phi\in\text{Hol}(\mathbb{D})$ such that $\frac{h_{1}-{\mathcal K}_{T_0}h_{0}}{h_{1}}=|\phi|^{2}.$

We first consider the condition $-\left(\mathcal{K}_{T_{0}}\frac{h_{0}}{h_{1}}\right)(w)=\Vert f(w)\Vert^{2},$ which implies $$1+\Vert f(w)\Vert^{2}=|\phi(w)|^{2},$$ and hence, $\|f'(w)\|^2=\phi^{'}(w)\overline{\phi^{'}}(w).$
 If $\phi^{'}=0$, then $\phi$ is a constant function. If not, we assume that  $\phi^{'}(w)\neq0$ by considering the open set $\{w \in \mathbb{D}: \phi(w) \neq 0\}$ instead of $\mathbb{D}$. We then have $\big\|\frac{f'(w)}{\phi^{'}(w)}\big\|=1$. It follows using Lemma \ref{hm2} that $\frac{f'(w)}{\phi^{'}(w)}=c,$ for a constant $c$ of length $1$. Then $f(w)=c\phi(w)+d$ for some $d\in \mathcal{H}$ and therefore,
 $$\begin{array}{llll}0&=&1+\Vert c\phi(w)+d\Vert^{2}-|\phi(w)|^{2}\\
&=&1+|c|^{2}|\phi(w)|^{2}+\phi(w)\langle c,d \rangle+\overline{\phi}(w)\langle d,c \rangle+\Vert d\Vert^{2}-|\phi(w)|^{2}\\
&=&1+\phi(w) \langle c,d \rangle +\overline{\phi}(w) \langle d,c \rangle+\Vert d\Vert^{2}. \\
\end{array}$$
Applying $\frac{\partial}{\partial w}$ to the above, we have $ \langle c,d \rangle=0,$ and hence $\Vert d\Vert^{2}+1=0,$ which is a contradiction. Thus $\phi(w)$ is a constant function, also making $\Vert f(w)\Vert^{2}=|\phi(w)|^{2}-1$ constant. Letting $\lambda=\frac{1}{\Vert f(w)\Vert^{2}}>0$, we have $h_{1}=\lambda(-{\mathcal K}_{T_0}h_0).$

We now consider the second condition of the theorem. If $\text{trace}{\mathcal K}_{T}={\mathcal K}_{T_0}+{\mathcal K}_{T_1}$ and $-\left(\mathcal{K}_{T_{0}}\frac{h_{0}}{h_{1}}\right)(w)=\Vert f(w)\Vert^{-2},$ we get $\Vert f(w)\Vert^{-2}=|\phi(w)|^{2}-1>0$
and $$\Vert f(w)\Vert^{2}=\frac{1}{|\phi(w)|^{2}-1}=\frac{1}{|\phi(w)|^{2}}\left(\frac{1}{1-|\phi(w)|^{-2}}\right)
=\frac{1}{|\phi(w)|^{2}}\sum\limits_{n=0}^{\infty}\frac{1}{|\phi(w)|^{2n}}.$$
Let $f(w)=\frac{1}{\phi(w)}\left(\sum\limits_{n=0}^{\infty}\frac{1}{\phi^n(w)}e_{n}\right)$, where $\{e_n\}_{n=0}^{\infty}$ is an orthonormal basis of $\mathcal{H}$. Then since $\lim\limits_{|w|\rightarrow1^{-}}\Vert f(w)\Vert^{2}=\infty,$ $$\lim\limits_{|w|\rightarrow1^{-}}|\phi(w)|^{2}=\lim\limits_{|w|\rightarrow1^{-}}\Vert f(w)\Vert^{-2}+1=1,$$
and it follows that since $|\phi(w)|>1$ for all $w \in \mathbb{D}$, the function $\phi$ is constant. If we let $\lambda^{-1}=|\phi|^{2}-1>0,$ then $h_{1}=\lambda(-{\mathcal K}_{T_0}h_0).$

Conversely, if $h_{1}=\lambda(-{\mathcal K}_{T_0}h_0)$ for some $\lambda>0$, then $\frac{\partial^2}{{\partial}\overline{w}\partial w}\log(\frac{h_{1}-{\mathcal K}_{T_0}h_{0}}{h_{1}})=0.$ Since $\text{trace}{\mathcal K}_{T}={\mathcal K}_{T_0}- \frac{\partial^2}{{\partial}\overline{w}\partial w}\log(h_1-{\mathcal K}_{T_0}h_0),$ we know that $\text{trace}{\mathcal K}_{T}={\mathcal K}_{T_0}+{\mathcal K}_{T_1}.$
\end{proof}

\begin{cor}
Let $T=\Big ( \begin{smallmatrix}
 T_0 & S_{0,1} \\
 0 & T_1 \\
\end{smallmatrix}\Big ) \in  \mathcal{F}B_2(\mathbb{D}).$ Suppose that $T_{i}\sim_{u}(M_z^*, \mathcal{H}_{K_i}),$ where the Hilbert space $\mathcal{H}_{K_i}$ has a reproducing kernel of the form $K_{i}(z,\omega)=\frac{1}{(1-z\overline{\omega})^{\lambda_{i}}}$ for some $\lambda_{i} \in \mathbb{N}$. Then $\text{trace}{\mathcal K}_{T}={\mathcal K}_{T_0}+{\mathcal K}_{T_1}$ if and only if $\lambda_{1}=\lambda_{0}+2.$
\end{cor}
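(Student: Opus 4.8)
The plan is to read the corollary off from Theorem~\ref{hm3} after computing all the relevant bundle data explicitly; the only real work is to check that, in this situation, one of the two alternatives in the hypothesis of Theorem~\ref{hm3} is always satisfied. Since $T_i\sim_u(M_z^*,\mathcal{H}_{K_i})$ with $K_i(z,\omega)=(1-z\overline{\omega})^{-\lambda_i}$, the eigenbundle $\mathcal{E}_{T_i}$ carries a non-vanishing holomorphic cross-section of squared norm $(1-|w|^2)^{-\lambda_i}$, and after rescaling I may take the cross-sections $\gamma_0$ of $\mathcal{E}_{T_0}$ and $\gamma_1$ of $\mathcal{E}_{T_1}$ with $\gamma_0(w)\perp\big(\partial_w\gamma_0(w)-\gamma_1(w)\big)$, as required by Theorem~\ref{hm3}, so that
$$h_0(w)=\frac{1}{(1-|w|^2)^{\lambda_0}},\qquad h_1(w)=\frac{1}{(1-|w|^2)^{\lambda_1}}.$$
By formula~\eqref{linebundle} we have $\mathcal{K}_{T_i}(w)=-\partial\overline{\partial}\log(1-|w|^2)^{-\lambda_i}=-\lambda_i(1-|w|^2)^{-2}$, and therefore
$$-\Big(\mathcal{K}_{T_0}\,\tfrac{h_0}{h_1}\Big)(w)=\lambda_0\,(1-|w|^2)^{\lambda_1-\lambda_0-2}.$$

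Set $m=\lambda_1-\lambda_0-2$. For $k\ge 1$ the expansion $(1-|w|^2)^{-k}=\sum_{j\ge 0}\binom{k+j-1}{j}|w|^{2j}$ lets me define, for an orthonormal basis $\{e_j\}_{j\ge 0}$ of $\ell^2$, the holomorphic function $g_k(w)=\sum_{j\ge 0}\binom{k+j-1}{j}^{1/2}w^{j}e_j$, which satisfies $\|g_k(w)\|^2=(1-|w|^2)^{-k}$ and $\|g_k(w)\|^2\to\infty$ as $|w|\to 1^-$. If $m\le 0$ then $-\mathcal{K}_{T_0}(h_0/h_1)=\|f\|^2$, with $f\equiv\sqrt{\lambda_0}$ when $m=0$ and $f=\sqrt{\lambda_0}\,g_{-m}$ when $m<0$, so condition~(1) of Theorem~\ref{hm3} holds. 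If $m\ge 1$ then $-\mathcal{K}_{T_0}(h_0/h_1)=\|f\|^{-2}$ with $f=\lambda_0^{-1/2}g_m$ and $\|f(w)\|^2\to\infty$, so condition~(2) of Theorem~\ref{hm3} holds (condition~(1) is unavailable for $m\ge 1$, since $\log(1-|w|^2)^{m}$ is superharmonic and so cannot be $\log\|f\|^2$ for a holomorphic $f$). Thus Theorem~\ref{hm3} applies for all $\lambda_0$ and $\lambda_1$.

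By Theorem~\ref{hm3}, $\text{trace }\mathcal{K}_T=\mathcal{K}_{T_0}+\mathcal{K}_{T_1}$ if and only if $h_1=\lambda(-\mathcal{K}_{T_0}h_0)$ for some $\lambda>0$. Substituting the explicit expressions, $-\mathcal{K}_{T_0}h_0=\lambda_0(1-|w|^2)^{-(\lambda_0+2)}$, so the condition becomes $(1-|w|^2)^{-\lambda_1}=\lambda\lambda_0\,(1-|w|^2)^{-(\lambda_0+2)}$ for all $w\in\mathbb{D}$, i.e.\ $(1-|w|^2)^{(\lambda_0+2)-\lambda_1}\equiv\lambda\lambda_0$. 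Since $w\mapsto(1-|w|^2)^{t}$ is constant on $\mathbb{D}$ only for $t=0$, such a $\lambda>0$ exists (namely $\lambda=1/\lambda_0$) exactly when $\lambda_1=\lambda_0+2$, which is the asserted equivalence. Alternatively, one can argue directly from Lemma~\ref{trace1}: $\text{trace }\mathcal{K}_T=\mathcal{K}_{T_0}+\mathcal{K}_{T_1}$ forces $1+\lambda_0(1-|w|^2)^{m}=|\phi(w)|^2$ for some $\phi\in\text{Hol}(\mathbb{D})$, and computing $\partial\overline{\partial}\log$ of the left side shows this is possible only when $m=0$.

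The step I expect to be the main obstacle is the verification of the hypothesis of Theorem~\ref{hm3}: producing, case by case, a Hilbert-space-valued holomorphic $f$ that realizes $-\mathcal{K}_{T_0}(h_0/h_1)$ either as $\|f\|^2$ or as $\|f\|^{-2}$ with $\|f(w)\|^2\to\infty$, and recognizing that exactly one of the two alternatives is available according to the sign of $\lambda_1-\lambda_0-2$. A secondary point requiring care is the normalization in the first paragraph: one must absorb the holomorphic factor contributed by the intertwiner $S_{0,1}$ into the choice of cross-sections so that $h_0$ and $h_1$ are genuinely the displayed powers of $(1-|w|^2)^{-1}$.
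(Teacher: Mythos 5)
Your proof is correct and follows essentially the same route as the paper's: compute $h_i(w)=(1-|w|^2)^{-\lambda_i}$ and $\mathcal{K}_{T_0}(w)=-\lambda_0(1-|w|^2)^{-2}$, observe that $-\mathcal{K}_{T_0}h_0/h_1=\lambda_0(1-|w|^2)^{\lambda_1-(\lambda_0+2)}$, and invoke Theorem \ref{hm3} to conclude that the trace identity holds exactly when this quantity is a positive constant, i.e.\ $\lambda_1=\lambda_0+2$. The only difference is that you explicitly verify hypotheses (1)--(2) of Theorem \ref{hm3} by constructing the vector-valued functions $g_k$ with $\|g_k(w)\|^2=(1-|w|^2)^{-k}$, a step the paper leaves implicit, while your normalization of $h_0$ (absorbing the holomorphic factor coming from $S_{0,1}$) is the same silent choice the paper itself makes.
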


\begin{proof}
Since ${K}_{i}(z,\omega)=\frac{1}{(1-z\overline{\omega})^{\lambda_{i}}}$, $h_i(w)=\frac{1}{(1-|\omega|^{2})^{\lambda_{i}}},$ and ${\mathcal K}_{T_0}(w)=-\frac{\lambda_{0}}{(1-|\omega|^{2})^{2}}.$
Then $$-\left({\mathcal K}_{T_{0}}\frac{h_{0}}{h_{1}}\right)(w)=\lambda_{0}(1-|\omega|^{2})^{\lambda_{1}-(\lambda_{0}+2)},$$
and therefore by Theorem \ref{hm3}, $\text{trace}{\mathcal K}_{T}={\mathcal K}_{T_0}+{\mathcal K}_{T_1}$ if and only if $-{\mathcal K}_{T_{0}}\frac{h_{0}}{h_{1}}$ is a constant, that is, $\lambda_{1}=\lambda_{0}+2.$
\end{proof}

\section{On the equation $\frac{\partial^2}{{\partial z} \partial \overline w}\log K(z,w)=[K(z,w)]^{p}$}

In Theorem \ref{hm3}, we encountered the condition $\|\gamma_1(w)\|^2=\lambda \|\gamma_0(w)\|^2\frac{\partial^{2}}{\partial\overline{w}\partial w}\log \|\gamma_0(w)\|^2.$ An associated question that has been raised by G. Misra is as follows:

Let $K :\mathbb{D}\times \mathbb{D} \rightarrow \mathbb{C}$ be a sesqui-analytic function. When is the function
$K(z, w)\frac{\partial^2}{{\partial z} \partial \overline w}\log{K}(z,w)$ a positive definite kernel?

One can come up with several counterexamples to show that $K(z,w)\frac{\partial^2}{{\partial z} \partial \overline w}\log K(z,w)$ need not be a positive definite kernel. A simple case giving an affirmative answer occurs when one sets $K=K^{\alpha}K^{\beta},$ where both $K^{\alpha}$ and $K^{\beta}$ are positive definite kernels. We give a necessary and sufficient condition for the equation $\frac{\partial^2}{{\partial z} \partial \overline w}\log K(z,w)=[K(z,w)]^{p}$ for some $p \in \mathbb{N}$ to hold for a diagonal reproducing kernel. At this point, we note that $K(z, w)\frac{\partial^2}{{\partial z} \partial \overline w}\log{K}(z,w)$ is a positive definite kernel, and give a special sufficient condition for the open question raised by G. Misra. We first start with a necessary condition for $K(z,w) \frac{\partial^2}{{\partial z} \partial \overline w}\log K(z,w)$ to be a positive definite kernel.

\begin{prop}\label{tf5}
Given a positive definite kernel $K(z,w)=1+\sum\limits_{i=1}^{\infty}a_{i}z^{i}\overline w^{i}$ on $\mathbb{D}\times \mathbb{D}$, if \\
$K(z,w)\frac{\partial^2}{{\partial z} \partial \overline w}\log K(z,w)$ is a positive definite kernel, then for any $n \in \mathbb{N},$ $$a_{n+1}\geq-\frac{1}{(n+1)^{2}}\left (\sum\limits_{i=1}^{n}i^{2}a_{n+1-i}a_{i}+
\sum\limits_{i=2}^{n+1}\sum\limits_{k=2}^{i}(-1)^{k-1}\frac{i^{2}}{k}
\left [\sum\limits_{\sum\limits_{j=1}^{k}l_{j}=i}a_{n+1-i}(\prod\limits_{j=1}^{k}a_{l_{j}})\right]\right).$$
\end{prop}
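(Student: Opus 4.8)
The plan is to use that every kernel occurring here is \emph{diagonal}. Write $K(z,w)=g(z\overline w)$ with $g(t)=1+\sum_{i\geq1}a_i t^i$, a holomorphic germ at $0$ with $g(0)=1$; then $\log K$ is well defined and sesqui-analytic near the origin and is again a function of $z\overline w$, say $\log K(z,w)=\sum_{m\geq1}b_m z^m\overline w^m$. Differentiating termwise,
$$\overline{\partial}\,\partial\log K(z,w)=\sum_{m\geq1}m^2 b_m\,z^{m-1}\overline w^{m-1},$$
so $K(z,w)\,\overline{\partial}\,\partial\log K(z,w)$ is itself diagonal, with coefficient of $z^n\overline w^n$ equal to $\sum_{i=1}^{n+1}i^2 b_i\,a_{n+1-i}$ (here $a_0=1$). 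The first ingredient of the proof is the classical fact that a diagonal kernel $\sum_m c_m z^m\overline w^m$ on $\mathbb{D}\times\mathbb{D}$ is positive definite if and only if $c_m\geq0$ for all $m$: each $z^m\overline w^m$ is positive definite, and conversely, restricting such a kernel to a circle $\{|z|=r\}$ yields the positive-definite function $\sum_m (c_m r^{2m})e^{im(\theta-\phi)}$ on the torus, so Herglotz's theorem forces $c_m r^{2m}\geq0$. Hence the hypothesis says precisely that $\sum_{i=1}^{n+1}i^2 b_i\,a_{n+1-i}\geq0$ for every $n$.

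The second ingredient is to express the $b_m$ through the $a_i$. From $\log(1+x)=\sum_{k\geq1}\frac{(-1)^{k-1}}{k}x^k$ with $x=\sum_{i\geq1}a_i z^i\overline w^i$, expanding the $k$-th power and noting that only compositions $l_1+\dots+l_k=m$ with each $l_j\geq1$ contribute to the coefficient of $z^m\overline w^m$, one gets
$$b_m=\sum_{k=1}^{m}\frac{(-1)^{k-1}}{k}\sum_{l_1+\dots+l_k=m}\prod_{j=1}^k a_{l_j}=a_m+\sum_{k=2}^{m}\frac{(-1)^{k-1}}{k}\sum_{l_1+\dots+l_k=m}\prod_{j=1}^k a_{l_j},$$
the $k=1$ summand being simply $a_m$ and the $k\geq2$ sum being empty when $m=1$.

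The final step is to substitute this into $\sum_{i=1}^{n+1}i^2 b_i\,a_{n+1-i}\geq0$. Splitting each $b_i$ into its $k=1$ part $a_i$ and its $k\geq2$ part, this sum becomes
$$\sum_{i=1}^{n+1}i^2 a_i a_{n+1-i}+\sum_{i=1}^{n+1}i^2 a_{n+1-i}\sum_{k=2}^{i}\frac{(-1)^{k-1}}{k}\sum_{l_1+\dots+l_k=i}\prod_{j=1}^k a_{l_j}.$$
In the first sum the $i=n+1$ term equals $(n+1)^2 a_{n+1}$ since $a_0=1$, and the rest is $\sum_{i=1}^{n}i^2 a_i a_{n+1-i}$; in the second sum the $i=1$ term drops out, leaving $\sum_{i=2}^{n+1}\sum_{k=2}^{i}(-1)^{k-1}\frac{i^2}{k}a_{n+1-i}\sum_{l_1+\dots+l_k=i}\prod_{j=1}^k a_{l_j}$. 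Thus the hypothesis reads $(n+1)^2 a_{n+1}+\big(\text{these two remaining sums}\big)\geq0$, and dividing by $(n+1)^2$ and moving everything but $a_{n+1}$ to the other side gives the asserted inequality. There is no real analytic difficulty here; the only points deserving a little care are the legitimacy of $\log K$ near the origin (guaranteed by $K$ being positive definite with $K(0,0)=1$) and the reindexing above — in particular noticing that the $i=n+1$ term of the triple sum is exactly the $k\geq2$ part of $b_{n+1}$, carried by the weight $i^2 a_{n+1-i}=(n+1)^2 a_0=(n+1)^2$.
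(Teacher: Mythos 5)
Your proposal is correct and follows essentially the same route as the paper: expand $\log K$ as a diagonal series with coefficients $b_m$ given by the $\log(1+x)$ expansion, compute the diagonal coefficient $(n+1)^2b_{n+1}+\sum_{i=1}^{n}i^2a_{n+1-i}b_i$ of $K\,\overline{\partial}\partial\log K$, invoke nonnegativity of the coefficients of a positive definite diagonal kernel, and reindex. The only (welcome) addition is that you justify the nonnegativity step via Herglotz/Bochner on the circle, a fact the paper simply asserts.
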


\begin{proof}
Setting
$$b_{n}:=\sum\limits_{k=1}^{n}(-1)^{k-1}\frac{1}{k}
\left (\sum\limits_{\sum\limits_{j=1}^{k}i_{j}=n}(\prod\limits_{j=1}^{k}a_{i_{j}})\right),\quad n\geq1,$$
we have
$\frac{\partial^2}{\partial \overline{w} \partial w}\log K (w,w)=\sum\limits_{n=1}^{\infty}n^{2}b_{n}|w|^{2(n-1)}.$
Then
$$\begin{array}{lll}
K(w, w)\frac{\partial^2}{\partial \overline{w} \partial w}\log K(w,w)&=&\left(1+\sum\limits_{i=1}^{\infty}a_{i}|w|^{2i}\right)
\left(\sum\limits_{n=1}^{\infty}n^{2}b_{n}|w|^{2(n-1)}\right)\\[4pt]
&=&b_{1}+\sum\limits_{k=1}^{\infty}\left((k+1)^{2}b_{k+1}+\sum\limits_{i=1}^{k}i^{2}a_{k+1-i}b_{i}\right)|w|^{2k}.
\end{array}$$
Note that for $n \geq 1$, the coefficient of $|w|^{2n}$ is given by
$$\begin{array}{llll}
&(n+1)^{2}b_{n+1}+\sum\limits_{i=1}^{n}i^{2}a_{n+1-i}b_{i}\\
=&(n+1)^{2}\left [\sum\limits_{k=1}^{n+1}(-1)^{k-1}\frac{1}{k}
\left(\sum\limits_{\sum\limits_{j=1}^{k}i_{j}=n+1}(\prod\limits_{j=1}^{k}a_{i_{j}})\right)\right]+
\sum\limits_{i=1}^{n}i^{2}a_{n+1-i}\left[\sum\limits_{k=1}^{i}(-1)^{k-1}\frac{1}{k}
\left(\sum\limits_{\sum\limits_{j=1}^{k}l_{j}=i}(\prod\limits_{j=1}^{k}a_{l_{j}})\right)\right]\\[4pt]
=&(n+1)^{2}a_{n+1}+(n+1)^{2}\left[\sum\limits_{k=2}^{n+1}(-1)^{k-1}\frac{1}{k}
\left(\sum\limits_{\sum\limits_{j=1}^{k}i_{j}=n+1}(\prod\limits_{j=1}^{k}a_{i_{j}})\right)\right]\\
&+\sum\limits_{i=1}^{n}i^{2}a_{n+1-i}\left[\sum\limits_{k=1}^{i}(-1)^{k-1}\frac{1}{k}
\left(\sum\limits_{\sum\limits_{j=1}^{k}l_{j}=i}(\prod\limits_{j=1}^{k}a_{l_{j}})\right)\right].
\end{array}$$
Assuming $a_0=1$, without loss of generality, we have
$$\begin{array}{lllll}
a_{n+1}&\geq&-\frac{1}{(n+1)^{2}}\Bigg ((n+1)^{2}\left[\sum\limits_{k=2}^{n+1}(-1)^{k-1}\frac{1}{k}
\bigg(\sum\limits_{\sum\limits_{j=1}^{k}i_{j}=n+1}(\prod\limits_{j=1}^{k}a_{i_{j}})\bigg)\right]\\
&&+\sum\limits_{i=1}^{n}i^{2}a_{n+1-i}\left[a_{i}+\sum\limits_{k=2}^{i}(-1)^{k-1}\frac{1}{k}
\bigg(\sum\limits_{\sum\limits_{j=1}^{k}l_{j}=i}(\prod\limits_{j=1}^{k}a_{l_{j}})\bigg) \right] \Bigg)\\
&=&-\frac{1}{(n+1)^{2}}\left(\sum\limits_{i=1}^{n}i^{2}a_{n+1-i}a_{i}+
\sum\limits_{i=2}^{n+1}\sum\limits_{k=2}^{i}(-1)^{k-1}\frac{i^{2}}{k}
\left[\sum\limits_{\sum\limits_{j=1}^{k}l_{j}=i}(\prod\limits_{j=1}^{k}a_{l_{j}})a_{n+1-i}\right ]\right).
\end{array}$$

\end{proof}

To answer the question when $\frac{\partial^2}{{\partial z} \partial \overline w}\log K(z,w)=[K(z,w)]^{p}$ for some $p \in \mathbb{N}$ to hold, we need one more result.

\begin{lm}\label{DI1} For any $n \in \mathbb{N},$
$$\sum\limits_{k=1}^{n}(-1)^{k-1}\frac{1}{k}
\left(\sum\limits_{\sum\limits_{j=1}^{k}i_{j}=n}(i_{1}+1)(i_{2}+1)\cdots(i_{k}+1)\right)=\frac{2}{n}.$$
\end{lm}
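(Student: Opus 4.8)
The plan is to read the left-hand side as the coefficient of $x^{n}$ in the power-series expansion of a logarithm. Introduce the formal power series
$$g(x) := \sum_{i=1}^{\infty}(i+1)\,x^{i},$$
which has zero constant term. Expanding $g(x)^{k}$ and collecting the terms of degree $n$ gives
$$[x^{n}]\,g(x)^{k}=\sum_{i_{1}+\cdots+i_{k}=n}(i_{1}+1)(i_{2}+1)\cdots(i_{k}+1),$$
the inner sum being over compositions of $n$ into exactly $k$ positive parts, which is precisely the expression appearing in the statement. Since each part is $\ge 1$, this coefficient vanishes whenever $k>n$, so for a fixed $n$ only the terms $1\le k\le n$ survive.

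Because $g$ has no constant term, the identity $\log\bigl(1+g(x)\bigr)=\sum_{k=1}^{\infty}\frac{(-1)^{k-1}}{k}\,g(x)^{k}$ holds as an identity of formal power series (equivalently, pointwise for $|x|$ small enough that $|g(x)|<1$). Extracting the coefficient of $x^{n}$ from both sides and using the previous paragraph, the right-hand side becomes exactly $\sum_{k=1}^{n}(-1)^{k-1}\frac1k\bigl(\sum_{\sum i_{j}=n}\prod_{j}(i_{j}+1)\bigr)$, i.e. the left-hand side of the lemma. So it remains only to compute $[x^{n}]\log\bigl(1+g(x)\bigr)$.

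For that I would use the closed form $g(x)=\sum_{i\ge 0}(i+1)x^{i}-1=\frac{1}{(1-x)^{2}}-1$, whence $1+g(x)=(1-x)^{-2}$ and therefore $\log\bigl(1+g(x)\bigr)=-2\log(1-x)=\sum_{n=1}^{\infty}\frac{2}{n}\,x^{n}$. Comparing coefficients of $x^{n}$ gives the asserted value $\frac{2}{n}$. There is no real obstacle here; the only point requiring a word of care is the bookkeeping that matches the finite sum $\sum_{k=1}^{n}$ in the statement with the a priori infinite series defining $\log(1+g)$, which is immediate since a composition of $n$ has at most $n$ parts. (An induction on $n$ based on the recursion satisfied by the coefficients $b_{n}$ used in Proposition \ref{tf5} is also possible, but the generating-function argument is shorter.)
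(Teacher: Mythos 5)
Your argument is correct and is essentially the same as the paper's: both expand $\log\bigl(1+\bigl(\tfrac{1}{(1-x)^{2}}-1\bigr)\bigr)=-2\log(1-x)$ as a power series and compare the coefficient of $x^{n}$, noting that $\tfrac{1}{(1-x)^{2}}-1=\sum_{i\ge 1}(i+1)x^{i}$. Your write-up just makes the bookkeeping (why only $k\le n$ contributes) more explicit than the paper does.
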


\begin{proof}
Since $\log\left (\frac{1}{1-x}\right)^{2}=-2\log(1-x)=\log\left[1+\left(\frac{1}{(1-x)^{2}}-1\right)\right]$ for $|x|<1$,
$$\sum\limits_{n=1}^{\infty}\frac{2}{n}x^{n}=\sum\limits_{k=1}^{\infty}(-1)^{k-1}
\frac{1}{k}\left[\frac{1}{(1-x)^{2}}-1\right]^{k}=\sum\limits_{k=1}^{\infty}(-1)^{k-1}\frac{1}{k}
\left(\sum\limits_{n=2}^{\infty}nx^{n-1}\right)^{k}.$$
One now considers the coefficient of $x^n$ to get the result.
\end{proof}

\begin{thm}\label{tf6}
Let $K(z,w)=1+\sum\limits_{i=1}^{\infty}a_{i}z^{i}\overline w^{i}$ be a positive definite kernel on $\mathbb{D} \times \mathbb{D}$. For $p \in \mathbb{N},$ $\frac{\partial^2}{{\partial z} \partial \overline w}\log K(z,w)=[K(z,w)]^{p}$ if and only if $K(z,w)=\left(1-\frac{pz\overline w}{2}\right)^{-\frac{2}{p}}.$
\end{thm}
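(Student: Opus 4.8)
### Proof Proposal

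The plan is to treat this as an ODE-type identity on the diagonal, since both sides are real-analytic functions of $|w|^2$ once we restrict $z = w$. Write $s = z\overline{w}$ and think of $K$ as a power series $K = 1 + \sum_{i\ge 1} a_i s^i$ in the single variable $s$. The operator $\frac{\partial^2}{\partial z\,\partial\overline w}$ acting on a function of $s$ alone behaves like $s \mapsto (s\,\frac{d}{ds})^2$ divided by $s$; more precisely, if $F(s) = \sum c_n s^n$ then $\frac{\partial^2}{\partial z\,\partial\overline w} F(z\overline w) = \sum n^2 c_n s^{n-1}$. So the equation $\frac{\partial^2}{\partial z\,\partial\overline w}\log K = K^p$ becomes, after multiplying through by $s$, the statement that $s\cdot(K^p)(s)$ equals $\sum_{n\ge 1} n^2 b_n s^n$, where the $b_n$ are the Taylor coefficients of $\log K$. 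This is a genuine differential equation in $s$: writing $L(s) = \log K(s)$, the left side of the original equation is $s L''(s) + L'(s) = (sL'(s))'$, hmm — let me instead directly use $\frac{\partial^2}{\partial z\,\partial\overline w} L(z\overline w) = L'(s) + s L''(s)$ evaluated appropriately; the cleanest route is the coefficient identity just stated.

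First I would establish the necessity direction. Assume the displayed equation holds. Comparing coefficients of $s^n$ on both sides gives a recursion: the $n$-th coefficient of $K^p$ equals $(n+1)^2 b_{n+1}$, where $b_{n+1} = \sum_{k=1}^{n+1}(-1)^{k-1}\frac1k\big(\sum_{\sum i_j = n+1}\prod a_{i_j}\big)$ as in Proposition \ref{tf5}. The key observation is that this recursion, together with the normalization $a_0 = 1$ (equivalently $K(0,0)=1$), determines all the $a_i$ uniquely: at each stage $a_{n+1}$ appears linearly with nonzero coefficient, so the sequence is forced. Therefore it suffices to exhibit one solution. Here is where Lemma \ref{DI1} enters: I would verify that $K(z,w) = \big(1 - \tfrac{p z\overline w}{2}\big)^{-2/p}$ satisfies the equation. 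For this candidate, $\log K = -\tfrac2p \log(1 - \tfrac{p s}{2})$, so $L'(s) = \frac{1}{1 - ps/2}$ and one computes $\frac{\partial^2}{\partial z\,\partial\overline w}\log K(z\overline w) = \frac{d}{ds}\big(s L'(s)\big)$ — again more carefully, $= L'(s) + s L''(s)$, and $L''(s) = \frac{p/2}{(1-ps/2)^2}$, giving $L' + sL'' = \frac{1}{1-ps/2} + \frac{sp/2}{(1-ps/2)^2} = \frac{1}{(1-ps/2)^2} = K^p$, exactly as needed. Lemma \ref{DI1} is the combinatorial incarnation of this computation at the level of the coefficients $b_n$ (it records $\sum_{k}(-1)^{k-1}\frac1k\sum_{\sum i_j = n}\prod(i_j+1) = \frac2n$), which is what one checks when expanding $\big(1-\tfrac{ps}{2}\big)^{-2/p}$ binomially and matching against the log-series; so either a direct ODE computation or the coefficient computation via Lemma \ref{DI1} finishes necessity.

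The converse is the routine direction: given $K = \big(1 - \tfrac{p z\overline w}{2}\big)^{-2/p}$, the computation in the previous paragraph shows directly that $\frac{\partial^2}{\partial z\,\partial\overline w}\log K = K^p$, and such $K$ is a standard positive definite kernel on $\mathbb D\times\mathbb D$ (it is a positive power, namely the $\tfrac{2}{p}$-th power scaled, of the Szegő-type kernel $(1-\tfrac{p}{2}z\overline w)^{-1}$, which is positive definite, and positive powers of positive definite kernels are positive definite). The main obstacle is organizing the necessity argument cleanly: one must argue that the recursion genuinely pins down the $a_i$ uniquely (checking the coefficient of $a_{n+1}$ is nonzero — it is $(n+1)^2 - (n+1)^2\cdot(\text{lower order})$, and the leading term $(n+1)^2 a_{n+1}$ survives as in the display following Proposition \ref{tf5}), and then that the explicit kernel solves it, for which Lemma \ref{DI1} does the bookkeeping. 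I would present the forward direction via the ODE $L'(s) + sL''(s) = e^{pL(s)}$ with $L(0)=0$: substituting $u = L'$ gives $u + su' = e^{pL}$, and differentiating or combining with $u = L'$ leads to a separable equation whose unique solution with the given initial data is $L(s) = -\tfrac2p\log(1-\tfrac{ps}{2})$; uniqueness of solutions to this analytic ODE with prescribed value at $0$ is what makes the argument rigorous, and then $K = e^L$ has the claimed form.
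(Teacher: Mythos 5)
Your proof is correct, and it reaches the conclusion by a leaner route than the paper. Both arguments ultimately rest on the same two pillars: (a) comparing Taylor coefficients of the diagonal power series forces a recursion in which the top coefficient enters linearly with the nonzero factor $(n+1)^2$, so the solution of the functional equation with $K(0,0)=1$ is unique; and (b) one exhibits the solution. Where you differ is in step (b): you verify the candidate $K=\bigl(1-\tfrac{p}{2}z\overline w\bigr)^{-2/p}$ by the closed-form computation $L'(s)+sL''(s)=\bigl(1-\tfrac{ps}{2}\bigr)^{-2}=K^p$ for $L=\log K$, $s=z\overline w$ --- which is nothing but the easy direction of the equivalence --- and this makes Lemma \ref{DI1} and the explicit coefficients $b_i=\tfrac{i+1}{2^i}p^i$ of $K^p$ unnecessary. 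The paper instead passes to $L=K^p$, uses the expansion from Proposition \ref{tf5}, guesses the $b_i$, and verifies the recursion combinatorially via Lemma \ref{DI1}; the payoff of that route is the explicit identification of the coefficients of $K^p$, while the payoff of yours is economy, plus the fact that you make explicit the uniqueness-of-recursion step that the paper only uses implicitly (``this amounts to showing that \dots satisfy the recursion''). Two small cautions: the blanket claim that positive powers of positive definite kernels are positive definite is false in general (here it is harmless, since the kernel is diagonal with nonnegative coefficients, and in any case positive definiteness of $K$ is a hypothesis of Theorem \ref{tf6}, so the converse direction needs only the derivative identity); and the appeal to ``uniqueness of solutions of the analytic ODE at $0$'' should not be read as a black-box ODE theorem, since the coefficient of $L''$ vanishes at $s=0$ --- but your power-series recursion argument already supplies exactly the uniqueness needed, so there is no gap.
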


\begin{proof}
First, it is easy to see that for $K(z,w)=\left(1-\frac{pz\overline w}{2}\right)^{-\frac{2}{p}},$ $$\frac{\partial^2}{{\partial z} \partial \overline w}\log K(z,w)=-\frac{2}{p}\frac{\partial^2}{{\partial z} \partial \overline w}\log\left(1-\frac{pz\overline w}{2}\right)=\left(1-\frac{pz\overline w}{2}\right)^{-2}=[K(z,w)]^{p}.$$

For the other direction, let $L(z,w):= \left(K(z,w)\right)^{p}=\left(1+\sum\limits_{i=1}^{\infty}a_{i}z^{i}\overline w^{i}\right)^{p}=1+\sum\limits_{i=1}^{\infty}b_{i}z^{i}\overline w^{i}.$ One of the steps in the proof of Proposition \ref{tf5} showed that $$\frac{\partial^2}{{\partial z} \partial \overline w}\log
L(z,w)=\sum\limits_{n=1}^{\infty}n^{2}\left[\sum\limits_{k=1}^{n}(-1)^{k-1}\frac{1}{k}
\bigg(\sum\limits_{\sum\limits_{j=1}^{k}i_{j}=n}(\prod\limits_{j=1}^{k}b_{i_{j}})\bigg)\right]z^{n-1}\overline w^{n-1}.$$
Note that $\frac{\partial^2}{{\partial z} \partial \overline w}\log K(z,w)=[K(z,w)]^{p}$ is equivalent to $\frac{\partial^2}{{\partial z} \partial \overline w}\log L(z,w)=pL(z,w),$ that is,
$$\sum\limits_{n=1}^{\infty}n^{2}\left[\sum\limits_{k=1}^{n}(-1)^{k-1}\frac{1}{k}
\bigg(\sum\limits_{\sum\limits_{j=1}^{k}i_{j}=n}(\prod\limits_{j=1}^{k}b_{i_{j}})\bigg)\right]z^{n-1}\overline w^{n-1}=p+p\sum\limits_{i=1}^{\infty}b_{i}z^{i}\overline w^{i}.$$
Obviously, $b_{1}=p,b_{2}=\frac{3}{2^{2}}p^{2},$ and $b_{3}=\frac{4}{2^{3}}p^{3}.$ We will show that for all $i \geq 1$, $$b_{i}=\frac{i+1}{2^{i}}p^{i}.$$
This amounts to showing that the $b_{i}=\frac{i+1}{2^{i}}p^{i}$ for $1 \leq i \leq n$ satisfy
$$n^{2}\left[\sum\limits_{k=1}^{n}(-1)^{k-1}\frac{1}{k}
\left (\sum\limits_{\sum\limits_{j=1}^{k}i_{j}=n}(\prod\limits_{j=1}^{k}b_{i_{j}})\right)\right]=pb_{n-1},
$$
which is equivalent to
$$\begin{array}{lllll}
p^{n}\frac{n}{2^{n-1}}&=&n^{2}\left[\sum\limits_{k=1}^{n}(-1)^{k-1}\frac{1}{k}
\sum\limits_{\sum\limits_{j=1}^{k}i_{j}=n}\frac{(i_{1}+1)p^{i_{1}}}{2^{i_{1}}}
\frac{(i_{2}+1)p^{i_{2}}}{2^{i_{2}}}\cdots\frac{(i_{k}+1)p^{i_{k}}}{2^{i_{k}}}\right]\\
&=&\frac{n^{2}}{2^{n}}p^{n}\left[\sum\limits_{k=1}^{n}(-1)^{k-1}\frac{1}{k}
\sum\limits_{\sum\limits_{j=1}^{k}i_{j}=n}(i_{1}+1)(i_{2}+1)\cdots(i_{k}+1)\right].
\end{array}$$
By Lemma $\ref{DI1},$
$$\sum\limits_{k=1}^{n}(-1)^{k-1}\frac{1}{k}
\sum\limits_{\sum\limits_{j=1}^{k}i_{j}=n}(i_{1}+1)(i_{2}+1)\cdots(i_{k}+1)=\frac{2}{n},$$
and hence, $b_i=\frac{i+1}{2^i}p^i$ for all $i \geq 1$.
It then follows that $$L(z,w)=\left(K(z,w)\right)^{p}=1+\sum\limits_{n=1}^{\infty}\frac{n+1}{2^{n}}p^{n}z^{n}\overline w^{n}=\left(1-\frac{pz\overline w}{2}\right)^{-2},$$
and therefore, $$K(z,w)=\left(1-\frac{pz\overline w}{2}\right)^{-\frac{2}{p}}.$$

\end{proof}

\section{Similarity of Operators in $\mathcal{F}B_n(\Omega)$}
The following lemma states that the operator establishing the similarity between two operators in $\mathcal{F}B_n(\Omega)$ is of a special form:

\begin{lm}[\cite{JJKM}]\label{dp}
If $X$ is an invertible operator that intertwines operators in $\mathcal{F}B_n(\Omega)$, then $X$ and $X^{-1}$ are upper triangular.
\end{lm}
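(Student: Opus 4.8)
The plan is to prove the apparently stronger fact that \emph{any} invertible $X$ with $XT=\widetilde T X$, where $T,\widetilde T\in\mathcal{F}B_n(\Omega)$, is upper triangular with respect to the block decompositions coming from \eqref{1.1T}; since $X^{-1}\widetilde T=TX^{-1}$ this covers $X$ and $X^{-1}$ simultaneously. Write $\mathcal{H}=\mathcal{H}_0\oplus\cdots\oplus\mathcal{H}_{n-1}$ and $\mathcal{N}_k=\mathcal{H}_0\oplus\cdots\oplus\mathcal{H}_k$. The shape of \eqref{1.1T} shows that each $\mathcal{N}_k$ is $T$-invariant with $T|_{\mathcal{N}_k}\in\mathcal{F}B_{k+1}(\Omega)$ (the intertwiners $T_iS_{i,i+1}=S_{i,i+1}T_{i+1}\neq 0$ for $i\le k-1$ survive), and $X$ is upper triangular exactly when $X\mathcal{N}_k\subseteq\widetilde{\mathcal{N}}_k$ for every $k$. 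I would also record the computational device behind everything: a bounded operator intertwining two operators in $B_1(\Omega)$ sends a nonvanishing holomorphic section of the one line bundle to a scalar multiple, by a fixed function in $\text{Hol}(\Omega)$, of such a section of the other, and this function vanishes identically iff the operator is $0$.

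The argument then proceeds by induction on $n$, the case $n=1$ being vacuous. For the step, write $T=\Big(\begin{smallmatrix}T_0&R\\0&T''\end{smallmatrix}\Big)$ and $\widetilde T=\Big(\begin{smallmatrix}\widetilde T_0&\widetilde R\\0&\widetilde T''\end{smallmatrix}\Big)$ relative to $\mathcal{H}_0\oplus(\mathcal{H}_1\oplus\cdots\oplus\mathcal{H}_{n-1})$, so that $T'',\widetilde T''\in\mathcal{F}B_{n-1}(\Omega)$ and the first entries $S_{0,1},\widetilde S_{0,1}$ of $R,\widetilde R$ are nonzero; write $X=\Big(\begin{smallmatrix}a&b\\c&d\end{smallmatrix}\Big)$, so $c\colon\mathcal{H}_0\to\widetilde{\mathcal{H}}_1\oplus\cdots\oplus\widetilde{\mathcal{H}}_{n-1}$. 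Comparing $(2,1)$-entries in $XT=\widetilde T X$ gives $cT_0=\widetilde T''c$. Everything comes down to showing $c=0$: granting this, the $(1,1)$- and $(2,2)$-entries show that $a$ intertwines $T_0,\widetilde T_0$ and $d$ intertwines $T'',\widetilde T''$; applying the same conclusion to the invertible intertwiner $X^{-1}$ of $\widetilde T,T$ kills the analogous block of $X^{-1}$, so $X$ and $X^{-1}$ are block upper triangular, whence $X\mathcal{H}_0=\widetilde{\mathcal{H}}_0$ and both $a$ and the induced quotient map $d$ are invertible; the induction hypothesis applied to $d$ then completes the step.

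To prove $c=0$ I would fix a nonvanishing holomorphic section $e_0$ of $\mathcal{E}_{T_0}$, together with nonvanishing sections of $\mathcal{E}_{\widetilde T_0},\mathcal{E}_{T_1},\mathcal{E}_{\widetilde T_1},\dots$, and encode $c$, the entries $S_{0,1},\widetilde S_{0,1}$, and all blocks of $X$ and $X^{-1}$ by their action on these sections. The genuine intertwiners become multiplication by scalar functions in $\text{Hol}(\Omega)$; the off-diagonal blocks of $X$ and $X^{-1}$ connecting consecutive $B_1$-summands satisfy instead a first-order relation in $w$ of the type ``$(\widetilde T_0-w)$ applied to the section equals a lower-order holomorphic term'', since differentiating $(T_0-w)e_0(w)=0$ gives $(T_0-w)e_0'(w)=e_0(w)$. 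Substituting these into the identities from $XT=\widetilde T X$, $X^{-1}\widetilde T=TX^{-1}$ and $XX^{-1}=X^{-1}X=I$, and separating the coefficients of $e_0(w)$ and $e_0'(w)$ (linearly independent, since $e_0(w)\in\ker(T_0-w)$ but $e_0'(w)\notin\ker(T_0-w)$), one is led to identities among scalar holomorphic functions forcing the function attached to $c$ to be divisible by the one attached to $S_{0,1}$ (or to $\widetilde S_{0,1}$, depending on which half of the computation one runs). Since $S_{0,1}\neq 0$, that function is not identically zero, so on the connected set $\Omega$ the function attached to $c$ must vanish identically; thus $c$ annihilates $\bigvee_{w}\ker(T_0-w)=\mathcal{H}_0$, i.e. $c=0$.

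The step I expect to be the real obstacle is exactly this last one. Its genuinely delicate feature is that when $n\ge 3$ the operator $c$ intertwines $T_0\in B_1(\Omega)$ with $T''$ of rank $n-1>1$, so the clean scalar-function description is not directly available for $c$ itself. I would handle this by invoking the $\mathcal{F}B_{n-1}$-structure of $T''$ once more: its invariant line-bundle summand $\mathcal{H}_1$, together with $S_{1,2}\neq0$, lets one first push $c$ into $\mathcal{H}_1$ and thereby reduce to a genuine $B_1(\Omega)$-to-$B_1(\Omega)$ intertwiner, to which the computation above applies; equivalently, one can organize the induction so that only $B_1$-to-$B_1$ intertwiners are ever compared. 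The careful use of invertibility of $X$ is essential throughout: without it an off-diagonal intertwiner between operators in $\mathcal{F}B_n(\Omega)$ need not vanish, so the hypothesis of the lemma cannot be relaxed.
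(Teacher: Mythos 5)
You should note at the outset that the paper contains no proof of this lemma: it is quoted from \cite{JJKM}, so your argument has to stand entirely on its own. Its bookkeeping is fine — reduce to showing that the lower-left block $c$ of $X$ (with respect to $\mathcal{H}_0\oplus(\mathcal{H}_1\oplus\cdots\oplus\mathcal{H}_{n-1})$) vanishes, get invertibility of the diagonal blocks by running the same statement for $X^{-1}$, then induct on $d$ — but the step carrying all the content, $c=0$, is not proved, and the mechanism you sketch for it cannot deliver it. Already for $n=2$, write $X^{-1}=\Big(\begin{smallmatrix} a_1&b_1\\ c_1&d_1\end{smallmatrix}\Big)$, $ct_0=\alpha\,\tilde t_1$, $c_1\tilde t_0=\rho\, t_1$, $S_{0,1}t_1=\psi t_0$, $\tilde S_{0,1}\tilde t_1=\tilde\psi \tilde t_0$. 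The block identities from $XT=\widetilde TX$ and $X^{-1}\widetilde T=TX^{-1}$ give $at_0=-\alpha\tilde\psi\,\partial\tilde t_0+\beta\,\tilde t_0$, $dt_1=\alpha\psi\,\partial\tilde t_1+\delta\,\tilde t_1$, $a_1\tilde t_0=-\rho\psi\,\partial t_0+\sigma\, t_0$, $d_1\tilde t_1=\rho\tilde\psi\,\partial t_1+\tau\, t_1$. Substituting into $ca_1+dc_1=0$ (from $XX^{-1}=I$) evaluated at $\tilde t_0(w)$ and separating the coefficients of $\tilde t_1$ and $\partial\tilde t_1$ — exactly the move you propose — the $\partial\tilde t_1$-coefficient cancels identically ($-\rho\psi\alpha+\rho\psi\alpha=0$) and one is left with the scalar relation $-\rho\psi\,\partial\alpha+\sigma\alpha+\rho\delta=0$; the identities coming from the other blocks of $XT=\widetilde TX$, $X^{-1}\widetilde T=TX^{-1}$ and $XX^{-1}=X^{-1}X=I$ that I checked at first and second jet order cancel in the same way and remain perfectly consistent with $\alpha\not\equiv 0$. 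So coefficient comparison never produces an identity of the shape $(\text{nonzero holomorphic function})\cdot\alpha\equiv0$. Moreover your closing inference is a non sequitur on its own terms: that the function attached to $c$ is \emph{divisible} by the (nonzero) function attached to $S_{0,1}$ would not force it to vanish.

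The deeper problem is that the vanishing of $c$ is an analytic fact about bounded operators, not an algebraic consequence of the intertwining relations, so no manipulation of holomorphic frames alone can prove it. Take $T=\widetilde T=\Big(\begin{smallmatrix}M_z^*&I\\ 0&M_z^*\end{smallmatrix}\Big)$ on $H^2\oplus H^2$: the block equations say precisely that $c$ commutes with $M_z^*$ and that $c=aM_z^*-M_z^*a$; formally (e.g.\ allowing an unbounded diagonal $a$) this has nonzero solutions, and what kills $c$ in the bounded category is a Kleinecke--Shirokov type argument (a commutator commuting with one of its factors is quasinilpotent, while a nonzero element of $\{M_z^*\}'$ is $\varphi(M_z)^*$ with nonzero spectrum). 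Nothing of this nature — boundedness, spectra, norm estimates — appears in your sketch, so the crucial step is missing rather than merely unwritten; likewise the reduction you propose for $n\ge3$ (``push $c$ into $\mathcal{H}_1$ using $S_{1,2}\neq0$'') is only named, not performed. For a correct argument you should go to the proof in \cite{JJKM} rather than try to force the conclusion out of frame identities.
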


Recall that any homogeneous operator $T \in B_1(\mathbb{D})$ can be expressed as $M^*_z$, the adjoint of the operator of multiplication on the analytic function space $\mathcal{H}_{K_{\alpha}}$ with reproducing kernel $K_{\alpha}(z,w)=\frac{1}{(1-z\overline{w})^{\alpha}}$ for some $\alpha \in \mathbb{N}$ (see \cite{Misra} for details). At times, the similarity of operators in $\mathcal{F}B_2(\mathbb{D})$ can be determined exclusively by considering the related operators in $B_1(\mathbb{D})$ in the decomposition (\ref{1.1T}).

\begin{thm}\label{mainsim}
Let $T=\Big (\begin{smallmatrix}T_0 & S_{0,1} \\
0 & T_1 \\
\end{smallmatrix}\Big ), \text{ }S=\Big (\begin{smallmatrix}S^*_{0} & \widetilde{S}_{0,1}\\
0 & S^*_{1} \\
\end{smallmatrix}\Big ) \in \mathcal FB_2(\mathbb{D})$, where $S^*_i \sim_u (M^*_z, \mathcal{H}_{{K}_i})$ and ${K}_i(z,w)=\frac{1}{(1-z\bar{w})^{k_i}}$ for some $k_i \in \mathbb{N}$.
Suppose that the following statements hold:
\begin{enumerate}
\item[(1)] Each $T_i \in \mathcal{L}(\mathcal{H}_i)$ is a $k_i$-hypercontraction, and
\item[(2)] There exist $t_1(w)\in \text{ker}(T_1-w)$ and a function $\phi \in GL({H}^{\infty}(\mathbb{D}))$ such that for all $w\in\mathbb{D},$
$$|\phi(w)|^2\frac{\|S_{0,1}t_1(w)\|^2}{\|t_1(w)\|^2}=\frac{\|\widetilde{S}_{0,1}{K}_1(.,\overline{w})\|^2}{{K}_{1}(w,w)}.$$
\end{enumerate}
Then $T\sim_s S$ if and only if
$$\mathcal{K}_{S^*_1}-\mathcal{K}_{T_1} \leq \frac{\partial^{2}}{\partial\overline{w}\partial w} \psi,$$ for some bounded subharmonic function $\psi$ on $\mathbb{D}$.
\end{thm}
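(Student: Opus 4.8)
The plan is to reduce the similarity problem to the Kwon--Treil criterion for a single weighted shift, which is exactly the inequality appearing in the conclusion. The strategy has two layers: first, use the structure theory (Lemma~\ref{dp}) to see that any candidate intertwiner must be upper triangular, so the similarity of $T$ and $S$ is governed by an intertwiner on the diagonal pieces together with a compatibility condition on the corner maps; second, use hypercontractivity of the $T_i$ to realize each $T_i$ up to similarity as $M_z^*$ on a suitable reproducing kernel space, so that the whole question collapses onto the corner.

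\textbf{Step 1: The diagonal entries.} By hypothesis $S_i^*\sim_u(M_z^*,\mathcal H_{K_i})$ with $K_i(z,w)=(1-z\bar w)^{-k_i}$, and $T_i$ is a $k_i$-hypercontraction. A $k_i$-hypercontraction in $B_1(\mathbb D)$ is, by the Kwon--Treil type result (the one quoted in the Introduction for the Hardy and weighted Bergman shifts, generalized to $k$-hypercontractions), similar to $M_z^*$ on $\mathcal H_{K_i}$ \emph{precisely} when $\mathcal K_{S_i^*}-\mathcal K_{T_i}\le\overline\partial\partial\psi_i$ for some bounded subharmonic $\psi_i$; since $T_i$ is assumed to be a $k_i$-hypercontraction, in fact $-\|\partial P/\partial w\|^2_{HS}=\mathcal K_{T_i}\ge \mathcal K_{S_i^*}$ up to such a $\psi_i$ automatically in the relevant range, so $T_i\sim_s S_i^*$ already for $i=0$ and the only genuine constraint is the one for $i=1$. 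Thus I would first record: there is an invertible $X_0\in\mathcal L(\mathcal H_0,\mathcal H_{K_0})$ with $X_0T_0=S_0^*X_0$, and the existence of an invertible $X_1$ with $X_1T_1=S_1^*X_1$ is equivalent to the displayed curvature inequality. This is where the $k_i$-hypercontractivity hypothesis enters decisively.

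\textbf{Step 2: Matching the corner maps.} Given $X_0,X_1$ as above, an intertwiner $X=\big(\begin{smallmatrix}X_0 & Y\\ 0& X_1\end{smallmatrix}\big)$ between $T$ and $S$ exists iff $X_0S_{0,1}-\widetilde S_{0,1}X_1=YT_1-S_1^*Y$, i.e. iff the operator $X_0S_{0,1}X_1^{-1}-\widetilde S_{0,1}\in\mathcal L(\mathcal H_{K_1},\mathcal H_{K_0})$ lies in the range of the Rosenblum map $Y\mapsto YT_1'-S_1^*Y$ where $T_1'=X_1T_1X_1^{-1}$. Since the $2\times2$ block is in $\mathcal FB_2$, the operators $S_{0,1}$ and $\widetilde S_{0,1}$ are, up to the intertwining relations, determined on eigenvectors by their action on sections $t_1(w)$ and $K_1(\cdot,\bar w)$; condition~(2) says exactly that after the gauge change by $\phi\in GL(H^\infty)$ the normalized squared-norms of the corner images agree. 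Following the proof of Proposition~4.something (the homogeneous $\mathcal FB_2$ similarity result in Section~1) I would produce from $\phi$ an invertible operator in the commutant $\{T_1\}'$ (multiplication by $\phi$ on the functional model of $T_1$, which makes sense since $\phi$ is a bounded analytic multiplier and $\phi^{-1}$ too), and use it to absorb the corner discrepancy; concretely one checks on the spanning set $\{t_1(w):w\in\mathbb D\}$ that $\widetilde S_{0,1}$ equals $S_{0,1}$ composed with this commutant element, so the obstruction in the Rosenblum map vanishes and $Y$ can be taken to be $0$ after replacing $X_1$ by $X_1$ times that commutant element. The upper-triangularity from Lemma~\ref{dp} guarantees no other form of intertwiner needs to be considered, so this produces the ``if'' direction, and reversing the argument (an intertwiner $X$ restricts to an intertwiner $X_1$ of $T_1,S_1^*$, forcing the curvature inequality by the $B_1$ theory) gives ``only if''.

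\textbf{Main obstacle.} The delicate point is Step~2: turning the scalar-valued, pointwise identity in condition~(2) into an actual bounded invertible operator equation $\widetilde S_{0,1}=S_{0,1}\cdot(\text{commutant of }T_1)$. Pointwise norm equality of $S_{0,1}t_1(w)$ and $\widetilde S_{0,1}K_1(\cdot,\bar w)$ only pins down the images up to a unimodular sesqui-holomorphic factor, so I must argue — using that both corner maps intertwine the \emph{same} pair of operators and that $\phi\in GL(H^\infty)$ rather than merely $\phi$ nonvanishing — that this factor is forced to be (the boundary value of) $\phi$ itself, via a Lemma~\ref{hm2}-style rigidity argument for holomorphic functions with constant-modulus ratios. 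Keeping track of which Hilbert space the corner map lands in, and verifying that multiplication by $\phi$ genuinely lies in $\{T_1\}'$ when $T_1$ is only assumed to be a $k_1$-hypercontraction (not literally $M_z^*$), is the technical heart; one resolves it by first transporting $T_1$ to its model via $X_1$, where the commutant is exactly $H^\infty$-multiplication, and only afterwards pulling back.
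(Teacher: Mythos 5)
There is a genuine gap, on two counts. First, your Step 1 claim that $T_0\sim_s S_0^*$ holds ``automatically'' because $T_0$ is a $k_0$-hypercontraction is false: for a hypercontraction in $B_1(\mathbb{D})$ the curvature inequality $\mathcal{K}_{S_0^*}-\mathcal{K}_{T_0}\leq \overline{\partial}\partial\psi_0$ (bounded subharmonic $\psi_0$) is exactly the nontrivial Kwon--Treil/Douglas--Kwon--Treil condition, and by Misra's curvature inequality the difference $\mathcal{K}_{S_0^*}-\mathcal{K}_{T_0}$ is nonnegative, so nothing forces it to be dominated in this way. In the paper the level-$0$ condition is not automatic; it is deduced from assumption (2) together with the intertwining relations, which force $\mathcal{K}_{S_0^*}-\mathcal{K}_{T_0}=\mathcal{K}_{S_1^*}-\mathcal{K}_{T_1}$ (this is precisely the content of the Remark following the theorem). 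Your conclusion that only $i=1$ matters is right, but for a reason your argument does not supply.

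Second, and more seriously, the corner-matching in your Step 2 is not established. After producing abstract similarities $X_0,X_1$ for the diagonal entries, the transported corner $X_0S_{0,1}X_1^{-1}$ and $\widetilde{S}_{0,1}$ both intertwine $S_0^*$ and $S_1^*$, hence act on the kernel vectors $K_1(\cdot,\overline{w})$ by holomorphic scalar multiples $a(w)$ and $b(w)$ of $K_0(\cdot,\overline{w})$; to absorb the discrepancy into the commutant of $S_1^*$ you need $a/b\in GL(H^{\infty}(\mathbb{D}))$. But condition (2) is a norm identity in the \emph{original} spaces $\mathcal{H}_0,\mathcal{H}_1$, while $X_0,X_1$ distort eigenvector norms by uncontrolled holomorphic factors, so neither boundedness nor bounded invertibility of $a/b$ follows; a Lemma-of-constant-modulus rigidity argument does not apply since nothing makes the relevant ratio of constant modulus. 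The paper circumvents exactly this: it uses the $k_i$-hypercontractivity through Agler's dilation to write $V_iT_iV_i^*=M_z^*|_{\mathcal{M}_i}$, uses assumption (2) together with the Cowen--Douglas Rigidity Theorem to build \emph{unitaries} $W_0,W_1$ sending $S_{0,1}t_1(w)\mapsto \widetilde{S}_{0,1}K_1(\cdot,\overline{w})\otimes e(w)$ and $t_1(w)\mapsto \phi(w)K_1(\cdot,\overline{w})\otimes e(w)$ (norm equality is what (2) buys, and this is where $\phi\in GL(H^\infty)$ enters), so that $T$ is unitarily a two-by-two operator with diagonal $M_z^*|_{\mathcal{N}_i}$ and explicit corner $W_0S_{0,1}W_1^*$; only then is the Kwon--Treil criterion applied to produce $X_i$ with $X_iS_i^*=M_z^*|_{\mathcal{N}_i}X_i$ (the hypothesis being equivalent to $-\mathcal{K}_{\mathcal{E}}\leq\overline{\partial}\partial\psi$ for the line bundle spanned by $e(w)$), and the corner identity $X_0\widetilde{S}_{0,1}=W_0S_{0,1}W_1^*X_1$ is verified pointwise on kernel vectors, using that the $X_i$ act on them by a common holomorphic factor $\lambda(w)$ (times $\phi(w)$ for $X_1$). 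Without some substitute for this explicit unitary model, your plan of taking $Y=0$ in the Rosenblum equation after multiplying $X_1$ by a commutant element cannot be carried out. Your necessity direction (upper-triangularity via Lemma \ref{dp}, then the $B_1$ theory applied to $X_1T_1=S_1^*X_1$) does agree with the paper.
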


\begin{rem} Assumption (2) of Theorem \ref{mainsim} has a nice geometric interpretation. Note that for $\phi \in \text{Hol}(\mathbb{D})$,
$$\frac{\partial^{2}}{\partial\overline{w}\partial w}\log \left(|\phi(w)|^2\frac{\|S_{0,1}t_1(w)\|^2}{\|\widetilde S_{0,1}{K}_1(\cdot,\overline w)\|^2}\right) =\frac{\partial^{2}}{\partial\overline{w}\partial w}\log \frac{\|t_1(w)\|^2}{{K}_{1}(w,w)},$$
is equivalent to $$\mathcal{K}_{S_0^*}-\mathcal{K}_{T_0}=\mathcal{K}_{S^*_1}-\mathcal{K}_{T_1}.$$ Hence, one can state Theorem \ref{mainsim} with the condition
$$\mathcal{K}_{S^*_0}-\mathcal{K}_{T_0} \leq \frac{\partial^{2}}{\partial\overline{w}\partial w} \psi,$$
instead.
\end{rem}
\begin{proof}
Recall that for an operator $A$ that is an $n$-hypercontraction, the defect operators are defined for $1 \leq m \leq n$ by
$$D_{m, A}=\left( \sum\limits_{k=0}^{m}(-1)^{k}{m \choose k}A^{*k}A^k \right)^{\frac{1}{2}}.$$
We begin by defining the operators $V_0: \mathcal{H}_0 \rightarrow \mathcal{M}_0$ and $V_1: \mathcal{H}_1 \rightarrow \mathcal{M}_1$ by
$$V_ix=\sum\limits_{n=0}^{\infty}\frac{z^n}{\|z^n\|_i^2}\otimes D_{k_i,T_i}T^n_ix,$$ for $x\in {\mathcal H}_i,$ where $\mathcal{M}_i:=\overline{\text{ran }V_i}$ and $\|z^n\|_i$ denotes the norm of $z^n$ on the space $\mathcal{H}_{K_i}$. Then using J. Agler's result in \cite{Agler2}, we see that each $V_i$ is a unitary operator satisfying $V_iT_i=M^*_z|_{\mathcal{M}_i}V_i$.

Suppose that $t_0(w)\in \text{ker}(T_0-w)$ and $t_1(w)\in \text{ker}(T_1-w)$ are such that $S_{0,1}t_1(w)=t_0(w)$ for $w \in \mathbb{D}$. We then have
$$\begin{array}{lllll}V_0t_0(w)&=&\sum\limits_{n=0}^{\infty}\frac{z^n}{\|z^n\|_0^2}\otimes D_{k_0,T_0}T^n_0t_0(w)\\
&=&\sum\limits_{n=0}^{\infty}\frac{z^nw^n}{\|z^n\|_0^2}\otimes D_{k_0,T_0}t_0(w)\\
&=&{K}_0(z,\overline{w})\otimes D_{k_0,T_0}t_0(w),\\
\end{array}
$$
for $w\in\mathbb{D}$.
Analogously, one can show that $$V_1t_1(w)={K}_1(z,\overline{w})\otimes D_{k_1, T_1}t_1(w).$$

Now since $S\in \mathcal{F}B_2(\mathbb{D})$, $S^*_{0} \widetilde{S}_{0,1}=\widetilde{S}_{0,1}S^*_1$ and there exists a function $\chi \in \text{Hol}(\mathbb{D})$ such that
$${K}_{0}(\cdot,\overline{w})=\chi(w)\widetilde{S}_{0,1}{K}_{1}(\cdot,\overline{w}),$$ for all $w\in{\mathbb{D}}.$ If we set $$e(w):=\chi(w)D_{k_0,T_0}S_{0,1}t_1(w)\in\mathcal{H}_0,$$ then
$$\begin{array}{lllll}
\|S_{0,1}t_1(w)\|^2 &=& \|{K}_{0}(\cdot,\overline{w})\otimes D_{k_0,T_0}S_{0,1}t_1(w)\|^2\\
&=&\|\chi(w)\widetilde{S}_{0,1}{K}_1(\cdot,\overline{w})\otimes D_{k_0, T_0}S_{0,1}t_1(w)\|^2\\
&=&\|\widetilde{S}_{0,1}{K}_1(\cdot,\overline{w})\otimes e(w)\|^2\\
&=&\|\widetilde{S}_{0,1}{K}_1(\cdot,\overline{w})\|^2 \| e(w)\|^2.
\end{array}$$
Similarly, $$\|t_1(w)\|^2={K}_1(w,w) \|D_{k_1,T_1}t_1(w)\|^2,$$ and since
$$|\phi(w)|^2\frac{\|S_{0,1}t_1(w)\|^2}{\|t_1(w)\|^2}=\frac{\|\widetilde{S}_{0,1}{K}_1(\cdot,\overline {w})\|^2}{{K}_{1}(w,w)},$$
for some $\phi\in GL({H}^{\infty}(\mathbb{D}))$,
we have $$\|t_1(w)\|^2=|\phi(w)|^2{K}_1(w,w) \|e(w)\|^2.$$

By the Rigidity Theorem given in \cite{CD}, we next define the isometries $W_0$ and $W_1$ by
$$W_0S_{0,1}t_1(w):=\widetilde {S}_{0,1}{K}_1(\cdot, \overline{ w})\otimes e(w), \text{ and}$$
$$W_1t_1(w):=\phi(w){K}_1(\cdot,\overline{w})\otimes e(w),$$
for $w \in \mathbb{D}$.
Setting $\mathcal{N}_i =\overline{\text{ran }W_i}$, the isometries $W_i\in \mathcal{L}(\mathcal{H}_{i}, \mathcal{N}_i)$ become unitary operators and
\begin{equation} \label{equ}
\Big (\begin{matrix}W_0 & 0 \\
0 & W_1 \\
\end{matrix}\Big )\Big (\begin{matrix}T_0 & S_{0,1} \\
0 & T_1 \\
\end{matrix}\Big )\Big (\begin{matrix}W^*_0 & 0 \\
0 &W^*_1 \\
\end{matrix}\Big )=
\Big (\begin{matrix}W_0V_0^*M_z^*|_{\mathcal{M}_0}V_0W^*_0 & W_0S_{0,1}W_1^* \\
0 & W_1V_1^*M^*_z|_{\mathcal{M}_1}V_1W^*_1 \\
\end{matrix}\Big )
=
\Big (\begin{matrix}M^*_z|_{\mathcal{N}_0} & W_0S_{0,1}W^*_1 \\
0 & M^*_z|_{\mathcal{N}_1} \\
\end{matrix}\Big ).
\end{equation}
From this, we deduce that
$$
T_i \sim_{u} M^*_z |_{\mathcal{N}_i}.
$$
Moreover, by a result in \cite{Lin}, we have for $w \in \mathbb{D}$, $$\text{ker}(M^*_z|_{\mathcal{N}_0}-w)=\bigvee_{w \in \mathbb{D}}\widetilde{S}_{0,1}{K}_1(\cdot, \overline{w})\otimes e(w)  \text{ and }  \text{ker}(M^*_z|_{\mathcal{N}_1}-w)=\bigvee_{w \in \mathbb{D}}{K}_1(\cdot, \overline{w})\otimes e(w).$$

We now prove that the condition $\mathcal{K}_{S_1^*}-\mathcal{K}_{T_1} \leq \frac{\partial^{2}}{\partial\overline{w}\partial w}\psi$ is sufficient for the similarity between $T$ and $S$.
Since $T_i\sim_{u} M^*_z|_{\mathcal{N}_i}$, we have $$\mathcal{K}_{S^*_0}-\mathcal{K}_{T_0}=\mathcal{K}_{S^*_1}-\mathcal{K}_{T_1}=\mathcal{K}_{S^*_1}-\mathcal{K}_{M^*_z|_{\mathcal{
N}_1}}=\mathcal{K}_{S^*_1}-(\mathcal{K}_{S^*_1}+\mathcal{K}_{\mathcal{E}})=-\mathcal{K}_{\mathcal{E}} \leq\frac{\partial^{2}}{\partial\overline{w}\partial w}\psi,$$ where $\mathcal{E}$ denotes the bundle with fiber
$\mathcal{E}(w):=\bigvee e(w).$
Under this condition, it is shown in \cite{Kwon1} that there exist invertible operators $X_0\in \mathcal{L}(\mathcal{H}_{K_0},\mathcal{N}_0)$ and $X_1\in \mathcal{L}(\mathcal{H}_{K_1},\mathcal{N}_1)$
 such that $$X_iS^*_i=M^*_z|_{\mathcal{N}_i}X_i.$$
It then follows for every $w \in \mathbb{D}$ that
$$X_0{\widetilde S}_{0,1}{K}_1(\cdot,\overline w)=\lambda(w){\widetilde S}_{0,1}{K}_1(\cdot,\overline w)\otimes e(w),$$
and
$$X_1{K}_1(\cdot,\overline w)=\lambda(w)\phi(w){K}_1(\cdot,\overline w)\otimes e(w),$$
for some $\lambda(w) \in\text{Hol}(\mathbb{D})$.
Moreover,
$$\begin{array}{llll}
W_0S_{0,1}W^*_1X_1{K}_{1}(\cdot,\overline w)&=&W_0S_{0,1}W^*_1(\lambda(w)\phi(w){K}_1(\cdot,\overline w)\otimes e(w))\\
&=&W_0S_{0,1}(\lambda(w)t_1(w))\\
&=&\lambda(w){\widetilde S}_{0,1}{K}_1(\cdot,\overline w)\otimes e(w)\\
&=&X_0{\widetilde S}_{0,1}{K}_1(\cdot,\overline w),
\end{array}
$$
so that
$$\Big (\begin{matrix}X_0 & 0 \\
0 & X_1 \\
\end{matrix}\Big )\Big (\begin{matrix}S^*_{0} & \widetilde S_{0,1} \\
0 & S^*_{1} \\
\end{matrix}\Big ) =
\Big (\begin{matrix}M^*_z|_{\mathcal{N}_0} & W_0S_{0,1}W^*_1 \\
0 & M^*_z|_{\mathcal{N}_1} \\
\end{matrix}\Big )\Big (\begin{matrix}X_0 & 0 \\
0 & X_1 \\
\end{matrix}\Big ).$$
Combining this result with (\ref{equ}), we finally conclude that $T\sim_s S$.

For the necessity, assume that $XT=SX$ for some invertible operator $X$. Then by Lemma \ref{dp},
$X=\Big (\begin{smallmatrix}X_0 & X_{0,1} \\
0 & X_1 \\
\end{smallmatrix}\Big )$ and since $X^{-1}$ is also upper-triangular, both $X_0$ and $X_1$ are invertible. Moreover,
$X_iT_i=S^*_iX_i$. Now, since $T_1$ is a $k_1$-hypercontraction,  by \cite{Kwon2}, there exists a bounded subharmonic function $\psi$ defined on $\mathbb{D}$ such that
$$\mathcal{K}_{S^*_1}-\mathcal{K}_{T_1} \leq \frac{\partial^{2}}{\partial\overline{w}\partial w} \psi.$$

\end{proof}

The following example shows that the condition $\phi \in GL(H^{\infty}(\mathbb{D}))$ in Theorem \ref{mainsim} is not an unreasonable assumption:
\begin{ex} Let $S=\Big (\begin{smallmatrix}S^*_{0} & \widetilde S_{0,1}\\
0 & S^*_{1} \\
\end{smallmatrix}\Big ) \in \mathcal FB_2(\mathbb{D})$ and let $S_{\phi}=\Big (\begin{smallmatrix}S^*_{0} &\phi(S^*_0) \widetilde S_{0,1}\\
0 & S^*_{1} \\
\end{smallmatrix}\Big )$ for some $\phi\in {H}^{\infty}(\mathbb{D})$ (note that $S_{\phi}\in {\mathcal FB_2(\mathbb{D})}$ as well). Suppose that $S^*_i \sim_u (M^*_z, \mathcal{H}_{{K}_i})$ with the reproducing kernel given by ${K}_i(z,w)=\frac{1}{(1-z\bar{w})^{k_i}}$ for some $k_i \in \mathbb{N}$. Note that the operators $S^*_0$ and $S^*_1$ can then be viewed as weighted shift operators with weight sequences $\left \{ \sqrt{\frac{n+1}{n+k_i}}\right\}^{\infty}_{n=0}.$

It is shown in \cite{JJK} that if $\lim\limits_{m\rightarrow \infty}m\frac{\prod\limits^{m}_{n=0}\sqrt{\frac{n+1}{n+k_1}}}{\prod\limits^{m}_{n=0}\sqrt{\frac{n+1}{n+k_0}}}=\infty,$ then an invertible operator $X$ that intertwines $S$ and $S_{\phi}$ should be diagonal. Since Stirling's formula gives $$\prod\limits^{m}_{n=0}\sqrt{\frac{n+1}{n+k_0}}\sim O(m^{\frac{1-k_0}{2}}) \text{ and } \prod\limits^{m}_{n=0}\sqrt{\frac{n+1}{n+k_1}}\sim O(m^{\frac{1-k_1}{2}}),$$ this is true when $k_1-k_0>2$. Then,
 \begin{equation}\nonumber
S\sim_s S_{\phi}\Leftrightarrow
 \left\{\begin{array}{lll}
X_0S^*_0=S^*_0X_0, \\
X_1S^*_1=S^*_1X_1,\\
X_{0} \widetilde{S}_{0,1}=\phi(S^*_0) \widetilde{S}_{0,1}X_1,\\
   \end{array}
\right.
\end{equation}
for some invertible operators $X_0\in \mathcal{L}(\mathcal{H}_{K_0})$ and $X_1\in \mathcal{L}(\mathcal{H}_{K_1}).$ Since $\{S^*_i\}^{\prime}={H}^{\infty}(\mathbb{D})$,  there exist
$\phi_0, \phi_1 \in GL({H}^{\infty}(\mathbb{D}))$ such that $X_i=\phi_i(S^*_i)$. Then by the equation $X_{0}\widetilde{S}_{0,1}=\phi(S^*_0) \widetilde {S}_{0,1}X_1$, we have $$\phi_0(S^*_0)\widetilde{S}_{0,1}=\phi(S^*_0)\phi_1(S^*_0)\widetilde{S}_{0,1}.$$
Since it is known that $\widetilde{S}_{0,1}$ has dense range (see \cite{JJKM}), it follows that $\phi_0(S^*_0)=\phi(S^*_0)\phi_1(S^*_0)$, and therefore, $\phi\in  GL({H}^{\infty}(\mathbb{D}))$.

\end{ex}

Once an additional intertwining condition is imposed, Theorem \ref{mainsim} can be generalized to operators in the class $\mathcal{F}B_n(\mathbb{D})$:
\begin{thm}\label{mainthm} Let $T=\begin{tiny}\begin{pmatrix}
T_{0} & S_{0,1} & S_{0,2}&\cdots&S_{0,n-2}&S_{0,n-1}\\
0 &T_{1}&S_{1,2}&\cdots&S_{1,n-2}&S_{1,n-1} \\
0 &0&T_{2}&\cdots&S_{2,n-2}&S_{2,n-1} \\
\vdots&\vdots&\vdots&\ddots&\vdots&\vdots\\
0&0&0&\cdots&T_{n-2}&S_{n-2,n-1}\\
0&0&0&\cdots&0&T_{n-1}\\
\end{pmatrix}\end{tiny} \text{and }S=\begin{tiny}\begin{pmatrix}
S^*_{0} & {\widetilde S}_{0,1} & {\widetilde S}_{0,2}&\cdots&{\widetilde S}_{0,n-2}&{\widetilde S}_{0,n-1}\\
0 &S^*_{1}&{\widetilde S}_{1,2}&\cdots& {\widetilde S}_{1,n-2}& {\widetilde S}_{1,n-1} \\
0 &0 &S^*_{2}&\cdots& {\widetilde S}_{2,n-2}& {\widetilde S}_{2,n-1} \\
\vdots&\vdots&\vdots&\ddots &\vdots&\vdots\\
0&0&0&\cdots&S^*_{n-2}&{\widetilde S}_{n-2,n-1}\\
0&0&0&\cdots&0&S^*_{n-1}\\
\end{pmatrix}\end{tiny}$ both be in $\mathcal{F}B_n(\mathbb{D})$,  where $S^*_i=(M^*_z, \mathcal{H}_{{K}_i})$ and  ${K}_i(z,w)=\frac{1}{(1-z\bar{w})^{k_i}}$ for some $k_i \in \mathbb{N}$ and for all $0 \leq i \leq n-1$.
Suppose that the following conditions hold:
\begin{enumerate}
\item[(1)] Each $T_i \in \mathcal{L}(\mathcal{H}_i)$ is a $k_i$-hypercontraction for $0 \leq i \leq n-1,$
\item[(2)] There exist functions $\{\phi_i\}^{n-1}_{i=0} \subset GL({H}^{\infty}(\mathbb{D}))$ such that for all $0 \leq i < j \leq n-1$ and for all $w \in \mathbb{D}$,
$$\prod\limits_{k=i}^{j-1}|\phi_k(w)|^2\frac{|\langle S_{i,j}t_j(w), t_i(w)\rangle|}{\|t_{j}(w)\|^2}=\frac{|\langle {\widetilde S}_{i,j} {\widetilde {K}}_j(w), {\widetilde{ K}}_i(w)\rangle|}{\|{\widetilde {K}}_{j}(w)\|^2},$$  where $t_{n-1}(w) \in \text{ker }(T_{n-1}-w)$, $\widetilde{K}_{n-1}(w)={K}_{n-1}(\cdot,\overline{w}),$ and the other terms are inductively defined as $t_{n-i}(w)=S_{n-i, n-i+1}t_{n-i+1}(w)$ and  ${\widetilde{K}}_{n-i}(w)={\widetilde S}_{n-i,n-i+1}{\widetilde {K}}_{n-i+1
}(w)$ for $2 \leq i \leq n,$ and
\item[(3)] $T_iS_{i, j}=S_{i, j}T_j$ and ${S^*_i} \widetilde{S}_{i, j}= \widetilde{S}_{i, j}{S^*_j}$ for all $0 \leq i < j \leq n-1$.
\end{enumerate}

Then $T\sim_s S$ if and only if
$$\mathcal{K}_{S^*_{n-1}}-\mathcal{K}_{T_{n-1}} \leq\frac{\partial^{2}}{\partial\overline{w}\partial w}\psi,$$ for some bounded subharmonic function $\psi$ defined on $\mathbb{D}$.

\end{thm}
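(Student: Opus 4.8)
The plan is to imitate the proof of Theorem~\ref{mainsim}: construct an explicit functional model for $T$ whose diagonal blocks are restrictions of $M^*_z$, and then transport the $n=1$ similarity theorem of H.~Kwon and S.~Treil block by block. Since each $T_i$ is a $k_i$-hypercontraction, J.~Agler's theorem \cite{Agler2} gives a unitary $V_i\colon\mathcal{H}_i\to\mathcal{M}_i$, with $\mathcal{M}_i\subseteq\mathcal{H}_{K_i}\otimes\overline{\operatorname{ran}D_{k_i,T_i}}$ an $M^*_z$-invariant subspace, such that $V_iT_i=M^*_z|_{\mathcal{M}_i}V_i$ and $V_it_i(w)=K_i(\cdot,\overline{w})\otimes D_{k_i,T_i}t_i(w)$ for the sections $t_i$ of condition~(2); in particular $\|t_i(w)\|^2=K_i(w,w)\,\|D_{k_i,T_i}t_i(w)\|^2$. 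On the model side, since each $S^*_i\in B_1(\mathbb{D})$ and $S\in\mathcal{F}B_n(\mathbb{D})$, one checks as in \cite{JJKM} that every $\widetilde{K}_i(w)$ is a non-vanishing holomorphic section of $\mathcal{E}_{S^*_i}$, so there is a non-vanishing $\chi_i\in\operatorname{Hol}(\mathbb{D})$ with $K_i(\cdot,\overline{w})=\chi_i(w)\widetilde{K}_i(w)$.

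Next I would introduce the error bundle $\mathcal{E}$ with fibre $\mathcal{E}(w):=\bigvee e(w)$, where $e(w):=\chi_0(w)\,D_{k_0,T_0}t_0(w)$. The instances of condition~(2) with $j=i+1$, in which both inner products collapse to norms because $t_i=S_{i,i+1}t_{i+1}$ and $\widetilde{K}_i=\widetilde{S}_{i,i+1}\widetilde{K}_{i+1}$, telescope to $\|t_j(w)\|^2=\bigl|\prod_{k=0}^{j-1}\phi_k(w)\bigr|^2\,\|\widetilde{K}_j(w)\|^2\,\|e(w)\|^2$ for every $j$. Hence, by the Rigidity Theorem of \cite{CD}, the rule $W_jt_j(w):=\bigl(\prod_{k=0}^{j-1}\phi_k(w)\bigr)\widetilde{K}_j(w)\otimes e(w)$ extends to a unitary $W_j\colon\mathcal{H}_j\to\mathcal{N}_j:=\overline{\operatorname{ran}W_j}$ with $W_jT_j=M^*_z|_{\mathcal{N}_j}W_j$. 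Condition~(3) and the one-dimensionality of the fibres give $S_{i,j}t_j(w)=\psi_{i,j}(w)t_i(w)$ and $\widetilde{S}_{i,j}\widetilde{K}_j(w)=\widetilde{\psi}_{i,j}(w)\widetilde{K}_i(w)$ for holomorphic $\psi_{i,j},\widetilde{\psi}_{i,j}$, and the remaining instances of condition~(2) then collapse to $|\psi_{i,j}|\equiv|\widetilde{\psi}_{i,j}|$ on $\mathbb{D}$, forcing $\psi_{i,j}$ and $\widetilde{\psi}_{i,j}$ to differ by a unimodular constant. Conjugating $T$ by $\operatorname{diag}(W_0,\dots,W_{n-1})$ thus produces a block upper-triangular operator whose $(i,i)$ entry is $M^*_z|_{\mathcal{N}_i}$, so $T_i\sim_{u}M^*_z|_{\mathcal{N}_i}$; and since $\ker(M^*_z|_{\mathcal{N}_i}-w)$ is spanned by $\widetilde{K}_i(w)\otimes e(w)$ (by \cite{Lin}), comparing curvatures gives $\mathcal{K}_{T_i}=\mathcal{K}_{S^*_i}+\mathcal{K}_{\mathcal{E}}$, i.e.\ $\mathcal{K}_{S^*_i}-\mathcal{K}_{T_i}=-\mathcal{K}_{\mathcal{E}}$ for \emph{all} $i$.

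For sufficiency, the hypothesis $\mathcal{K}_{S^*_{n-1}}-\mathcal{K}_{T_{n-1}}\le\overline{\partial}\partial\psi$ now reads $-\mathcal{K}_{\mathcal{E}}\le\overline{\partial}\partial\psi$, so \cite{Kwon1} supplies, for each $i$, an invertible $X_i\in\mathcal{L}(\mathcal{H}_{K_i},\mathcal{N}_i)$ with $X_iS^*_i=M^*_z|_{\mathcal{N}_i}X_i$; because the error bundle $\mathcal{E}$ is the same for every $i$, these can be normalised to act on $\ker(S^*_i-w)$ through a common holomorphic factor, $X_i\widetilde{K}_i(w)=\lambda(w)\bigl(\prod_{k=0}^{i-1}\phi_k(w)\bigr)\widetilde{K}_i(w)\otimes e(w)$. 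Substituting into the off-diagonal blocks of the model of $T$ and using $|\psi_{i,j}|\equiv|\widetilde{\psi}_{i,j}|$ shows that $\operatorname{diag}(X_0,\dots,X_{n-1})$ carries $S$ onto $\operatorname{diag}(W_0,\dots,W_{n-1})\,T\,\operatorname{diag}(W_0,\dots,W_{n-1})^{*}$, whence $T\sim_{s}S$. For necessity, Lemma~\ref{dp} forces any invertible $X$ with $XT=SX$ to be upper triangular with invertible diagonal blocks $X_i$ satisfying $X_iT_i=S^*_iX_i$; applying the converse half of \cite{Kwon2} to the $k_{n-1}$-hypercontraction $T_{n-1}$ then produces a bounded subharmonic $\psi$ on $\mathbb{D}$ with $\mathcal{K}_{S^*_{n-1}}-\mathcal{K}_{T_{n-1}}\le\overline{\partial}\partial\psi$.

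The main obstacle I anticipate is the normalisation step in the sufficiency argument: the operators $X_i$ produced by the $n=1$ theorem are determined only up to the commutant of $S^*_i$ (multiplication by an invertible $H^\infty$ function), and one must select a \emph{simultaneous} normalisation for which the single operator $\operatorname{diag}(X_0,\dots,X_{n-1})$ intertwines all of the off-diagonal blocks $\widetilde{S}_{i,j}$ at once, with the telescoped products $\prod_k\phi_k$ cancelling correctly across every index pair $(i,j)$. This is precisely what conditions~(2) and~(3) are engineered to make possible --- condition~(2) forces one common error bundle $\mathcal{E}$, so every $X_i$ sees the same geometry and the same $\lambda$, while condition~(3) reduces the off-diagonal intertwining to the scalar identities $|\psi_{i,j}|\equiv|\widetilde{\psi}_{i,j}|$ --- but carrying out the bookkeeping of the $H^\infty$ ambiguities and of these cancellations across the full upper-triangular pattern is the technical heart of the proof.
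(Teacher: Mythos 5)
Your proposal follows the paper's own proof essentially step for step: Agler's model for the hypercontractions $T_i$, the single error bundle $\mathcal{E}(w)=\bigvee e(w)$, the isometries $W_i$ built from the telescoped products $\prod_{k}\phi_k$, the invertible intertwiners $X_i$ supplied by \cite{Kwon1} under the curvature hypothesis, reduction of the off-diagonal intertwining to the scalar identities between $\psi_{i,j}$ and $\widetilde{\psi}_{i,j}$ via conditions (2) and (3), and necessity via Lemma \ref{dp} together with the converse direction from \cite{Kwon2}. The normalisation issue you flag as the technical heart (choosing one $\lambda$ for all $X_i$ and resolving the unimodular ambiguity between $\psi_{i,j}$ and $\widetilde{\psi}_{i,j}$) is handled in the paper at the same level of brevity --- it simply asserts a common $\lambda(w)$ and concludes $\psi_{i,j}=\widetilde{\psi}_{i,j}$ from $|\psi_{i,j}|=|\widetilde{\psi}_{i,j}|$ --- so your route does not genuinely diverge from the paper's.
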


\begin{proof} As in the proof of Theorem  \ref{mainsim}, there exists a holomorphic Hermitian vector bundle $\mathcal{E}$ over $\mathbb{D}$ with fiber $\mathcal{E}(w)=\bigvee e(w)$ such that for $0 \leq i \leq n-2$,
$$\|t_i(w)\|^2=\|S_{i,i+1}t_{i+1}(w)\|^2 =\|\widetilde{S}_{i,i+1}\widetilde{{K}}_{i+1}(w)\|^2 \| e(w)\|^2=\|\widetilde{K}_{i}(w)\|^2\| e(w)\|^2,$$
where $t_i(w)\in \text{ker}(T_i-w)$, $t_{i+1}(w)\in \text{ker}(T_{i+1}-w),$ and $S_{i,i+1}t_{i+1}(w)=t_i(w)$ for $w\in\mathbb{D}$.
Now let $j=i+1$ in assumption (2) to obtain  $$|\phi_i(w)|^2\frac{\|t_i(w)\|^2}{\|t_{i+1}(w)\|^2}=\frac{\|{\widetilde {K}}_i(w)\|^2}{\|{\widetilde { K}}_{i+1}(w)\|^2},$$ from which it follows for $1 \leq i \leq n-1$ that $$\|t_i(w)\|^2=\prod\limits_{k=0}^{i-1} |\phi_k(w)|^2\|{\widetilde{ K}}_{i}(w)\|^2\| e(w)\|^2.$$

We next define the isometries $W_i$ as $W_0t_0(w)=\widetilde{K}_0(w) \otimes e(w)$ and for $1 \leq i \leq n-1$,$$W_it_i(w)=\prod\limits_{k=0}^{i-1} \phi_k(w){\widetilde{ K}}_{i}(w)\otimes e(w).$$ Then
$$\begin{tiny}\begin{pmatrix}
T_{0} & S_{0,1} & S_{0,2}&\cdots&S_{0,n-2}&S_{0,n-1}\\
0 &T_{1}&S_{1,2}&\cdots&S_{1,n-2}&S_{1,n-1} \\
0 &0&T_{2}&\cdots&S_{2,n-2}&S_{2,n-1} \\
\vdots&\vdots&\vdots&\ddots&\vdots&\vdots\\
0&0&0&\cdots&T_{n-2}&S_{n-2,n-1}\\
0&0&0&\cdots&0&T_{n-1}\\
\end{pmatrix}\sim_{u}\begin{pmatrix}
M^*_z|_{\mathcal{N}_0} & W_0{S}_{0,1}W^*_1 & W_0{ S}_{0,2}W^*_2&\cdots&W_0{S}_{0,n-2}W^*_{n-2}&W_0{S}_{0,n-1}W^*_{n-1}\\
0 &M^*_z|_{\mathcal{N}_1}&W_1{S}_{1,2}W^*_2&\cdots& W_1{S}_{1,n-2}W^*_{n-2}& W_1{S}_{1,n-1}W^*_{n-1} \\
0 &0&M^*_z|_{\mathcal{N}_2}&\cdots& W_2{S}_{2,n-2}W^*_{n-2}& W_2{S}_{2,n-1}W^*_{n-1} \\
\vdots&\vdots&\vdots&\ddots&\vdots&\vdots\\
0&0&0&\cdots&M^*_z|_{\mathcal{N}_{n-2}}&W_{n-2}{ S}_{n-2,n-1}W^*_{n-1}\\
0&0&0&\cdots&0&M^*_z|_{\mathcal{N}_{n-1}}\\
\end{pmatrix}\end{tiny},$$
 $\mathcal{N}_i=\overline{\text{ran}W_i}$ for $0\leq i\leq n-1$. Proceeding again as in the proof of Theorem  \ref{mainsim}, there exist invertible operators $X_i\in \mathcal{L}(\mathcal{H}_{K_i}, \mathcal{N}_i)$
for $0\leq i\leq n-1$ such that $$X_iS^*_i=M^*_z|_{\mathcal{N}_i}X_i.$$
Furthermore, there exists some $\lambda(w) \in \text{Hol}(\mathbb{D})$ satisfying $$X_0{\widetilde S}_{0,j}\widetilde{K}_j(w)=\lambda(w){\widetilde S}_{0,j}\widetilde{K}_j(w)\otimes e(w),$$
and for $1\leq j\leq n-1$,
$$X_j\widetilde{K}_j(w)=\lambda(w)\prod\limits^{j-1}_{k=0}\phi_k(w)\widetilde{K}_j(w)\otimes e(w).$$ It can also be checked through direct calculation that for $0 \leq i \leq n-2$,
$$X_i\widetilde{S}_{i, i+1}=W_iS_{i, i+1}W^*_{i+1}X_{i+1}.$$

To prove that $T$ is similar to $S$, we need only check that for $0 \leq i < j \leq n-1$,
 $$X_{i}\widetilde{S}_{i,j}= W_{i}S_{i,j}W^*_jX_j.$$ Note that since  $T_iS_{i,j}=S_{i,j}T_{j}$ and  $S^*_i{\widetilde S}_{i,j}={\widetilde S}_{i,j}S^*_{j}$, there exist functions $\psi_{i,j}, \widetilde{\psi}_{i,j} \in \text{Hol}(\mathbb{D})$ such that $S_{i,j}t_{j}=\psi_{i,j}t_i$ and $\widetilde{S}_{i,j}{\widetilde{ K}}_{j}=\widetilde{\psi}_{i,j}{\widetilde{K}}_{i}.$ Then for $1 \leq i < j \leq n-1$,
 $$X_{i}{\widetilde S}_{i,j}{\widetilde{ K}}_{j}(w)=X_i(\widetilde{\psi}_{i,j}(w){\widetilde{ K}}_{i}(w))
=\lambda(w)\widetilde{\psi}_{i,j}(w)\prod\limits_{k=0}^{i-1} \phi_k(w){\widetilde{K}}_{i}(w)\otimes e(w)$$
and
$$\begin{array}{llll}
W_{i}S_{i,j}W^*_{j}X_{j}{\widetilde{ K}}_{j}(w)&=&W_{i}S_{i,j}W^*_{j}\left(\lambda(w)\prod\limits_{k=0}^{j-1} \phi_k(w){\widetilde{K}}_{j}(w)\otimes e(w)\right)\\
&=&\lambda(w)W_{i}S_{i,j}t_{j}(w)\\
&=&\lambda(w)W_{i}(\psi_{i,j}(w)t_i(w))\\
&=&\lambda(w)\psi_{i,j}(w)\prod\limits_{k=0}^{i-1} \phi_k(w){\widetilde{K}}_{i}(w)\otimes e(w).
\end{array}$$ In addition,  for $0 < j \leq n-1$, $$X_0\widetilde{S}_{0,j}\widetilde{K}_j(w)=\lambda(w)\widetilde{\psi}_{0,j}(w) \widetilde{K}_0(w) \otimes e(w),$$ and $$W_0S_{0,j}W^*_jX_j\widetilde{K}_j(w)=\lambda(w){\psi}_{0,j}(w) \widetilde{K}_0(w) \otimes e(w).$$ It now remains to prove that for $0 \leq i < j \leq n-1$, $\psi_{i,j}=\widetilde{\psi}_{i,j}.$ Note that $$\prod_{k=i}^{j-1}|\phi_k(w)|^2\frac{\|t_i(w)\|^2}{\|t_{j}(w)\|^2}=\frac{\| {\widetilde{K}}_i(w)\|^2}{\|{\widetilde{K}}_{j}(w)\|^2}$$
implies that $|\psi_{i,j}|=|\widetilde{\psi}_{i,j}|$. Since $\psi_{i,j}, \widetilde{\psi}_{i,j} \in \text{Hol}(\mathbb{D})$, we conclude that $\psi_{i,j}=\widetilde{\psi}_{i,j}$. This finishes the proof of the sufficiency. The proof of the necessity parallels that of Theorem \ref{mainsim}.

\end{proof}

\section{ Operator Theoretic Realization and Similarity}
The realization of Hermitian holomorphic bundles gives natural operations between Cowen-Douglas operators. A related question then is the following: Given a Hermitian holomorphic bundle $E$, when can
one find a Cowen-Douglas operator $T$ such that $\mathcal{E}_T=E?$ It is known that at least for ${E}=\mathcal{E}_{T_1}\otimes
\mathcal{E}_{T_2}$ with $T_1\in B_n(\Omega)$ and $T_2\in
B_m(\Omega)$, such a Cowen-Douglas operator $T$ exists. In
\cite{Lin},  Q. Lin proved the existence of a Cowen-Douglas operator
``$T_1*T_2$'' defined on the space $\bigvee \limits_{w \in \Omega}
\ker(T_1-w) \otimes \ker(T_2-w)$ such that
$\mathcal{E}_{T_1*T_2}=\mathcal{E}_{T_1}\otimes \mathcal{E}_{T_2}.$
However, for tensor products of holomorphic bundles in general, the
answer to this question is still unknown. For example,  we can consider the following question:

{\bf Question}\,\,  For any
Hermitian holomorphic bundle $\mathcal{E}$ with rank $m$ and a
Cowen-Douglas operator $T\in B_n(\Omega)$, does there exists an
operator $S$ such that $\mathcal{E}_S=\mathcal{E}_T\otimes \mathcal{E}$?

Note that the
problem is also related to the similarity of Cowen-Douglas
operators. According to the work initiated by the second author and
S. Treil, an operator model theorem plays a key role in the
similarity problem. If $T_1$ is a Cowen-Douglas operator of index
one, an operator $T$ similar to ${T^{n}_1}$ is assumed to have a
holomorphic bundle $\mathcal{E}_{T}$ with a tensor product
structure. When  $T_1$ is $M_z^*,$ the adjoint of the multiplication
operator on a weighted Bergman space, this kind of geometric structure
of the operator $T$ can be naturally obtained for $T$ that is an
$n$-hypercontraction.  In this case, $\mathcal{E}_{T}$ is unitarily
equivalent to $\mathcal{E}_{T_1}\otimes \mathcal{E}$ for some
holomorphic bundle $\mathcal{E}$. Since $T$ is similar to $T_1$,
this bundle $\mathcal{E}$ cannot have any Cowen-Douglas operator
theoretical realization. This means that $\mathcal{E}_{T}$ cannot be
equal to $\mathcal{E}_{T_1}\otimes \mathcal{E}_{T_2}$ for any
Cowen-Douglas operator $T_2$.  Now, when $T_1$ is a Cowen-Douglas
operator with index $n$, the problem of determining similarity does
not have a clear solution. To give a sufficient condition for the
similarity of irreducible Cowen-Douglas operators without an
operator model theorem, we need the following result on operator
theoretical realization. This theorem  also gives a positive answer to the above question in a special case.

Denote by $\text{Hol}(\Omega, \mathbb{C}^m)$ the space of all $\mathbb{C}^m$-valued holomorphic functions defined on a domain $\Omega$.  Let $T \in B_n(\Omega)$ be such that $T\sim_{u} (M^*_z, \mathcal{H}_{{K}})$, where ${K}(z, w)=({K}_{i,j}(z,w))_{m\times m}$ and $\mathcal{H}_{{K}}\subseteq
\text{Hol}(\Omega, \mathbb{C}^m)$.

\begin{thm}\label{Lin2}Let $e_i(w), 1\leq i\leq n,$ be n holomorphic functions on $\Omega$ and  let $$e(w):=(e_1(w),e_2(w),
\cdots, e_m(w))\in \mathbb{C}^{m}, \quad w\in \Omega.$$  If $\mathcal{E}$
is a line bundle with
$$\mathcal{E}(w)=\bigvee\limits_{w \in \Omega}\{e(w)\},$$ then for any operator $T\in B_n(\Omega)$,  there
exists an operator $S$ such that
$\mathcal{E}_{S}=\mathcal{E}_{T}\otimes \mathcal{E}.$

\end{thm}

\begin{proof} Let $\{\sigma_i\}_{i=1}^m$ be an orthonormal basis for $\mathbb{C}^m$.  Then for $w \in \Omega$, $$\text{ker} (T-w)=\bigvee _{1 \leq i \leq n} {K}(\cdot,\overline w)\sigma_i.$$ Now set $$\mathcal{M}:=\bigvee\limits_{w\in \Omega}\{{K}(\cdot,\overline w)\sigma_i\otimes e(w), 1 \leq i \leq n\}, $$ which is an invariant subspace of $T\otimes I_{m}$, and let $$S:=(T\otimes I_{m})|_{\mathcal{M}}.$$ We need only prove that for $w \in \Omega$, $$\text{ker}(S-w)=\bigvee_{1 \leq i \leq n} {K}(\cdot,\overline w)\sigma_i\otimes e(w)=(\mathcal{E}_{T}\otimes \mathcal{E})(w).$$
Note that for any ${K}(\cdot,\overline w)\sigma_i\otimes e(w) \in\mathcal{M}$, we have
\begin{equation*}
S({K}(\cdot,\overline w)\sigma_i\otimes e(w))=(T\otimes I_{m})({K}(\cdot,\overline w)\sigma_i\otimes e(w))
=T({K}(\cdot,\overline w)\sigma_i)\otimes e(w)
=w {K}(\cdot,\overline w)\sigma_i\otimes e(w),
\end{equation*}
and hence, $(\mathcal{E}_{T}\otimes \mathcal{E})(w)\subseteq\text{ker}(S-w)$ for  $w\in\Omega.$  For the converse, we first consider the following lemma:

\begin{lm}
The orthogonal complement $\mathcal{M}^{\perp}$ of $\mathcal{M}$ can be represented as $$\mathcal{M}^{\perp}=\left \{(x_1,x_2,\cdots,x_m)\in \bigoplus \limits_{i=1}^n \mathcal {H}_{{K}} : \sum\limits_{j=1}^m\overline{e_j(w)}x^i_j(\overline w)=0 \text{ for }1 \leq i \leq n\right \},$$ where $x_j=(x^1_j,x^2_j,\cdots,x^n_j)^T\in\text{Hol}(\Omega, \mathbb{C}^n)$.
\end{lm}

\begin{proof}

Note that for $w \in \Omega$, $${K}(\cdot,\overline w)\sigma_i\otimes e(w)=
\left ({K}(\cdot,\overline w)\sigma_i e_1(w), {K}(\cdot,\overline w)\sigma_i e_2(w), \cdots, {K}(\cdot,\overline w)\sigma_i e_m(w)\right).$$
It then follows that $\mathcal{M}\subseteq \bigoplus\limits^{n}_{i=1}{\mathcal H}_{K},$ and therefore for any $x=(x_1,x_2,\cdots,x_m)\in M^{\perp}$, $$x_j=(x^1_j,x^2_j,\cdots,x^n_j)^T\in \text{Hol}(\Omega, \mathbb{C}^n).$$  Moreover, we also have
\begin{equation*}\begin{array}{llll}
\bigg \langle x, {K}(\cdot,\overline w)\sigma_i \bigg \rangle &=& \bigg \langle (x_1,x_2,\cdots,x_m), \left ({K}(\cdot,\overline w)\sigma_i e_1(w),  \cdots, {K}(\cdot,\overline w)\sigma_i e_m(w) \right) \bigg \rangle\\
&=& \sum\limits_{j=1}^m \bigg \langle  \begin{pmatrix}x^1_j \\
\vdots \\ x^n_j \\
\end{pmatrix} , {K}(\cdot,\overline w)\sigma_i e_j(w)  \bigg \rangle \\
&=&\sum\limits_{j=1}^m\overline{e_j(w)}x^i_j(\overline w)\\
&=&0.
\end{array}
\end{equation*}
\end{proof}
For any  $t=(t_1,t_2,\cdots,t_m)\in \text{ker}(S-w)$, we have $t_i\in \text{ker}(T-w)$. Then there exist functions $\{\alpha^i_j\}_{i=1} ^n \subseteq \text{Hol}(\Omega)$ such that for $1 \leq i \leq n$, $$t_j=\sum\limits_{i=1}^n\alpha^i_j(w){K}(\cdot,\overline w)\sigma_i.$$ It follows that for any $x=(x_1,x_2,\cdots,x_m)\in \mathcal{M}^{\perp}$,
\begin{equation*}\begin{array}{llll}
\langle x, t\rangle &=&\bigg \langle (x_1,x_2,\cdots,x_m), (t_1,t_2,\cdots,t_m) \bigg \rangle\\[4pt]
&=&\bigg \langle (x_1,x_2,\cdots,x_m), \left (\sum\limits_{i=1}^n\alpha^i_1(w){K}(\cdot,\overline w)\sigma_1,\sum\limits_{i=1}^n\alpha^i_2(w){K}(\cdot,\overline w)\sigma_2,\cdots,\sum\limits_{i=1}^n\alpha^i_m(w){K}(\cdot,\overline w)\sigma_m \right) \bigg \rangle \\
&=&\sum\limits_{j=1}^m\sum\limits_{i=1}^n\overline{\alpha^i_j(w)}x_j^i(\overline w)\\
&=&0.
\end{array}
\end{equation*}

In particular, if one sets $x^j_1=x^j_2=\cdots=x^j_n=0,$ then for any $j\neq i$,
$$\sum\limits_{j=1}^m\overline{\alpha^i_j(w)}x^i_j(\overline w)=0.
$$
Recall from before that the $x_j^i$ also satisfy $\sum\limits_{j=1}^m\overline{e_j(w)}x^i_j(\overline w)=0.$
Hence for any $i_1$ and $i_2$, if one sets $x^{i_1}_j(\overline w)=-\overline{e_{i_2}(w)}$, $x^{i_2}_j(\overline w)=\overline{e_{i_1}(w)}$, and $x^i_j(\overline w)=0$ for $i$ different from $i_1$ and $i_2$, then $x\in \mathcal{M}^{\perp}$. Moreover,  $\overline{\alpha^{i_1}_j(w)e_{i_2}(w)}=\overline{\alpha^{i_2}_j(w)e_{i_1}(w)}$.  Without loss of generality, we assume that for all $w \in \Omega$ and $1 \leq i \leq m$, $e_i(w)\neq 0.$ Then for each $1 \leq i \leq n$, there exist $m$ holomorphic functions $$\frac{\alpha^i_1}{e_1}=\frac{\alpha^i_2}{e_2}=\cdots=\frac{\alpha^i_m}{e_m}$$ that are equal to one another.
Thus,
 \begin{equation*}\begin{array}{llll}
 (t_1,t_2,\cdots,t_m)&=&\left (\sum\limits_{i=1}^n\alpha_1^i(w){K}(\cdot,\overline w)\sigma_i, \sum\limits_{i=1}^n\alpha_2^i(w){K}(\cdot,\overline w)\sigma_i,\cdots, \sum\limits_{i=1}^n\alpha_m^i(w){K}(\cdot,\overline w)\sigma_i \right)\\
 &=&\sum \limits_{i=1}^n \left (\alpha^i_1(w){K}(\cdot,\overline w)\sigma_i,\cdots,\alpha^i_m(w){K}(\cdot,\overline w)\sigma_i\right )\\
 &=&\sum\limits_{i=1}^n{K}(\cdot,\overline w)\sigma_i\otimes (\alpha^i_1(w),\alpha^i_2(w),\cdots,\alpha^i_m(w))\\
 &=&\sum\limits_{i=1}^nk_i(w){K}(\cdot,\overline w)\sigma_i\otimes (e_1(w),e_2(w),\cdots,e_m(w))\\
  &=&\sum\limits_{i=1}^nk_i(w){K}(\cdot,\overline w)\sigma_i\otimes e(w),\\
 \end{array}
 \end{equation*}
 where $k_i:=\frac{\alpha^i_1}{e_1}.$
 This means that for $w \in \Omega$, $\text{ker}(S-w)\subseteq (\mathcal{E}_T\otimes \mathcal{E})(w)$ and the proof is complete.
 \end{proof}

Before moving onto the next theorem, we need a few more notations and lemmas.  Let $T \in B_n(\Omega)$ be an operator defined on $\mathcal{H}$ such that for $w \in \Omega$,
$\text{ker}(T-w)=\bigvee \limits_{i=1}^n e_i(w)$ for some holomorphic $e_i(w)$.
If we define an operator-valued function $\alpha : \Omega\rightarrow
{\mathcal L}(\mathbb{C}^n, {\mathcal H})$ as
$$\alpha(w)(w_1,w_2,\cdots,w_n):=\sum\limits_{i=1}^nw_i e_i(w),$$ then the Gram matrix $h$ is related to $\alpha$ by
$$
h(w) = \alpha(w)^* \alpha(w),
$$
for $w \in \Omega$. Then ${P}_{\text{ker}(T - w)}$, the projection from $\mathcal{H}$ onto $\ker(T-w)$, can be written as
$$
P_{\text{ker}(T - w)} = \alpha(w)h^{-1}(w) \alpha^*(w).
$$
When no confusion arises, we will also use the notation ${P}(w)$ to denote ${P}_{\text{ker} (T - w)}$. This projection formula first appeared in the work of R. Curto and N. Salinas in \cite{CS}. See also the references \cite{JI} and \cite{JS} for further generalization. In particular, we mention below the result due to the first author given in \cite{JI}. We first start with some relevant definitions and results.
\begin{Definition}For a unital $C^*$-algebra $\mathfrak{U}$, $p$ is called a projection (or an orthogonal projection) in $\mathfrak{U}$ whenever
 $p^2=p=p^*$. The set of all projections in $\mathfrak{U}$ is called the Grassmann manifold of ${\mathfrak{U}}$ and is denoted by ${\mathcal P}(\mathfrak{U})$. For a connected open set $\Omega\subset \mathbb{C}$, $P:\Omega \rightarrow\mathcal{P}(\mathfrak{U})$ is said to be a holomorphic curve on $\mathcal{P}({\mathfrak{U}})$ if it is a real-analytic $\mathfrak{U}$-valued map satisfying $\frac{\partial}{\partial \overline{w}}PP=0$.
 \end{Definition}

 \begin{lm}[\cite{MS1}] \label{MSL} For a holomorphic curve $P$ on $\mathcal{P}(\mathfrak{U})$, we have for all positive integers $I$ and $J$,
$$\frac{\partial^{J}}{\partial^{J} \overline{w}}PP=P\frac{\partial^{I}}{\partial^{I} w}P=0.$$

 \end{lm}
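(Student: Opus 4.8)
The plan is to argue directly from the three defining relations $P^{2}=P$, $P^{*}=P$, and $(\overline{\partial}P)P=0$. The first step is to extract a companion identity: differentiating $P=P^{2}$ with $\overline{\partial}$ and using the Leibniz rule gives $\overline{\partial}P=(\overline{\partial}P)P+P(\overline{\partial}P)$, and since $(\overline{\partial}P)P=0$ this forces $P(\overline{\partial}P)=\overline{\partial}P$; that is, multiplying $\overline{\partial}P$ on the left by $P$ has no effect. (Taking the adjoint of $(\overline{\partial}P)P=0$ and using $P^{*}=P$ with $(\overline{\partial}P)^{*}=\partial(P^{*})=\partial P$ already gives the case $I=1$ of the second identity, $P\,\partial P=0$, which is a useful sanity check.)

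I would then prove $\overline{\partial}^{J}P\,P=0$ for all $J\geq 1$ by induction on $J$, the base case $J=1$ being the hypothesis. Assuming $\overline{\partial}^{J}P\,P=0$, apply $\overline{\partial}$ and the Leibniz rule to get $\overline{\partial}^{J+1}P\,P=-\overline{\partial}^{J}P\,\overline{\partial}P$; substituting $\overline{\partial}P=P(\overline{\partial}P)$ from the first step turns the right-hand side into $-(\overline{\partial}^{J}P\,P)\,\overline{\partial}P$, which vanishes by the inductive hypothesis. This completes the induction and establishes $\overline{\partial}^{J}P\,P=0$ for every positive integer $J$.

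Finally, the statement $P\,\partial^{I}P=0$ is obtained by passing to adjoints. For any real-analytic operator-valued function $A$ one has $(\overline{\partial}A)^{*}=\partial(A^{*})$, immediate from a local power-series expansion in $z$ and $\overline{z}$, hence $(\overline{\partial}^{I}P)^{*}=\partial^{I}(P^{*})=\partial^{I}P$ using $P^{*}=P$. Taking adjoints of the relation $\overline{\partial}^{I}P\,P=0$ already proved then yields $0=(\overline{\partial}^{I}P\,P)^{*}=P^{*}(\overline{\partial}^{I}P)^{*}=P\,\partial^{I}P$, as claimed. The computation presents no genuine obstacle; the only points requiring care are applying the Leibniz rule correctly to the noncommutative products, performing every substitution of $\overline{\partial}P=P\,\overline{\partial}P$ on the correct (left) side, and stating explicitly the harmless interchange of $*$ with $\overline{\partial}$ that transfers the $J$-identity to the $I$-identity.
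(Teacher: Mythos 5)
The paper does not prove this lemma at all; it is quoted from Martin--Salinas \cite{MS1} and used as a black box, so there is no in-paper argument to compare against. Your proof is correct and complete: the extraction of $P\,\overline{\partial}P=\overline{\partial}P$ from $P=P^2$ together with $(\overline{\partial}P)P=0$, the induction $\overline{\partial}^{J+1}P\,P=-\overline{\partial}^{J}P\,\overline{\partial}P=-(\overline{\partial}^{J}P\,P)\,\overline{\partial}P=0$, and the passage to $P\,\partial^{I}P=0$ via $(\overline{\partial}^{I}P)^{*}=\partial^{I}(P^{*})=\partial^{I}P$ are exactly the standard route, and each Leibniz and adjoint step is applied on the correct side. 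This would serve as a self-contained substitute for the citation.
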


 \begin{Definition}\label{holomorphic}
Let $\Omega \subset \mathbb{C}$ be a connected open set and suppose
${\mathfrak{U}}$ is a unital $C^*$-algebra. Given a  holomorhic curve $P:\Omega\rightarrow
{\mathcal P(\mathfrak{U})}$, the curvature and the corresponding covariant derivatives of the holomorphic curve $P$, denoted $\mathcal{K}_{i,j}(P)$ for $i, j \geq 0$, are defined as
 \begin{align*}  {\mathcal K}(P):={\mathcal K}_{0,0}(P)=\frac{\partial}{\partial \overline{w}}P\frac{\partial}{\partial w} P,\\
 {\mathcal K}_{i+1,j}(P):=P\frac{\partial}{\partial w}({\mathcal K}_{i,j}(P)), \text{ and }\\
{\mathcal K}_{i,j+1}(P):=\frac{\partial}{\partial \overline{w}}({\mathcal
K}_{i,j}(P))P.\end{align*}
\end{Definition}

\begin{lm}[\cite{JI}] \label{Jlem} Let $P(w)=\alpha(w)\left (\alpha^{*}(w)\alpha(w)\right )^{-1}\alpha^{*}(w)$ be the projection onto $\text{ker}(T-w)$ defined above.
Then the curvature and its covariant derivatives ${\mathcal K}_{i,j}(P):\Omega\rightarrow {\mathcal {\mathcal L}({\mathcal H})}$ for $0 \leq i,j \leq n$, satisfy the identity
$${\mathcal K}_{i,j}(P)(w)=\alpha(w)(-{\mathcal{K}}_{T,z^i{\overline
 z}^j}(w))h^{-1}(w)\alpha^*(w),$$
 for all $w \in \Omega$.
\end{lm}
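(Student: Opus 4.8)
The plan is to prove the identity by induction on $i+j$, using the explicit formula $P(w)=\alpha(w)h^{-1}(w)\alpha^*(w)$ for the projection onto $\ker(T-w)$ together with the holomorphicity of $\alpha$ (so that $\overline{\partial}\alpha=0$ and $\partial\alpha^*=0$), the relation $\alpha^*\alpha=h$, and its differentiated forms $\alpha^*(\partial\alpha)=\partial h$ and $(\overline{\partial}\alpha^*)\alpha=\overline{\partial} h$ (here $h$ is invertible because $\{e_1,\dots,e_n\}$ is a holomorphic frame for $\mathcal{E}_T$). Writing $\Phi_{i,j}$ for the matrix of $\mathcal{K}_{T,z^i\overline{z}^j}$ with respect to this frame, the two covariant-derivative formulas for $\phi_{\overline{w}}(\sigma)$ and $\phi_w(\sigma)$ recorded in the introduction say precisely that $\Phi_{i,j+1}=\overline{\partial}\Phi_{i,j}$ and $\Phi_{i+1,j}=\partial\Phi_{i,j}+[h^{-1}\partial h,\Phi_{i,j}]$, and these recursions run in lockstep with the defining recursions $\mathcal{K}_{i,j+1}(P)=\overline{\partial}(\mathcal{K}_{i,j}(P))P$ and $\mathcal{K}_{i+1,j}(P)=P\,\partial(\mathcal{K}_{i,j}(P))$ for the holomorphic curve $P$. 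So it will be enough to settle the base case and then to show that each recursion preserves the asserted shape $\alpha(-\Phi_{i,j})h^{-1}\alpha^*$.

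For the base case I would first differentiate $P=\alpha h^{-1}\alpha^*$. Using $\partial h^{-1}=-h^{-1}(\partial h)h^{-1}$ and the analogous formula in $\overline{\partial}$, the products collapse to the clean expressions $\partial P=(I-P)(\partial\alpha)h^{-1}\alpha^*$ and $\overline{\partial} P=\alpha h^{-1}(\overline{\partial}\alpha^*)(I-P)$; in particular $P\,\partial P=0=\overline{\partial} P\,P$, which is Lemma~\ref{MSL} for $I=J=1$. Then $\mathcal{K}(P)=\overline{\partial} P\,\partial P=\alpha h^{-1}(\overline{\partial}\alpha^*)(I-P)(\partial\alpha)h^{-1}\alpha^*$, and after expanding $(I-P)(\partial\alpha)=\partial\alpha-\alpha h^{-1}(\partial h)$ and invoking $(\overline{\partial}\alpha^*)(\partial\alpha)=\overline{\partial}\partial h$ one obtains $\mathcal{K}(P)=\alpha h^{-1}\big(\overline{\partial}\partial h-(\overline{\partial} h)h^{-1}(\partial h)\big)h^{-1}\alpha^*$. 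Since $-\mathcal{K}_T=\overline{\partial}(h^{-1}\partial h)=h^{-1}\big(\overline{\partial}\partial h-(\overline{\partial} h)h^{-1}(\partial h)\big)$, the bracketed matrix equals $h(-\mathcal{K}_T)$, and hence $\mathcal{K}_{0,0}(P)=\alpha(-\mathcal{K}_T)h^{-1}\alpha^*$, which is the claim for $i=j=0$.

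For the inductive step I would assume $\mathcal{K}_{i,j}(P)=\alpha(-\Phi_{i,j})h^{-1}\alpha^*$ and compute the two one-step derivatives. Differentiating in $\overline{\partial}$ and then multiplying by $P=\alpha h^{-1}\alpha^*$ on the right: the term carrying $\overline{\partial}\Phi_{i,j}$ survives because $\alpha^*\alpha=h$, while the term coming from $\overline{\partial} h^{-1}$ cancels exactly against the term coming from $\overline{\partial}\alpha^*$ once one rewrites $(\overline{\partial}\alpha^*)\alpha=\overline{\partial} h$; what remains is $\overline{\partial}(\mathcal{K}_{i,j}(P))P=\alpha(-\overline{\partial}\Phi_{i,j})h^{-1}\alpha^*=\alpha(-\Phi_{i,j+1})h^{-1}\alpha^*$. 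Symmetrically, differentiating in $\partial$ and multiplying by $P$ on the left, and using $\alpha^*(\partial\alpha)=\partial h$, the three surviving terms reassemble as $P\,\partial(\mathcal{K}_{i,j}(P))=-\alpha\big(\partial\Phi_{i,j}+[h^{-1}\partial h,\Phi_{i,j}]\big)h^{-1}\alpha^*=\alpha(-\Phi_{i+1,j})h^{-1}\alpha^*$. This closes the induction.

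I expect the only genuine difficulty to be bookkeeping: one must track which factors are holomorphic and which anti-holomorphic so that the product rule produces only the surviving terms, and one must correctly identify the matrix form of the Chern-connection covariant derivatives on $\mathcal{E}_T$ with the $\partial$/$\overline{\partial}$-plus-conjugation-by-$h$ operations that appear after multiplying by $P$ on the appropriate side. One technical point worth flagging is that the mixed derivatives $\mathcal{K}_{T,z^i\overline{z}^j}$ are formed by a fixed sequence of $(1,0)$- and $(0,1)$-steps, and since $\mathcal{K}_{i,j}(P)$ is formed by the very same sequence of left/right multiplications by $P$, the induction transfers between the two verbatim; this is in essence the argument of \cite{JI}, and it runs parallel to the determinant-bundle computation of \cite{Dinesh}.
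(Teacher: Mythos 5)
The paper does not actually prove this lemma: it is quoted verbatim from the reference \cite{JI} and used as a black box, so there is no in-paper argument to compare yours against. Judged on its own, your proposal is correct and complete. I checked the key computations: with $\overline{\partial}\alpha=0$, $\partial\alpha^*=0$, $\alpha^*\alpha=h$, one indeed gets $\partial P=(I-P)(\partial\alpha)h^{-1}\alpha^*$ and $\overline{\partial}P=\alpha h^{-1}(\overline{\partial}\alpha^*)(I-P)$, whence $\mathcal{K}_{0,0}(P)=\alpha h^{-1}\bigl(\overline{\partial}\partial h-(\overline{\partial}h)h^{-1}\partial h\bigr)h^{-1}\alpha^*=\alpha(-\mathcal{K}_T)h^{-1}\alpha^*$, using $\overline{\partial}\partial h=(\overline{\partial}\alpha^*)(\partial\alpha)$; and in the inductive steps the term produced by $\overline{\partial}h^{-1}$ (resp.\ $\partial h^{-1}$) cancels against the term produced by $\overline{\partial}\alpha^*$ (resp.\ regroups with $\alpha^*\partial\alpha=\partial h$) exactly as you say, yielding $\alpha(-\overline{\partial}\Phi_{i,j})h^{-1}\alpha^*$ and $\alpha\bigl(-(\partial\Phi_{i,j}+[h^{-1}\partial h,\Phi_{i,j}])\bigr)h^{-1}\alpha^*$, which are $\alpha(-\Phi_{i,j+1})h^{-1}\alpha^*$ and $\alpha(-\Phi_{i+1,j})h^{-1}\alpha^*$ under the covariant-derivative formulas recorded in the introduction. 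Your closing caveat about matching the fixed sequence of $(1,0)$/$(0,1)$ steps in the definition of $\mathcal{K}_{T,z^i\overline{z}^j}$ with the left/right multiplications by $P$ is the right point to flag, and your argument handles any fixed ordering uniformly; this is essentially the computation one expects in \cite{JI}, in the spirit of the Curto--Salinas projection formula.
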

Based on these lemmas, we can prove the following result:

 \begin{thm}\label{cf} Let ${\mathcal E}_1$ and ${\mathcal E}_2$ be Hermitian holomorphic vector bundles over $\Omega$. Set ${\mathcal H}_i=\bigvee\limits_{w\in \Omega}{\mathcal{E}_i(w)}$. If the $P_i(w)$ denote the projection from ${\mathcal H}_i$ onto $\mathcal{E}_i(w)$, then
 $$ {\mathcal K}_{i,j}(P_1\otimes P_2)={\mathcal K}_{i,j}(P_1)\otimes P_2+P_1\otimes {\mathcal K}_{i,j}(P_2).$$
 \end{thm}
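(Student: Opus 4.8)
The plan is to prove the identity by induction on $i+j$, using nothing more than the Leibniz rule for $\partial$ and $\overline\partial$ acting on an elementary tensor $A\otimes B$ of operators, the multiplicativity $(A\otimes B)(C\otimes D)=AC\otimes BD$, and the two vanishing relations available for a holomorphic curve $P$ on $\mathcal P(\mathcal U)$: the defining relation $\overline\partial P\,P=0$ and the relation $P\,\partial P=0$ coming from Lemma \ref{MSL} with $I=J=1$.

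First I would record that $P_1\otimes P_2$ is itself a holomorphic curve on $\mathcal P(\mathcal L(\mathcal H_1\otimes\mathcal H_2))$, so that the quantities $\mathcal K_{i,j}(P_1\otimes P_2)$ are defined through Definition \ref{holomorphic}. Indeed, $(P_1\otimes P_2)^2=P_1^2\otimes P_2^2=P_1\otimes P_2$ and $(P_1\otimes P_2)^*=P_1\otimes P_2$, so $P_1\otimes P_2$ is the orthogonal projection of $\mathcal H_1\otimes\mathcal H_2$ onto $\mathcal E_1(w)\otimes\mathcal E_2(w)$; and expanding $\overline\partial(P_1\otimes P_2)=\overline\partial P_1\otimes P_2+P_1\otimes\overline\partial P_2$ and multiplying on the right by $P_1\otimes P_2$ gives $\overline\partial(P_1\otimes P_2)(P_1\otimes P_2)=\overline\partial P_1 P_1\otimes P_2+P_1\otimes\overline\partial P_2 P_2=0$.

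For the base case, expanding $\mathcal K_{0,0}(P_1\otimes P_2)=\overline\partial(P_1\otimes P_2)\,\partial(P_1\otimes P_2)$ by Leibniz produces four elementary tensors; two of them carry a factor $\overline\partial P_1 P_1$ or $P_1\partial P_1$ and hence vanish, leaving precisely $\overline\partial P_1\partial P_1\otimes P_2+P_1\otimes\overline\partial P_2\partial P_2=\mathcal K_{0,0}(P_1)\otimes P_2+P_1\otimes\mathcal K_{0,0}(P_2)$. For the inductive step, assume the claimed identity for $\mathcal K_{i,j}$. Writing $\mathcal K_{i+1,j}(P_1\otimes P_2)=(P_1\otimes P_2)\,\partial\big(\mathcal K_{i,j}(P_1)\otimes P_2+P_1\otimes\mathcal K_{i,j}(P_2)\big)$, applying Leibniz, and multiplying on the left by $P_1\otimes P_2$, the four resulting terms are $P_1\partial\mathcal K_{i,j}(P_1)\otimes P_2$, $P_1\mathcal K_{i,j}(P_1)\otimes P_2\partial P_2$, $P_1\partial P_1\otimes P_2\mathcal K_{i,j}(P_2)$, and $P_1\otimes P_2\partial\mathcal K_{i,j}(P_2)$; the middle two vanish because $P_2\partial P_2=0$ and $P_1\partial P_1=0$, and the remaining two are exactly $\mathcal K_{i+1,j}(P_1)\otimes P_2$ and $P_1\otimes\mathcal K_{i+1,j}(P_2)$ by Definition \ref{holomorphic}. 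The case $\mathcal K_{i,j+1}(P_1\otimes P_2)=\overline\partial\big(\mathcal K_{i,j}(P_1\otimes P_2)\big)(P_1\otimes P_2)$ is entirely symmetric, the two cross terms now being annihilated by $\overline\partial P_1 P_1=0$ and $\overline\partial P_2 P_2=0$.

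The only genuinely delicate point --- and the reason Lemma \ref{MSL} is indispensable --- is the bookkeeping that forces every cross term to vanish at each stage: the covariant derivatives $\mathcal K_{i,j}(P_\ell)$ are not projections and commute with nothing in sight, so one must check that in each cross term the factor $P_\ell\partial P_\ell$ (in the holomorphic differentiations, where $P_1\otimes P_2$ multiplies on the left) or $\overline\partial P_\ell\,P_\ell$ (in the antiholomorphic step, where it multiplies on the right) genuinely sits on the correct side of the relevant tensor factor before the relation is applied. Once this is verified for the generic term, the proof is a routine double induction.
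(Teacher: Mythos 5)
Your proposal is correct and follows essentially the same route as the paper: a double induction starting from $\mathcal K_{0,0}$, expanding by the Leibniz rule, and killing the cross terms via $\overline\partial P_\ell\,P_\ell=0$ and $P_\ell\,\partial P_\ell=0$ from Lemma \ref{MSL}, with Definition \ref{holomorphic} identifying the surviving terms. The only addition beyond the paper's argument is your explicit check that $P_1\otimes P_2$ is itself a holomorphic curve, which is a harmless (and welcome) extra detail rather than a different method.
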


\begin{proof}
 We prove by induction on $i$ and $j$ and consider the case  $i=j=0$ first.  Notice that
\begin{equation*}\begin{array}{llll}
{\mathcal K}(P_1\otimes P_2)&=&\frac{\partial}{\partial \overline{w}}(P_1\otimes P_2) \frac{\partial}{\partial w}(P_1\otimes P_2)\\[4pt]
&=& (\frac{\partial}{\partial \overline{w}}P_1 \otimes P_2 +P_1\otimes \frac{\partial}{\partial \overline{w}} P_2) (\frac{\partial}{\partial w}P_1 \otimes P_2 +P_1\otimes  \frac{\partial}{\partial w}P_2)\\[4pt]
&=& (\frac{\partial}{\partial \overline{w}}P_1\frac{\partial}{\partial w} P_1\otimes P_2+P_1\otimes \frac{\partial}{\partial \overline{w}} P_2\frac{\partial}{\partial w}P_2 +\frac{\partial}{\partial \overline{w}}P_1 P_1\otimes P_2 \frac{\partial}{\partial w}P_2+
P_1\frac{\partial}{\partial \overline{w}}P_1\otimes \frac{\partial}{\partial \overline{w}}P_2P_2).
\end{array}
\end{equation*}
By Lemma \ref{MSL}, $\frac{\partial}{\partial \overline{w}} P_1 P_1=\frac{\partial}{\partial \overline{w}}P_2P_2=0$ and hence, $${\mathcal K}(P_1\otimes P_2)=\frac{\partial}{\partial \overline{w}} P_1\frac{\partial}{\partial w}P_1\otimes P_2+P_1\otimes \frac{\partial}{\partial \overline{w}}P_2\frac{\partial}{\partial w}P_2={\mathcal K}(P_1)\otimes P_2+P_1\otimes {\mathcal K}(P_2).$$

Now assume that the conclusion holds for all $0 \leq i,j\leq k$, that is, $${\mathcal K}_{i,j}(P_1\otimes P_2)={\mathcal K}_{i,j}(P_1)\otimes P_2+P_1\otimes {\mathcal K}_{i,j}(P_2). $$ Then,
\begin{equation*}\begin{array}{lllll}
{\mathcal K}_{i+1,j}(P_1\otimes P_2)&=&(P_1\otimes P_2)\frac{\partial}{\partial w} ({\mathcal K}_{i,j}(P_1\otimes P_2))\\[4pt]
&=& (P_1\otimes P_2)\frac{\partial}{\partial w}({\mathcal K}_{i,j}(P_1)\otimes P_2+P_1\otimes {\mathcal K}_{i,j}(P_2))\\[4pt]
&=& P_1\frac{\partial}{\partial w} ({\mathcal K}_{i,j}(P_1))\otimes P_2+P_1{\mathcal K}_{i,j}(P_1)\otimes  P_2\frac{\partial}{\partial w}P_2 + P_1\frac{\partial}{\partial w} P_1\otimes P_2{\mathcal K}_{i,j}(P_2)+P_1\otimes P_2 \frac{\partial}{\partial w}( {\mathcal K}_{i,j}(P_2)).
\end{array}
\end{equation*}
Notice that since $P_2\frac{\partial}{\partial w}P_2=P_1\frac{\partial}{\partial w}P_1=0$, Definition \ref{holomorphic} gives
\begin{equation*}\begin{array}{llll}{\mathcal K}_{i+1,j}(P_1\otimes P_2)&=&P_1\frac{\partial}{\partial w}({\mathcal K}_{i,j}(P_1))\otimes P_2+P_1\otimes P_2 \frac{\partial}{\partial w}( {\mathcal K}_{i,j}(P_2))\\
&=&{\mathcal K}_{i+1,j}(P_1)\otimes P_2+P_1\otimes {\mathcal K}_{i+1,j}(P_2).
\end{array}
\end{equation*}
One shows in the same manner that $${\mathcal K}_{i,j+1}(P_1\otimes P_2)={\mathcal K}_{i,j+1}(P_1)\otimes P_2+P_1\otimes {\mathcal K}_{i,j+1}(P_2),$$
and therefore, the conclusion also holds in the case of $0  \leq i,j\leq k+1$.

\end{proof}

\begin{cor}\label{dcf} Let $\mathcal{E}_1$ and $\mathcal{E}_2$ be Hermitian holomorphic bundles over $\Omega$ of rank $n$ and $m$, respectively. For $i,j \geq 0$,
$$\mathcal{K}_{\mathcal{E}_1\otimes \mathcal{E}_2, z^i{\bar z}^j}=\mathcal{K}_{\mathcal{E}_1, z^i{\bar z}^j}\otimes I_m+I_n\otimes \mathcal{K}_{\mathcal{E}_2,z^i{\bar z}^j}.$$

\end{cor}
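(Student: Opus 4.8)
The plan is to deduce the identity directly from Theorem \ref{cf} by translating the operator-valued curvatures $\mathcal{K}_{i,j}(P)$ of a holomorphic curve of projections into the matrix-valued covariant derivatives $\mathcal{K}_{\mathcal{E},z^i\bar z^j}$ of the associated bundle via Lemma \ref{Jlem}. First I would fix holomorphic frames $\{s_1,\dots,s_n\}$ and $\{t_1,\dots,t_m\}$ for $\mathcal{E}_1$ and $\mathcal{E}_2$ on a common open set $U\subseteq\Omega$, set $\mathcal{H}_\ell=\bigvee_{w}\mathcal{E}_\ell(w)$, and let $\alpha_1(w)\in\mathcal{L}(\mathbb{C}^n,\mathcal{H}_1)$, $\alpha_2(w)\in\mathcal{L}(\mathbb{C}^m,\mathcal{H}_2)$ be the corresponding frame maps, with Gram matrices $h_1=\alpha_1^*\alpha_1$ and $h_2=\alpha_2^*\alpha_2$. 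Then $\{s_p\otimes t_q\}$ (with a fixed ordering identifying $\mathbb{C}^n\otimes\mathbb{C}^m=\mathbb{C}^{nm}$) is a holomorphic frame for $\mathcal{E}_1\otimes\mathcal{E}_2$ whose frame map is $\alpha_1\otimes\alpha_2$ and whose Gram matrix is $h_1\otimes h_2$, and $P_1(w)\otimes P_2(w)$ is exactly the projection of $\mathcal{H}_1\otimes\mathcal{H}_2$ onto $(\mathcal{E}_1\otimes\mathcal{E}_2)(w)$. Since the formula of Lemma \ref{Jlem} depends only on this frame realization, it applies verbatim to each of $P_1$, $P_2$, and $P_1\otimes P_2$.

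Next I would apply Lemma \ref{Jlem} three times. Writing $A_\ell:=-\mathcal{K}_{\mathcal{E}_\ell,z^i\bar z^j}$, it gives $\mathcal{K}_{i,j}(P_\ell)=\alpha_\ell A_\ell h_\ell^{-1}\alpha_\ell^*$ for $\ell=1,2$, and $\mathcal{K}_{i,j}(P_1\otimes P_2)=(\alpha_1\otimes\alpha_2)\bigl(-\mathcal{K}_{\mathcal{E}_1\otimes\mathcal{E}_2,z^i\bar z^j}\bigr)(h_1\otimes h_2)^{-1}(\alpha_1\otimes\alpha_2)^*$. Substituting these into the identity $\mathcal{K}_{i,j}(P_1\otimes P_2)=\mathcal{K}_{i,j}(P_1)\otimes P_2+P_1\otimes\mathcal{K}_{i,j}(P_2)$ of Theorem \ref{cf} and using the elementary tensor identities $(\alpha_1 X\alpha_1^*)\otimes(\alpha_2 Y\alpha_2^*)=(\alpha_1\otimes\alpha_2)(X\otimes Y)(\alpha_1\otimes\alpha_2)^*$ and $(h_1\otimes h_2)^{-1}=h_1^{-1}\otimes h_2^{-1}$, the right-hand side becomes $(\alpha_1\otimes\alpha_2)\bigl[(A_1 h_1^{-1})\otimes h_2^{-1}+h_1^{-1}\otimes(A_2 h_2^{-1})\bigr](\alpha_1\otimes\alpha_2)^*$.

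Finally I would cancel $\alpha_1\otimes\alpha_2$ on the left and $(\alpha_1\otimes\alpha_2)^*$ on the right. This is legitimate because each frame map is pointwise injective, hence $\alpha_1\otimes\alpha_2$ is injective and $(\alpha_1\otimes\alpha_2)^*(\alpha_1\otimes\alpha_2)=h_1\otimes h_2$ is invertible; applying $(\alpha_1\otimes\alpha_2)^*$ on the left and $\alpha_1\otimes\alpha_2$ on the right to both sides and then multiplying by $(h_1\otimes h_2)^{-1}$ removes them. Multiplying the resulting matrix equation on the right by $h_1\otimes h_2$ gives $-\mathcal{K}_{\mathcal{E}_1\otimes\mathcal{E}_2,z^i\bar z^j}=A_1\otimes I_m+I_n\otimes A_2=-\mathcal{K}_{\mathcal{E}_1,z^i\bar z^j}\otimes I_m-I_n\otimes\mathcal{K}_{\mathcal{E}_2,z^i\bar z^j}$, which is the asserted formula after a sign change.

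Honestly, once Theorem \ref{cf} and Lemma \ref{Jlem} are available the argument is routine, and the only genuine care needed is bookkeeping: verifying that the tensor-product frame $\{s_p\otimes t_q\}$ really carries frame map $\alpha_1\otimes\alpha_2$ and Gram matrix $h_1\otimes h_2$ under a consistent ordering, and justifying the cancellation of the (non-square) frame maps as above. As an alternative I could bypass both cited results and prove the base case $i=j=0$ by the direct computation $(h_1\otimes h_2)^{-1}\partial(h_1\otimes h_2)=(h_1^{-1}\partial h_1)\otimes I_m+I_n\otimes(h_2^{-1}\partial h_2)$ and then induct using the covariant-derivative formulas from the introduction; but the route above dispatches all $i,j\ge 0$ uniformly and is cleaner.
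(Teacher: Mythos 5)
Your proposal is correct and follows essentially the same route as the paper: both deduce the identity from Theorem \ref{cf} by realizing $P_1$, $P_2$, and $P_1\otimes P_2$ through frame maps and invoking Lemma \ref{Jlem} to pass from $\mathcal{K}_{i,j}(P)$ to the matrix-valued $\mathcal{K}_{\mathcal{E},z^i\bar z^j}$. The only cosmetic difference is that the paper reads off the matrix representation by evaluating on the tensor frame $\{e^1_s\otimes e^2_t\}$, whereas you cancel the injective frame map $\alpha_1\otimes\alpha_2$ algebraically; both steps are equivalent bookkeeping.
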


\begin{proof} Let $P_1(w)$ and $P_2(w)$ be the orthogonal projections onto ${\mathcal E}_1$ and ${\mathcal E}_2$, respectively.   By Theorem \ref{cf}, we have
$ {\mathcal K}_{i,j}(P_1\otimes P_2)={\mathcal K}_{i,j}(P_1)\otimes P_2+P_1\otimes {\mathcal K}_{i,j}(P_2).$  Suppose that $${\mathcal E}_1(w)=\bigvee \limits_{s=1}^n e^1_s(w) \text{ and }{\mathcal E}_2(w)=\bigvee \limits_{t=1}^m e^2_t(w).$$ Then  $P_i(w)=\alpha_i(w)(\alpha_i^{*}(w)\alpha_i(w))^{-1}\alpha_i^{*}(w),$ where $$\alpha_1(w)(w_1,w_2,\cdots,w_n)=\sum\limits_{s=1}^nw_s e^1_s(w),$$
and $$\alpha_2(w)(w_1,w_2,\cdots,w_m)=\sum\limits_{t=1}^mw_te^2_t(w), $$ for all $w \in \Omega$ and for some $w_s, w_t\in {\mathbb C}$.
Now let $\{\sigma_i\}^n_{i=1}$ be an orthonormal basis for $\mathbb{C}^n$. Then for any $e^1_s(w)\otimes e^2_t(w)\in {\mathcal E}_1(w)\otimes {\mathcal E}_2(w),$  we have
  \begin{equation*}\begin{array}{llll}
  ({\mathcal K}_{i,j}(P_1)(w)\otimes  P_2(w))(e^1_s(w)\otimes e^2_t(w))&=&{\mathcal K}_{i,j}(P_1)(w)e^1_s(w)\otimes  e^2_t(w)\\
  &=&\alpha_1(w)(-\mathcal{K}_{{\mathcal E}_1,z^i{\overline
 z}^j}(w))h_1^{-1}(w)\alpha_1^*(w)e^1_s(w)\otimes  e^2_t(w)\\
 &=&\alpha_1(w)(-\mathcal{K}_{{\mathcal E}_1,z^i{\overline
 z}^j}(w))h_1^{-1}(w)\alpha_1^*(w)\alpha_1(w)(\sigma_s)\otimes e^2_t(w)\\
 &=&\alpha_1(w)(-\mathcal{K}_{{\mathcal E}_1,z^i{\overline
 z}^j}(w))(\sigma_s)\otimes e^2_t(w).
   \end{array}
  \end{equation*}
Similarly, we also have $$(P_1(w)\otimes  {\mathcal K}_{i,j}(P_2)(w))(e^1_s(w)\otimes e^2_t(w))=e^1_s(w)\otimes \alpha_2(w)(-\mathcal{K}_{{\mathcal E}_1,z^i{\overline
 z}^j}(w))(\sigma_t).$$ When ${\mathcal K}_{i,j}(P_1\otimes P_2)$ is viewed as a bundle map on $\mathcal{E}_1\otimes \mathcal{E}_2$, the corresponding matrix representation under the basis
 $\{e^1_s\otimes e^2_t: 1 \leq s \leq n, 1 \leq t \leq m\}$ is $\mathcal{K}_{\mathcal{E}_1\otimes \mathcal{E}_2, z^i{\bar z}^j}$. From the calculation above, we see that it can also be represented as $\mathcal{K}_{\mathcal{E}_1, z^i{\bar z}^j}\otimes I_m+I_n\otimes \mathcal{K}_{\mathcal{E}_2,z^i{\bar z}^j}$ and this finishes the proof.
\end{proof}

\begin{cor}Let $\mathcal{E}_1$ and $\mathcal{E}_2$ be as in Corollary \ref{dcf}.
If $\mathcal{E}_2$ is a line bundle, then $$\text{trace}\mathcal{K}_{\mathcal{E}_1\otimes \mathcal{E}_2, z^i{\bar z}^j}-\text{trace}\mathcal{K}_{\mathcal{E}_1, z^i{\bar z}^j}=\mathcal{K}_{\mathcal{E}_2,z^i{\bar z}^j}.$$

\end{cor}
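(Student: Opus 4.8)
\emph{Proof plan.} The plan is to read the corollary off directly from the bundle-map identity of Corollary \ref{dcf} by specializing $\mathcal{E}_2$ to a line bundle and then passing to traces. The essential observation is that Corollary \ref{dcf} already supplies the identity for \emph{every} pair of covariant-derivative indices $i,j\geq 0$; in particular no analogue of Lemma \ref{Dinesh} (which records the determinant-bundle relation only for the curvature itself, i.e.\ the case $i=j=0$) is needed, and the whole argument is powered by Corollary \ref{dcf} alone.

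First I would specialize. Since $\mathcal{E}_2$ is a line bundle, its rank $m$ equals $1$, so the factor $I_m$ occurring in the first summand of Corollary \ref{dcf} is the scalar $1$, the block $\mathcal{K}_{\mathcal{E}_2, z^i{\bar z}^j}$ reduces to a scalar-valued function on $\Omega$, and the tensor bundle $\mathcal{E}_1\otimes \mathcal{E}_2$ again has rank $n$. The identity of Corollary \ref{dcf} then becomes the $n\times n$ matrix identity
\[
\mathcal{K}_{\mathcal{E}_1\otimes \mathcal{E}_2, z^i{\bar z}^j}=\mathcal{K}_{\mathcal{E}_1, z^i{\bar z}^j}+\mathcal{K}_{\mathcal{E}_2, z^i{\bar z}^j}\,I_n,
\]
in which the line-bundle term enters as a scalar multiple of the identity. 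Taking the trace of both sides and transposing $\text{trace }\mathcal{K}_{\mathcal{E}_1, z^i{\bar z}^j}$ to the left, I would arrive at
\[
\text{trace }\mathcal{K}_{\mathcal{E}_1\otimes \mathcal{E}_2, z^i{\bar z}^j}-\text{trace }\mathcal{K}_{\mathcal{E}_1, z^i{\bar z}^j}=\mathcal{K}_{\mathcal{E}_2, z^i{\bar z}^j},
\]
which is exactly the asserted formula.

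The step I expect to demand the most care is the justification that $\mathcal{K}_{\mathcal{E}_2, z^i{\bar z}^j}$ is genuinely a scalar function for each $i,j$, and hence that a single copy of it is what is to be compared against $\text{trace }\mathcal{K}_{\mathcal{E}_1\otimes \mathcal{E}_2, z^i{\bar z}^j}$. Here the line-bundle hypothesis is indispensable: for a rank-one bundle the Gram ``matrix'' is one-dimensional, so the commutator correction $[h^{-1}\tfrac{\partial}{\partial w}h,\,\cdot\,]$ built into the covariant derivative vanishes, and the covariant derivatives of the curvature coincide with the ordinary derivatives, $\mathcal{K}_{\mathcal{E}_2, z^i{\bar z}^j}=\partial^i{\bar\partial}^j\mathcal{K}_{\mathcal{E}_2}$, with $\mathcal{K}_{\mathcal{E}_2}$ computed from \eqref{linebundle}. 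Establishing this reduction cleanly is the only point that goes beyond a formal manipulation of Corollary \ref{dcf}; once it is in place, the trace computation above yields the stated identity directly.
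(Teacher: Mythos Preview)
Your approach is exactly what the paper intends: the corollary is stated immediately after Corollary~\ref{dcf} with no separate proof, so specializing to $m=1$ and taking traces is precisely the implied argument, and your remark that the line-bundle covariant derivatives reduce to ordinary derivatives (so that $\mathcal{K}_{\mathcal{E}_2,z^i\bar z^j}$ is a scalar) is the right justification.

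There is, however, a slip in the trace step. From the $n\times n$ identity you correctly derived,
\[
\mathcal{K}_{\mathcal{E}_1\otimes \mathcal{E}_2,\, z^i{\bar z}^j}=\mathcal{K}_{\mathcal{E}_1,\, z^i{\bar z}^j}+\mathcal{K}_{\mathcal{E}_2,\, z^i{\bar z}^j}\,I_n,
\]
taking traces gives $\text{trace}\bigl(\mathcal{K}_{\mathcal{E}_2, z^i{\bar z}^j}\,I_n\bigr)=n\,\mathcal{K}_{\mathcal{E}_2, z^i{\bar z}^j}$, not $\mathcal{K}_{\mathcal{E}_2, z^i{\bar z}^j}$. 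Thus the honest output of your argument is
\[
\text{trace }\mathcal{K}_{\mathcal{E}_1\otimes \mathcal{E}_2,\, z^i{\bar z}^j}-\text{trace }\mathcal{K}_{\mathcal{E}_1,\, z^i{\bar z}^j}=n\,\mathcal{K}_{\mathcal{E}_2,\, z^i{\bar z}^j},
\]
with $n=\rank\mathcal{E}_1$. (One can cross-check this for $i=j=0$ via Lemma~\ref{Dinesh}, since for a line bundle $\mathcal{E}_2$ one has $\det(\mathcal{E}_1\otimes\mathcal{E}_2)\cong(\det\mathcal{E}_1)\otimes\mathcal{E}_2^{\otimes n}$, whence $\mathcal{K}_{\det(\mathcal{E}_1\otimes\mathcal{E}_2)}=\mathcal{K}_{\det\mathcal{E}_1}+n\,\mathcal{K}_{\mathcal{E}_2}$.) The displayed corollary omits this factor of $n$, and your proposal silently drops it in passing from the matrix identity to the trace identity; that is the only gap in an otherwise correct derivation, and it points to a typo in the statement rather than a flaw in your method.
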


By using Theorem \ref{Lin2} and Corollary \ref{dcf}, we arrive at the following main theorem of the section:

\begin{thm}Let $T, S\in B_n(\Omega)$ and let $T\sim_{u} (M^*_z, \mathcal{H}_{K})$.  Suppose that there exist an isometry $V$ and functions $\{e_1, e_2, \cdots, $ $e_m\} \subseteq \text{Hol}(\Omega)$ such that for every $0 \leq i,j \leq n$,
$$V\mathcal{K}_{\mathcal{E}_S,z^i{\bar z}^j}V^*-\mathcal{K}_{\mathcal{E}_T, z^i{\bar z}^j}=\frac{\partial^{i+j+2}}{\partial^{i+1} w \partial^{j+1} \overline{w}}\psi\otimes I_{n}, $$
where $\psi$ is the function with the property that $$\exp \psi(w)=\sum\limits_{i=1}^m |e_i(w)|^2.$$
 Then there exists an $M^*_z\otimes I_m$-invariant subspace $\mathcal{M}$ of $\mathcal{H}_{{K}}\otimes \mathbb{C}^m$ such that
 $$S\sim_{u} (M^*_z\otimes I_m)|_{\mathcal{M}}.$$
Moreover, when $\psi$ is bounded on $\Omega$, $S$ is similar to $T$.

\end{thm}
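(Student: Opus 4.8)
The plan is to read the curvature hypothesis as the assertion that $S$ is unitarily equivalent to the tensor realization of $\mathcal{E}_T$ against the line bundle determined by $\psi$, and then to make that realization concrete via Theorem~\ref{Lin2}. Put $e(w)=(e_1(w),\dots,e_m(w))$ and let $\mathcal{E}$ be the Hermitian holomorphic line bundle over $\Omega$ with fibre $\mathcal{E}(w)=\mathbb{C}\,e(w)$; since the $e_i$ are holomorphic and have no common zero (because $\exp\psi(w)=\|e(w)\|^2>0$), this is a bona fide line bundle with Gram function $h_{\mathcal{E}}(w)=\exp\psi(w)$. Because $T\sim_u(M_z^*,\mathcal{H}_{K})$, Theorem~\ref{Lin2} then furnishes an $M_z^*\otimes I_m$-invariant subspace $\mathcal{M}=\bigvee_{w\in\Omega}\{K(\cdot,\overline{w})\sigma_i\otimes e(w)\}$ of $\mathcal{H}_{K}\otimes\mathbb{C}^m$ (in the notation of Theorem~\ref{Lin2}) together with the operator $S_0:=(M_z^*\otimes I_m)|_{\mathcal{M}}$, for which $\mathcal{E}_{S_0}=\mathcal{E}_T\otimes\mathcal{E}$. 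Everything then reduces to proving $S\sim_u S_0$, which also supplies the invariant subspace asked for in the first conclusion.

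For this I would compute the curvature of $\mathcal{E}_{S_0}=\mathcal{E}_T\otimes\mathcal{E}$ together with all of its covariant derivatives by means of Corollary~\ref{dcf}: since $\mathcal{E}$ has rank one, $\mathcal{K}_{\mathcal{E}_{S_0},\,z^i\overline{z}^j}=\mathcal{K}_{\mathcal{E}_T,\,z^i\overline{z}^j}+\mathcal{K}_{\mathcal{E},\,z^i\overline{z}^j}\,I_n$, and for a line bundle the covariant derivatives coincide with the ordinary ones, so that \eqref{linebundle} gives $\mathcal{K}_{\mathcal{E},\,z^i\overline{z}^j}=\partial^{i+1}\overline{\partial}^{j+1}\log h_{\mathcal{E}}=\partial^{i+1}\overline{\partial}^{j+1}\psi$ (with the sign normalization of \eqref{linebundle}). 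Thus the hypothesis $V\mathcal{K}_{\mathcal{E}_S,\,z^i\overline{z}^j}V^*-\mathcal{K}_{\mathcal{E}_T,\,z^i\overline{z}^j}=\partial^{i+1}\overline{\partial}^{j+1}\psi\otimes I_n$ says precisely that the isometry $V$ carries the curvature of $\mathcal{E}_S$ and all of its covariant derivatives onto those of $\mathcal{E}_{S_0}$, for $0\le i,j\le n$. Inserting this into the rigidity theorem of Cowen and Douglas \cite{CD} yields $S\sim_u S_0=(M_z^*\otimes I_m)|_{\mathcal{M}}$, which settles the first conclusion.

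Now assume in addition that $\psi$ is bounded on $\Omega$; since $S\sim_u S_0$ it suffices to prove $S_0\sim_s T$, that is, that the compression $(M_z^*\otimes I_m)|_{\mathcal{M}}$ is similar to $M_z^*$ on $\mathcal{H}_{K}$. The relevant point is that $\psi=\log\|e(\cdot)\|^2$ is automatically subharmonic (being the logarithm of a sum of squared moduli of holomorphic functions), so boundedness makes it a bounded subharmonic function; and the curvature identity of Corollary~\ref{dcf} gives $\|\partial P_{S_0}(w)\|^2_{HS}-\|\partial P_{T}(w)\|^2_{HS}=-n\,\mathcal{K}_{\mathcal{E}}(w)=n\,\partial\overline{\partial}\psi(w)=\overline{\partial}\partial(n\psi)(w)$. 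Feeding the boundedness of $\psi$ into the similarity criterion of Kwon and Treil and its weighted-Bergman extension \cite{Kwon1,Kwon2} then produces a bounded invertible $X$ with $XS_0=TX$, so $S_0\sim_s T$ and hence $S\sim_s T$.

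The step I expect to be the genuine obstacle is this last one. The obvious candidate for the intertwiner, namely the holomorphic bundle isomorphism $\mathcal{E}_T\otimes\mathcal{E}\to\mathcal{E}_T$ sending $\xi\otimes e(w)$ to $\xi$, which is bounded above and below fibrewise exactly when $\psi$ is bounded, does \emph{not} extend to a bounded operator between the Hilbert spaces (it amounts to an anti-holomorphic ``multiplier'', generically unbounded), so similarity cannot be read off from the pointwise comparability of the metrics. The intertwiner must instead be constructed by solving the associated $\overline{\partial}$-equation with a bounded solution, which is precisely where the boundedness of $\psi$ is used, following the method of \cite{Kwon1}. A secondary and purely clerical difficulty, already present in the second paragraph, is the bookkeeping of the sign conventions and of the holomorphic frames in which the matrix identities among the curvatures are to be understood.
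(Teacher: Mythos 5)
The paper never writes out a proof of this theorem: the text preceding it only says that it is obtained ``by using Theorem \ref{Lin2} and Corollary \ref{dcf}'', and the statement is followed immediately by the bibliography. Your argument for the first conclusion is exactly that indicated route, and it is sound: you form the line bundle $\mathcal{E}$ spanned by $e=(e_1,\dots,e_m)$ (non-vanishing since $\exp\psi>0$, with Gram function $\exp\psi$), use Theorem \ref{Lin2} to realize $S_0=(M_z^*\otimes I_m)|_{\mathcal{M}}$ with $\mathcal{E}_{S_0}=\mathcal{E}_T\otimes\mathcal{E}$, compute $\mathcal{K}_{\mathcal{E}_{S_0},z^i\bar z^j}=\mathcal{K}_{\mathcal{E}_T,z^i\bar z^j}+\partial^{i+1}\bar\partial^{j+1}\psi\,I_n$ (up to the paper's own sign ambiguity, which you correctly flag) from Corollary \ref{dcf} together with \eqref{linebundle}, and then conclude $S\sim_u S_0$ from the Cowen--Douglas unitary classification theorem quoted in the introduction, reading the hypothesis on $V$ as the required intertwining of curvatures and covariant derivatives up to the order needed for rank-$n$ bundles. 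This is surely the intended proof of the first assertion.

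The similarity assertion is where your write-up has a genuine gap, which you yourself half-acknowledge. You reduce correctly to showing $(M_z^*\otimes I_m)|_{\mathcal{M}}\sim_s T$ and note that the curvature discrepancy is $\bar\partial\partial\psi$ with $\psi$ bounded and subharmonic, but the results you then invoke, \cite{Kwon1} and \cite{Kwon2}, are proved only for contractions (respectively $n$-hypercontractions) compared against the Hardy shift (respectively the weighted Bergman shifts) on $\mathbb{D}$; here $T\sim_u(M_z^*,\mathcal{H}_K)$ for an arbitrary matrix-valued kernel on an arbitrary planar domain $\Omega$, and neither hypothesis is available, so the criterion cannot simply be ``fed'' the boundedness of $\psi$. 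Your closing paragraph concedes that the bounded intertwiner must instead be produced by solving a $\overline{\partial}$-problem ``following the method of \cite{Kwon1}'', but that method relies on concrete function-theoretic input for the disk and the specific model kernels (Green potential and Carleson-measure estimates), none of which is supplied in this generality, and you do not carry the construction out; your correct observation that the fibrewise map $\xi\otimes e(w)\mapsto\xi$ is bounded above and below pointwise yet need not extend to a bounded Hilbert-space operator is precisely the unresolved point. So the second conclusion remains unproved in your proposal; in fairness, the paper offers no argument for it either, and as stated it appears to require either additional hypotheses on $K$ and $\Omega$ or a genuinely new construction of the intertwiner.
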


\begin{bibdiv} \begin{biblist}

\bib{Agler}{article}{
   author={Agler, J.},
   title={The Arveson extension theorem and coanalytic models},
   journal={Integr. Equat. Op. Thy.},
   volume={5},
   date={1982},
   number={5},
   pages={608--631}
}

\bib{Agler2}{article}{
   author={ Agler, J.},
   title={Hypercontractions and subnormality},
   journal={J. Oper. Theory},
   volume={13},
   date={1985},
   number={2},
   pages={203-217}}

\bib{CFJ}{article}{
   author={Cao, Y.},
   author={Fang, J. S.},
   author={Jiang, C. L.},
   title={K-groups of Banach algebras and strongly irreducible decomposition of operators},
   journal={J. Oper. Theory},
   volume={48},
   date={2002},
   number={1},
   pages={235-253}
}

\bib{CD}{article}{
   author={Cowen, M. J.},
   author={Douglas, R. G.},
   title={Complex geometry and operator theory},
   journal={Acta. Math.},
   volume={141},
   date={1978},
   number={1},
   pages={187--261}
}

\bib{CS}{article}{
author={R. E. Curto}
author={N. Salinas}
title={Generalized Bergman kernels and the
Cowen-Douglas theory},
journal={Amer. J. Math. }
volume={106},
date={1984},
number={2},
pages={447--488.}}

\bib{Demailly}{article}{
author={J.-P. Demailly}
title={Complex Analytic and Differential Geometry},
journal={Open Content Book}
}

\bib{Kwon2}{article}{
   author={Douglas, R. G.},
   author={Kwon, H.},
   author={Treil, S.}
   title={Similarity of $n$-hypercontractions and backward Bergman shifts},
   journal={J. Lond. Math. Soc.},
   volume={88},
   date={2013},
   number={3},
   pages={637--648}

}

\bib{DM}{article}{
author={Douglas, R. G.}
author={Misra, G},
title={Equivalence of quotient Hilbert modules-II.},
journal={Trans. Amer. Math. Soc. },
volume={360},
date={2008},
number={4}
pages={2229--2264}
}

\bib{FV}{article}{
author={Franzoni, T.}
author={Vesentini, E.},
title={Holomorphic Maps and Invariant
  Distances},
journal={North-Holland Publishing Co., Amsterdam-New York},
date={1980}
}

\bib{HJK}{article}{
author={Hou, Y.}
 author={Ji, K.},
   author={Kwon, H.},
   title={The trace of the curvature determines similarity. },
   journal={Studia Math.},
   volume={236},
   date={2017},
   number={2},
   pages={193--200}
 }

 \bib{JI}{article}{
author={Ji, K.}
 title={Curvature formulas of holomorphic curves on $C^*$-algebras and Cowen-Douglas operators },
   journal={arXiv:1402.5476.}
  }

 \bib{JJ}{article}{
 author={Ji, K.}
 author={Jiang, C. L.}
 title={U+K similarity of Cowen-Douglas operators},
 journal={accepted to the Proceedings of IWOTA 2018 (Douglas Memorial Volume)},
 }

 \bib{JJK}{article}
   {author={Ji, K.}, author={Jiang, C. L.}, author={ Keshari, D. K.}

title={Geometric Similarity Invariants of Cowen-Douglas Operators},
journal={arXiv:1901.03993.}
 }

   \bib{JJKMCR}{article}
   {author={Ji, K.}, author={Jiang, C. L.}, author={ Keshari, D. K.}
author={Misra, G.},
title={Flag structure for operators in the Cowen-Douglas class},
journal={C. R. Math. Acad. Sci. Paris},
volume={352},
date={2014},
 pages={511-514.}
 }

\bib{JJKM}{article}{
author={Ji, K}, author={Jiang, C. L.}, author={ Keshari, D. K.}
author={Misra, G.},
 title={Rigidity of the flag structure for a class
of Cowen-Douglas operators},
journal={J. Funct. Anal.},
volume={272},
date={2017},
number={7},
pages={2899-2932}
}

\bib{JS}{article}{
author={Ji, K.}, author={Sarkar, J.},
 title={Similarity of quotient Hilbert modules in the Cowen-Douglas class}, journal={accepted to Eur. J. Math.}}

 \bib{JGJ}{article}{
author={Jiang, C. L.}, author={Guo, X. Z.}, author={Ji, K.},
title={$K$-group and similarity classification of operators.}
journal={J. Funct. Anal. }, volume={225}, date={2005},
   number={1},
   pages={167-192}
   }

\bib{JW}{article}
{ author={Jiang, C. L.}
author={Wang, Z.},
title={Strongly Irreducible Operators on Hilbert Space,}
journal={Pitman Research Notes in Mathematics Series, Longman, Harlow, Vol. 389, 1998.}
}

\bib{Dinesh}{article}
{author={Keshari, D. K.},
title={Trace formulae for curvature of jet bundles
over planar domains},
journal={ Complex Anal. Oper. Theory.},
volume={8},
 date={2014},
 number={8},
pages={1723--1740}
}

\bib{KM}{article}{
   author={Koranyi, A.}
   author={Misra, G.},
   title={Multiplicity-free homogeneous operators in the Cowen-Douglas class},
   journal={Perspectives in Mathematical Sciences II, World Scientific Series Press, Chap. 5},
   date={2009},
pages={83--101}}

\bib{Kwon1}{article}{
   author={Kwon, H.},
   author={Treil, S.},
   title={Similarity of operators and geometry of eigenvector bundles},
   journal={Publ. Mat.},
   volume={53},
   date={2009},
   number={2},
   pages={417-438}

}

\bib{Lin}{article}{
author={Lin, Q.}
title={Operator theoretical realization of some geometric notions},
   journal={Trans. Amer. Math. Soc.},
   volume={305},
   date={1988},
   number={1},
   pages={353-367}
}

\bib{MS1}{article}{
author={M. Martin}
author={N. Salinas}
title={Flag manifolds and the Cowen-Douglas theory},
   journal={J. Oper. Theory},
   volume={38},
   date={1997},
   number={2},
   pages={329-365}
}

\bib{Misra}{article}{
   author={Misra, G.},
   title={Curvature inequalities and extremal properties of
bundle shifts},
   journal={J. Oper. Theory},
   volume={11},
   date={1984},
   number={2},
   pages={305--317}

}

\end{biblist} \end{bibdiv}

\end{document}